\newtheorem{thm}{Theorem}
\newtheorem{prop}{Proposition}
\newtheorem{cor}{Corollary}
\newtheorem{defn}{Definition}
\def \S {\mathbf{S}}
\def \R {\mathbb{R}}
\def \v {\mathbf{v}}
\def \x {\mathbf{x}}
\def \x {\mathbf{x}}
\def \a {\mathbf{a}}
\def \b {\mathbf{b}}
\def \y {\mathbf{y}}
\def \u {\mathbf{u}}
\def \y {\mathbf{y}}
\def \x {\mathbf{x}}
\def \u {\mathbf{u}}
\def \R {\mathbb{R}}
\def \S {\mathcal{S}}
\def \v {\mathbf{v}}
\def \a {\mathbf{a}}
\def \b {\mathbf{b}}
\begin{document}

\title[Adaptive Accelerated Gradient Converging Methods]{Adaptive Accelerated Gradient Converging Methods \\under H\"{o}lderian Error Bound Condition}

 \coltauthor{\Name{Mingrui Liu}\Email{mingrui-liu@uiowa.edu}\\
\Name{Tianbao Yang} \Email{tianbao-yang@uiowa.edu}\\
   \addr Department of Computer Science \\
    The University of Iowa, Iowa City, IA 52242 
}

\maketitle
\vspace*{-0.5in}
\begin{center}{First version: November 22, 2016}\end{center}

\begin{abstract}
Recent studies have shown that proximal gradient (PG) method and accelerated gradient method (APG) with restarting can enjoy a linear convergence under a weaker condition than strong convexity, namely a quadratic growth condition (QGC). However, the faster convergence of restarting APG method relies on the potentially unknown constant  in QGC to appropriately restart APG, which restricts its applicability. We address this issue by developing a novel adaptive gradient converging methods, i.e., leveraging  the magnitude of proximal gradient as a criterion for restart and termination. Our analysis extends to a much more general condition beyond the QGC, namely the H\"{o}lderian error bound (HEB) condition.   
{\it The key technique} for our development is a novel synthesis of  {\it adaptive regularization and a conditional restarting scheme}, which extends previous work focusing on strongly convex problems to a much broader family of problems. Furthermore, we demonstrate that our results have important implication and applications in machine learning: (i) if the  objective function is coercive and semi-algebraic, PG's convergence speed is essentially $o(\frac{1}{t})$, where $t$ is the total number of iterations; (ii) if the objective function consists of an $\ell_1$, $\ell_\infty$, $\ell_{1,\infty}$, or huber  norm regularization and a convex smooth piecewise quadratic loss (e.g., squares loss, squared hinge loss and huber loss), 
the proposed algorithm is parameter-free and enjoys a {\it faster linear convergence} than PG without any other assumptions  (e.g., restricted eigen-value condition).   It is notable that  our linear convergence results for the aforementioned problems  are global instead of local.  To the best of our knowledge, these improved results are the first shown in this work.
\end{abstract}

\section{Introduction}

We consider the following smooth optimization problem:
\begin{align}\label{eqn:smt}
\min_{\x\in\R^d}f(\x),
\end{align}
where $f(\x)$ is a continuously differential convex function, whose gradient is $L$-Lipschitz continuous. More generally, we also tackle the following 
composite optimization:
\begin{align}\label{eqn:opt}
	\min_{\x\in\R^d} F(\x)\triangleq f(\x) + g(\x),
\end{align}
where $g(\x)$ is a proper lower semi-continuous convex function and $f(\x)$ is a continuously differentiable convex function, whose gradient is $L$-Lipschitz continuous. 
The above problem has been studied extensively in literature and many algorithms have been developed with convergence guarantee. In particular, by employing the proximal mapping associated with $g(\x)$, i.e., 
\begin{align}\label{eqn:prox}
	P_{\eta g}(\u) = \arg\min_{\x\in\R^d}\frac{1}{2}\|\x - \u\|_2^2+ \eta g(\x),
\end{align}
proximal gradient (PG) and accelerated proximal gradient (APG) methods have been developed for solving~(\ref{eqn:opt}) with $O(1/\epsilon)$ and $O(1/\sqrt{\epsilon})$~\footnote{For the moment, we neglect the constant factor.} iteration complexities for finding an $\epsilon$-optimal solution. 
When either $f(\x)$ or $g(\x)$ is strongly convex, both PG and APG can enjoy a linear convergence, i.e., the iteration complexity is improved to be $O(\log(1/\epsilon))$. 
\begin{table*}[t]
	\caption{Summary of iteration complexities  in this work under the HEB condition with $\theta\in(0,1/2]$, where $G(\x)$ denotes the proximal gradient, $\mathcal C(1/\epsilon^\alpha) = \max(1/\epsilon^\alpha, \log(1/\epsilon))$ and $\widetilde O(\cdot)$ suppresses a logarithmic term. If $\theta>1/2$, all algorithms can converge with finite steps of proximal mapping. rAPG stands for restarting APG. } 
	\centering
	\label{tab:data}
	\begin{tabular}{l|lll}
		\toprule
		algo.&PG &rAPG&adaAGC\\
		\midrule
		$F(\x) - F_*\leq \epsilon$ &$O\left(c^2L\mathcal C\left(\frac{1}{\epsilon^{1-2\theta}}\right)\right)$&$O\left(c\sqrt{L}\mathcal C\left(\frac{1}{\epsilon^{1/2-\theta}}\right)\right)$&--\\ 
		\midrule
		$\|G(\x)\|_2\leq \epsilon$ & $O\left(c^{\frac{1}{1-\theta}}L\mathcal C\left(\frac{1}{\epsilon^{\frac{1-2\theta}{1-\theta}}}\right)\right)$&--&$\widetilde O\left(c^{\frac{1}{2(1-\theta)}}\sqrt{L}\mathcal C\left(\frac{1}{\epsilon^{\frac{1-2\theta}{2(1-\theta)}}}\right)\right)$\\
		\midrule
		requires $\theta$&No&Yes&Yes\\
		\midrule
		requires $c$&No&Yes&No\\
		\bottomrule
	\end{tabular}
\end{table*}

Recently, a wave of study is to generalize the linear convergence to problems without strong convexity but under certain structured condition of the objective function or more generally a quadratic growth condition~\citep{DBLP:conf/nips/HouZSL13,DBLP:conf/icml/ZhouZS15,DBLP:journals/corr/So13,DBLP:journals/jmlr/WangL14,DBLP:journals/corr/GongY14,DBLP:journals/corr/ZhouS15a,arxiv:1510.08234,DBLP:journals/corr/nesterov16linearnon,DBLP:conf/pkdd/KarimiNS16,DBLP:journals/corr/abs/1606.00269,Drusvyatskiy16a}. Earlier work along the line dates back to~\citep{Luo:1992a,Luo:1992b,Luo:1993}. An example of the structured condition is such that $f(\x) = h(A\x)$ where $h(\cdot)$ is strongly convex function and $\nabla h(\x)$ is Lipschitz continuous on any compact set, and $g(\x)$ is a polyhedral function. Under such a structured condition, a local error bound condition can be established~\citep{Luo:1992a,Luo:1992b,Luo:1993}, which renders an asymptotic (local) linear convergence for the proximal gradient method. A quadratic growth condition (QGC) prescribes that  the objective function satisfies for any $\x\in\R^d$~\footnote{It can be relaxed to a fixed domain as done in this work.}:
$\frac{\alpha}{2}\|\x - \x_*\|_2^2 \leq F(\x) - F(\x_*)$,
where $\x_*$ denotes a closest point to $\x$ in the optimal set. Under such a quadratic growth condition, several recent studies have established the linear convergence of PG, APG and many other algorithms (e.g., coordinate descent methods)~\citep{arxiv:1510.08234,DBLP:journals/corr/nesterov16linearnon,Drusvyatskiy16a,DBLP:conf/pkdd/KarimiNS16,DBLP:journals/corr/abs/1606.00269}. A notable result is that PG enjoys an iteration complexity of $O(\frac{L}{\alpha}\log(1/\epsilon))$ without knowing  the value  of $\alpha$, while a restarting version of APG studied in~\cite{DBLP:journals/corr/nesterov16linearnon} enjoys an improved iteration complexity of $O(\sqrt{\frac{L}{\alpha}}\log(1/\epsilon))$ hinging  on the value of $\alpha$ to appropriately restart APG periodically. Other equivalent conditions or more restricted conditions are also considered in several studies to show the linear convergence of (proximal) gradient method and other methods~\citep{DBLP:conf/pkdd/KarimiNS16,DBLP:journals/corr/nesterov16linearnon,DBLP:journals/corr/abs/1606.00269,Zhang2016}. 

In this paper, we extend this line of work to a more general error bound condition, i.e., the H\"{o}lderian error bound (HEB) condition on a compact sublevel set $\mathcal S_\xi=\{\x\in\R^d: F(\x) - F(\x_*)\leq \xi\}$: there exists $\theta\in(0,1]$ and $0<c< \infty$ such that 
\begin{align}\label{eqn:HEB}
	&\|\x - \x_*\|_2\leq c(F(\x) - F(\x_*))^{\theta},\:\forall\x \in\mathcal S_\xi. 
\end{align}
Note that when $\theta=1/2$ and $c = \sqrt{1/\alpha}$, the HEB reduces to the QGC. In the sequel, we will refer to $C = Lc^2$ as condition number of the problem. It is worth mentioning that \citet{arxiv:1510.08234} considered the same condition or an equivalent  Kurdyka - \L ojasiewicz inequality but they only focused on descent methods that bear a sufficient decrease condition for each update consequentially  excluding APG. In addition, they do not provide explicit iteration complexity under the general HEB condition.

As a warm-up and motivation, we will first present a straightforward analysis to show that PG is automatically adaptive and APG can be made adaptive to the HEB by restarting.  In particular if $F(\x)$ satisfies a HEB condition on the initial sublevel set, PG has an iteration complexity of $O(\max(\frac{C}{\epsilon^{1-2\theta}}, C\log(\frac{1}{\epsilon})))$~\footnote{When $\theta>1/2$, all algorithms can converge in finite steps.}, and restarting APG enjoys an iteration complexity of $O(\max(\frac{\sqrt{C}}{\epsilon^{1/2-\theta}}, \sqrt{C}\log(\frac{1}{\epsilon})))$ for the convergence of objective value, where $C = Lc^2$ is the condition number. These two results resemble but generalize recent works that establish linear convergence of PG and restarting APG under the QGC - a special case of HEB.  Although enjoying faster convergence, restarting APG has some caveats: (i) it requires the knowledge of  constant $c$  in HEB to restart APG, which is usually difficult to compute or estimate; (ii) there lacks an appropriate machinery to terminate the algorithm. 
In this paper, we make nontrivial contributions to obtain faster convergence of the proximal gradient's norm under the HEB condition by developing an adaptive accelerated gradient converging method.  

The main results of this paper are summarized in Table 1. In summary the contributions of this paper are:
\begin{itemize}
	\item We extend the analysis of PG and restarting APG under the quadratic growth condition to more general HEB condition, and establish the adaptive iteration complexities of both algorithms. 
\item To enjoy faster convergence of restarting APG and to eliminate the algorithmic dependence on the unknown parameter $c$, we propose and analyze an adaptive accelerated gradient converging (adaAGC) method. 
\end{itemize}
{The developed algorithms and theory have important implication and applications in machine learning.} Firstly, if the considered objective function is also coercive and semi-algebraic  (e.g., a norm regularized problem in machine learning with a semi-algebraic loss function), then PG's convergence speed is essentially $o(1/t)$ instead of $O(1/t)$, where $t$ is the total number of iterations.   Secondly, for solving $\ell_1$, $\ell_\infty$ or $\ell_{1,\infty}$ regularized smooth loss minimization problems  including least-squares loss, squared hinge loss and huber loss, the proposed adaAGC method enjoys a linear convergence and a square root dependence on the ``condition" number. In contrast to previous work, the proposed algorithm is parameter free and does not rely on any restricted  conditions (e.g., the restricted eigen-value conditions).  

\section{Related Work}
At first, we review some related work for solving the problem (\ref{eqn:smt}) and (\ref{eqn:opt}). In Nesterov's seminal work \citep{nesterov1983method,RePEc:cor:louvco:2007076}, the accelerated (proximal) gradient (APG) method were proposed for (composite) smooth optimization problems, enjoying $O(1/\sqrt{\epsilon})$ iteration complexity for achieving a $\epsilon$-optimal solution. When the objective is also strongly convex, APG can converge to the optimal solution linearly with an appropiate step size depending on the strong convexity modulus, which enjoys $O(\log{(1/\epsilon)})$ iteration complexity.

To address the issue of unknown strong convexity modulus for some problems, several restarting schemes were developed. \citet{RePEc:cor:louvco:2007076} proposed a restarting scheme for the APG method to approximate the unknown strongly convexity parameter and achieved a linear convergence rate. \citet{DBLP:conf/icml/LinX14} proposed an adaptive APG method which employs the restart and line search technique to automatically estimate the strong convexity parameter.
\citet{o2015adaptive} proposed an heuristic approach to adaptively restart accelerated gradient schemes and showed good experimental results. Nevertheless,  they provide no theoretical guarantee of their proposed heuristic approach. In contrast to these work, we do not assume any strong convexity or restricted strong convexity for sparse learning.  It was brought to our attention that  a recent work~\citep{arxiv:1609.07358} considered QGC and proposed restarted accelerated gradient and coordinate descent methods, including APG, FISTA and the accelerated proximal coordinate descent method (APPROX). The difference from their restarting scheme for APG and the restarting schemes in~\citep{RePEc:cor:louvco:2007076, DBLP:conf/icml/LinX14, o2015adaptive} and the present work is that their restart doest not involve evaluation of the gradient or the objective value but rather depends on a restarting frequency parameter and a convex combination parameter for computing the restarting solution, which can be set based on a rough estimate of the strong convexity parameter. As a result, their linear convergence (established for distance of solutions to the optimal set) heavily depends on the rough estimate of the strong convexity parameter. 

Leveraging error bound conditions dates back to~\citep{Luo:1992a,Luo:1992b,Luo:1993}, which  employed the error bound condition to establish the asymptotic (local) linear convergence for feasible descent methods. 
Luo \& Tseng' bounds the distance of a local solution to the optmal  set by the norm of  proximal gradient. 
 Several recent work~\citep{DBLP:conf/nips/HouZSL13,DBLP:conf/icml/ZhouZS15,DBLP:journals/corr/So13} have considered Luo \& Pseng's error bound condition for more problems in machine learning  
and established local linear convergence for proximal gradient methods. \citet{wang2014iteration} established a global error bound version of Luo \& Pseng's condition for a family of problems in machine learning (e.g., the dual formulation of SVM), 
and provided the global linear convergence for a series of algorithms, including cyclic coordinate descent methods for solving dual support vector machine. 
Note that the H\"{o}lderian error bound~\citep{arxiv:1510.08234} used in our analysis is different from Luo \& Pseng's condition, and is actually more general.   
\citet{arxiv:1510.08234} established the equivalence of HEB and Kurdyka-\L ojasiewicz (KL) inequality and showed how to derive lower computational complexity via employing KL inequality. As a special case of H\"{o}lderian error bound condition, quadratic growth condition (QGC) has been considered in several recent work for deriving linear convergence. \citet{DBLP:journals/corr/GongY14} established linear convergence of proximal variance-reduced gradient (Prox-SVRG) algorithm  under QGC. \citet{DBLP:journals/corr/nesterov16linearnon} showed that QGC is one of the relaxations of strong convexity conditions, which can still guarantee the linear convergence for several first order methods, including projected gradient, fast gradient and feasible descent methods. \citet{Drusvyatskiy16a} also showed that proximal gradient algorithm achieved the linear convergence under QGC. There also exist other  conditions  (stronger than or equivalent to QGC)  that can help achieve linear convergence rate. For example, \citet{DBLP:conf/pkdd/KarimiNS16} showed that the Polyak-\L ojasiewicz (PL) inequality suffices to guarantee a global linear convergence for (proximal) gradient descent methods. \citet{DBLP:journals/corr/abs/1606.00269} summarized different sufficient conditions which are capable of deriving linear convergence, and discussed their relationships.



\section{Notations and Preliminaries}
In this section, we present some notations and preliminaries. In the sequel, we let $\|\cdot\|_p$ ($p\geq 1$) denote the $p$-norm of a vector. A function $g(\x): \R^d\rightarrow ]-\infty, \infty] $ is a proper function if $g(\x)<+\infty$ for at least one $\x$ and $g(\x)>-\infty$ for all $\x$. $g(\x)$ is lower semi-continuous at a point $\x_0$ if $\lim\inf_{\x\rightarrow \x_0}g(\x) = g(\x_0)$. A function $F(\x)$ is coercive if and only if $F(\x)\rightarrow \infty$ as $\|\x\|_2\rightarrow\infty$.  

A subset $S\subset\R^d$ is a real semi-algebraic set if there exists a finite number of real polynomial functions $g_{ij}, h_{ij}:\R^d\rightarrow\R$ such that 
\[
S = \cup_{j=1}^p \cap_{i=1}^q\{\u\in\R^d; g_{ij}(\u)=0 \text{ and } h_{ij}(\u)\leq 0\}. 
\] 
A function $F(\x)$ is semi-algebraic if its graph $\{(\u, s)\in\R^{d+1}: F(\u)=s\}$ is a semi-algebraic set. 

Denote by $\mathbb N$ the set of all positive integers. A function $h(\x)$ is a real polynomial  if there exists $r\in\mathbb N$ such that $h(\x) = \sum_{0\leq |\alpha|\leq r}\lambda_\alpha \x^\alpha$, where $\lambda_\alpha\in\R$ and $\x^\alpha =x_1^{\alpha_1}\ldots x_d^{\alpha_d}$, $\alpha_j\in\mathbb N\cup \{0\}$, $|\alpha| =\sum_{j=1}^d\alpha_j$ and $r$ is referred to as the degree of $h(\x)$. A continuous function $f(\x)$ is said to be a piecewise convex polynomial if there exist finitely many polyhedra $P_1,\ldots, P_k$ with $\cup_{j=1}^kP_j = \R^n$ such that the restriction of $f$ on each $P_j$ is a convex polynomial. Let $f_j$ be the restriction of $f$ on $P_j$. The degree of a piecewise convex polynomial function $f$ denoted by $deg(f)$ is the maximum of the degree of each $f_j$. If $deg(f)=2$, the function is referred to as a piecewise convex quadratic function. Note that a piecewise convex polynomial function is not necessarily a convex function~\citep{DBLP:journals/mp/Li13}.

A function $f(\x)$ is $L$-smooth w.r.t $\|\cdot\|_2$ if it is differentiable and has a Lipschitz continuous gradient with the Lipschitz constant $L$, i.e.,  $\|\nabla f(\x) - \nabla f(\y)\|_2 \leq L\|\x - \y\|_2, \forall \x, \y$.
Let $\partial g(\x)$ denote the subdifferential of $g$ at $\x$, i.e., 
\begin{equation*}
	\partial g(\x) = \{\u\in\R^d: g(\y)\geq g(\x) + \u^{\top}(\y - \x), \forall\y\}.
\end{equation*}
Denote by $\|\partial g(\x)\|_2 = \min_{\u\in\partial g(\x)}\|\u\|_2$.  
A function $g(\x)$ is $\alpha$-strongly convex w.r.t $\|\cdot\|_2$ if it satisfies for any $\u\in\partial g(\y)$ such that
$g(\x)\geq g(\y) + \u^{\top}(\x - \y) + \frac{\alpha}{2}\|\x - \y\|_2^2, \forall \x, \y$. 

Denote by $\eta>0$ a positive scalar, and  let $P_{\eta g}$ be the proximal mapping associated with $\eta g(\cdot)$ defined in~(\ref{eqn:prox}). 
Given an objective function $F(\x) = f(\x) + g(\x)$, where $f(\x)$ is $L$-smooth and $g(\x)$ is a simple non-smooth function, define a {\bf proximal gradient} $G_\eta(\x)$ as: 
\begin{align*}
	G_\eta(\x) = \frac{1}{\eta}(\x - \x^+_\eta), \text{ where }\x^+_\eta = P_{\eta g}(\x - \eta\nabla f(\x))
\end{align*}
When $g(\x)=0$, we have $G_\eta(\x) = \nabla f(\x)$, i.e., the proximal gradient is the gradient. It is known that  $\x$ is an optimal solution iff $G_\eta(\x)=0$. If $\eta = 1/L$, for simplicity we denote by $G(\x) = G_{1/L}(\x)$ and $\x^+ = P_{g/L}(\x - \nabla f(\x)/L)$.  Below, we give several technical propositions related to $G_\eta(\x)$ and the proximal gradient update.
 \begin{prop}\citep{RePEc:cor:louvco:2007076}
 	\label{decreaseNesterov}
 Given $\x$, $\|G_\eta(\x)\|_2$ is a monotonically decreasing function of $\eta$. 
  \end{prop}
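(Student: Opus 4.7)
The plan is to compare the proximal gradient at two step sizes $\eta_1 < \eta_2$ by exploiting the monotonicity of the subdifferential $\partial g$. Write $\x_i = \x^+_{\eta_i} = P_{\eta_i g}(\x - \eta_i \nabla f(\x))$ and $\g_i = G_{\eta_i}(\x) = (\x - \x_i)/\eta_i$. The first step is to derive the optimality conditions for $\x_i$ as the minimizer of $\frac{1}{2}\|\y - \x + \eta_i \nabla f(\x)\|_2^2 + \eta_i g(\y)$, which yield
\begin{equation*}
\g_i - \nabla f(\x) \in \partial g(\x_i), \qquad i=1,2.
\end{equation*}

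Next I would apply monotonicity of $\partial g$ to the two inclusions, giving $(\g_1 - \g_2)^\top (\x_1 - \x_2) \geq 0$. Substituting $\x_i = \x - \eta_i \g_i$, this becomes $(\g_1 - \g_2)^\top(\eta_2 \g_2 - \eta_1 \g_1) \geq 0$, which after expansion is
\begin{equation*}
(\eta_1 + \eta_2)\,\g_1^\top \g_2 \;\geq\; \eta_1 \|\g_1\|_2^2 + \eta_2 \|\g_2\|_2^2.
\end{equation*}

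The key algebraic step is to bound the left-hand side by Cauchy--Schwarz, $\g_1^\top \g_2 \leq \|\g_1\|_2 \|\g_2\|_2$, and then factor the resulting quadratic inequality in $\|\g_1\|_2,\|\g_2\|_2$ as
\begin{equation*}
(\|\g_1\|_2 - \|\g_2\|_2)\bigl(\eta_1 \|\g_1\|_2 - \eta_2 \|\g_2\|_2\bigr) \leq 0.
\end{equation*}
Since $\eta_1 < \eta_2$, the possibility $\|\g_1\|_2 < \|\g_2\|_2$ combined with $\eta_1 \|\g_1\|_2 > \eta_2 \|\g_2\|_2$ is incompatible, so we must have $\|\g_1\|_2 \geq \|\g_2\|_2$, which is the claimed monotonicity.

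I expect the only delicate part to be the final case analysis of the two factors: one has to argue cleanly (including the boundary case when one of the proximal gradients vanishes, which forces $\x$ to be optimal and hence both gradients to be zero) that the sign pattern $\|\g_1\|_2 < \|\g_2\|_2$ is ruled out. Everything else — the optimality conditions, the subdifferential monotonicity, and the substitution — is essentially bookkeeping.
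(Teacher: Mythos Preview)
The paper does not supply its own proof of this proposition; it is quoted from \citet{RePEc:cor:louvco:2007076} without argument, so there is nothing to compare against directly.

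Your argument is correct and is in fact the standard route: derive $\g_i-\nabla f(\x)\in\partial g(\x_i)$ from the proximal optimality conditions, use monotonicity of $\partial g$, substitute $\x_i=\x-\eta_i\g_i$, and arrive at
\[
(\|\g_1\|_2-\|\g_2\|_2)(\eta_1\|\g_1\|_2-\eta_2\|\g_2\|_2)\leq 0.
\]
One small comment on the final case analysis: you do not need to treat the boundary case $\g_1=0$ separately. If $\|\g_1\|_2<\|\g_2\|_2$, then $\|\g_2\|_2>0$ and $\eta_1\|\g_1\|_2\leq \eta_2\|\g_1\|_2<\eta_2\|\g_2\|_2$, so both factors are strictly negative and the product is strictly positive --- a contradiction. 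This already covers $\g_1=0$. (Alternatively, plugging $a=0$ into the unfactored inequality $(\eta_1+\eta_2)ab\geq \eta_1 a^2+\eta_2 b^2$ gives $0\geq \eta_2 b^2$, forcing $b=0$.) So the ``delicate'' part you flagged is actually routine.
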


 \begin{prop}\citep{Beck:2009:FIS:1658360.1658364}\label{lem:2}
 Let $F(\x) = f(\x)  + g(\x)$. Assume $f(\x)$ is $L$-smooth. For any $\x, \y$ and $\eta \leq 1/L$, we have
\begin{align}\label{eqn:key}
F(\y^+_\eta)\leq F(\x)  + G_\eta(\y)^{\top}(\y - \x)  - \frac{\eta}{2}\|G_\eta(\y)\|_2^2.
\end{align}
 \end{prop}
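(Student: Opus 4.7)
The plan is to prove this via the standard ``three–point'' argument for one proximal gradient step, combining the descent lemma for the smooth part $f$, the optimality (subgradient) condition for the proximal mapping that defines $\y^+_\eta$, and the convexity of both $f$ and $g$ evaluated at the reference point $\x$. The right–hand side is linear in $\x$, so essentially we want to upper bound $F(\y^+_\eta)$ by a quantity involving only $f(\x)$, $g(\x)$, the step from $\y$ to $\y^+_\eta$, and an inner product against $\y-\x$; this is exactly the shape the descent-plus-convexity combination produces.

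First I would apply $L$-smoothness of $f$ to the pair $(\y,\y^+_\eta)$ to get $f(\y^+_\eta)\le f(\y)+\nabla f(\y)^\top(\y^+_\eta-\y)+\tfrac{L}{2}\|\y^+_\eta-\y\|_2^2$, and then use $\eta\le 1/L$ to replace $L/2$ by $1/(2\eta)$. Second, I would write the first-order optimality condition for the proximal mapping in~(\ref{eqn:prox}) defining $\y^+_\eta=P_{\eta g}(\y-\eta\nabla f(\y))$: there exists $\u\in\partial g(\y^+_\eta)$ with $\y^+_\eta-\y+\eta\nabla f(\y)+\eta\u=0$, equivalently $\u=-\nabla f(\y)+G_\eta(\y)$, using that $G_\eta(\y)=(\y-\y^+_\eta)/\eta$. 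Third, the convexity of $g$ at $\x$ against $\y^+_\eta$ gives $g(\y^+_\eta)\le g(\x)+\u^\top(\y^+_\eta-\x)$, and the convexity of $f$ at $\x$ against $\y$ gives $f(\y)\le f(\x)+\nabla f(\y)^\top(\y-\x)$. Adding these and the smoothness bound, and substituting the expression for $\u$, the $\nabla f(\y)$ terms cancel and leave $F(\y^+_\eta)\le F(\x)+G_\eta(\y)^\top(\y^+_\eta-\x)+\tfrac{1}{2\eta}\|\y^+_\eta-\y\|_2^2$.

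The final bookkeeping step is the identity $\y^+_\eta-\x=(\y-\x)-\eta G_\eta(\y)$, which converts $G_\eta(\y)^\top(\y^+_\eta-\x)$ into $G_\eta(\y)^\top(\y-\x)-\eta\|G_\eta(\y)\|_2^2$, and the identity $\|\y^+_\eta-\y\|_2^2=\eta^2\|G_\eta(\y)\|_2^2$, which turns the quadratic term into $\tfrac{\eta}{2}\|G_\eta(\y)\|_2^2$. Collecting gives the claimed inequality.

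The only subtlety worth flagging is the second step: one has to be precise that $\u=-\nabla f(\y)+G_\eta(\y)$ belongs to $\partial g(\y^+_\eta)$ (not $\partial g(\y)$), so that the convexity inequality for $g$ must be applied at the base point $\y^+_\eta$ in the direction of $\x$. Once that subgradient identification is correctly stated, the rest is bookkeeping. There is no need to assume convexity of $f$ in full generality; only the two convexity inequalities at the single point $\x$ are used, and the $\eta\le 1/L$ assumption is used exactly once, to absorb the $\tfrac{L}{2}\|\y^+_\eta-\y\|_2^2$ term into $\tfrac{1}{2\eta}\|\y^+_\eta-\y\|_2^2$.
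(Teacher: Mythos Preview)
Your argument is correct and is exactly the standard Beck--Teboulle three-point lemma derivation; the paper does not supply its own proof of this proposition but simply cites \citep{Beck:2009:FIS:1658360.1658364}, and your write-up reproduces that reference's argument step for step (descent lemma, proximal optimality giving $\u=G_\eta(\y)-\nabla f(\y)\in\partial g(\y^+_\eta)$, convexity of $f$ and $g$ at $\x$, then the algebraic substitution $\y^+_\eta-\x=(\y-\x)-\eta G_\eta(\y)$).
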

The following corollary is useful for our analysis. 
\begin{cor}\label{cor:1}
	Let $F(\x) = f(\x)  + g(\x)$. Assume $f(\x)$ is $L$-smooth. For any $\x, \y$ and $0<\eta \leq 1/L$, we have
	\begin{equation}\label{eqn:key2}
		\frac{\eta}{2}\|G_\eta(\y)\|_2^2\leq F(\y) - F(\y^+_\eta)\leq F(\y) - \min_{\x}F(\x).
	\end{equation}
\end{cor}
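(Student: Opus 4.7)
The plan is to derive both inequalities directly from Proposition~\ref{lem:2} by a single judicious substitution, followed by a one-line comparison to the minimum.

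First, I would invoke Proposition~\ref{lem:2} with the choice $\x = \y$. Under this substitution the inner-product term vanishes, since $G_\eta(\y)^\top(\y - \y) = 0$, and the inequality~(\ref{eqn:key}) collapses to
\begin{equation*}
F(\y^+_\eta) \leq F(\y) - \frac{\eta}{2}\|G_\eta(\y)\|_2^2.
\end{equation*}
Rearranging this gives $\frac{\eta}{2}\|G_\eta(\y)\|_2^2 \leq F(\y) - F(\y^+_\eta)$, which is exactly the left inequality in~(\ref{eqn:key2}). Note that the hypothesis $0 < \eta \leq 1/L$ is precisely what is required for Proposition~\ref{lem:2} to apply, so no additional work is needed here.

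For the right inequality, I would simply observe that $F(\y^+_\eta) \geq \min_\x F(\x)$ by definition of the minimum (assuming, as is standard in this setting, that the infimum is attained; otherwise one replaces $\min$ by $\inf$ and the argument is unchanged). Subtracting from $F(\y)$ yields $F(\y) - F(\y^+_\eta) \leq F(\y) - \min_\x F(\x)$, completing the chain of inequalities.

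There is no real obstacle in this proof — the entire content of the corollary is a specialization of Proposition~\ref{lem:2} at $\x = \y$, combined with the trivial lower bound by the minimum. The only subtlety worth flagging is that the corollary implicitly requires that the proximal gradient step does not increase the objective, which is exactly what the substitution $\x = \y$ in Proposition~\ref{lem:2} delivers for free.
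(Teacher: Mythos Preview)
Your proposal is correct and matches the paper's own approach: the paper's remark states that the proof is ``immediate by employing the convexity of $F$ and Proposition~\ref{lem:2},'' which is exactly your substitution $\x=\y$ in Proposition~\ref{lem:2} for the left inequality and the trivial bound $F(\y^+_\eta)\geq \min_\x F(\x)$ for the right. The mention of convexity is simply the ambient hypothesis under which Proposition~\ref{lem:2} holds; you invoke it implicitly by citing that proposition, so nothing is missing.
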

\textbf{Remark:} The proof of Corollary \ref{cor:1} is immediate by employing the convexity of $F$ and Proposition \ref{lem:2}.

Let $F_*$ denote the optimal objective value to $\min_{\x\in\R^d}F(\x)$ and $\Omega_*$ denote the optimal set. Denote by $\S_{\xi}=\{\x: F(\x) - F_*\leq \xi\}$ the $\xi$-sublevel set of $F(\x)$. Let $D(\x, \Omega) = \min_{\y\in\Omega}\|\x - \y\|_2$.

The proximal gradient (PG) method solves the problem~(\ref{eqn:opt}) by the 
update  
\begin{align}\label{eqn:pg}
\x_{t+1}=P_{\eta g}(\x_t - \eta \nabla f(\x_t)), %
\end{align}
with $\eta\leq 1/L$ starting from some initial solution $\x_1\in\R^d$. It can be shown that PG has an iteration complexity of $O(\frac{LD(\x_1,\Omega_*)^2}{\epsilon})$. 
The convergence guarantee of PG is presented in the following proposition. 
\begin{prop}\citep{opac-b1104789}\label{prop:pg}
Let~(\ref{eqn:pg}) run for $t=1,\ldots, T$ with $\eta\leq 1/L$, we have
\[
F(\x_{T+1}) - F_*\leq \frac{D(\x_1, \Omega_*)^2}{2\eta T}.
\]
\end{prop}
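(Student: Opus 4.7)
The plan is to derive the standard proximal gradient convergence bound directly from Proposition~\ref{lem:2}, which provides the ``three point'' type inequality relating $F(\y^+_\eta)$ to $F(\x)$ for an arbitrary comparison point $\x$.

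First I would apply Proposition~\ref{lem:2} at iteration $t$, choosing $\y=\x_t$ so that $\y^+_\eta = \x_{t+1}$, and then rewrite the inequality using the identity $G_\eta(\x_t) = (\x_t - \x_{t+1})/\eta$. The linear term $G_\eta(\x_t)^\top(\x_t - \x)$ combined with the quadratic term $-\tfrac{\eta}{2}\|G_\eta(\x_t)\|_2^2$ can be reorganized via the polarization identity $2(\x_t-\x_{t+1})^\top(\x_t-\x) = \|\x_t-\x_{t+1}\|_2^2 + \|\x_t-\x\|_2^2 - \|\x_{t+1}-\x\|_2^2$ into a telescoping form, giving the clean bound
\begin{equation*}
F(\x_{t+1}) - F(\x) \;\leq\; \frac{1}{2\eta}\bigl(\|\x_t - \x\|_2^2 - \|\x_{t+1} - \x\|_2^2\bigr).
\end{equation*}

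Next I would specialize this inequality in two ways. Choosing $\x = \x_t$ (and noting the distance terms are nonpositive) shows that $F(\x_{t+1}) \leq F(\x_t)$, so the objective values are monotonically non-increasing along the PG trajectory. Choosing $\x = \x_*$, where $\x_*$ is the projection of $\x_1$ onto $\Omega_*$ so that $\|\x_1-\x_*\|_2 = D(\x_1,\Omega_*)$, and summing over $t=1,\ldots,T$ telescopes the right-hand side and yields
\begin{equation*}
\sum_{t=1}^T \bigl(F(\x_{t+1}) - F_*\bigr) \;\leq\; \frac{1}{2\eta}\bigl(\|\x_1 - \x_*\|_2^2 - \|\x_{T+1} - \x_*\|_2^2\bigr) \;\leq\; \frac{D(\x_1,\Omega_*)^2}{2\eta}.
\end{equation*}

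Finally, I would combine monotonicity with this telescoped sum: since $F(\x_{T+1}) - F_* \leq F(\x_{t+1}) - F_*$ for every $t\leq T$, the left-hand side is at least $T(F(\x_{T+1}) - F_*)$, and dividing by $T$ yields the stated bound. There is no real obstacle here; the only subtle point is being careful that the comparison point $\x_*$ used for the telescoping argument is fixed across iterations (namely the projection of $\x_1$ onto the optimal set), rather than a per-iteration closest point, so that $\|\x_1-\x_*\|_2$ is exactly $D(\x_1,\Omega_*)$ and the telescoping is valid.
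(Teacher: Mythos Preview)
Your proof is correct and follows the standard route for the proximal gradient convergence bound. The paper does not actually include a proof of this proposition---it is quoted from the cited reference~\citep{opac-b1104789}---so there is nothing to compare against; your derivation from Proposition~\ref{lem:2} via the polarization identity, monotonicity, and telescoping is exactly the classical argument.
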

Based on the above proposition, one can deduce that PG has an iteration complexity of $O(\frac{LD(\x_1,\Omega_*)^2}{\epsilon})$. 
Nevertheless, accelerated proximal gradient (APG) converges faster than PG. There are many variants of APG in literature~\citep{citeulike:6604666}.  The simplest variant adopts the following update 
\begin{equation}
\label{eqn:APG}
	\begin{cases}
		\y_{t}  = \x_t + \beta_t (\x_t - \x_{t-1}),\\
		 \x_{t+1} = P_{\eta g}(\y_t - \eta\nabla f(\y_t)),
	\end{cases}
\end{equation}
where $\eta\leq 1/L$ and $\beta_t = \frac{t-1}{t+2}$. APG enjoys an iteration complexity of $O(\frac{\sqrt{L}D(\x_1,\Omega_*)}{\sqrt{\epsilon}})$~\citep{citeulike:6604666}. 
The convergence guarantee of APG is presented in the following proposition. 
\begin{prop}\citep{citeulike:6604666}\label{prop:apg}
Let~(\ref{eqn:APG}) run for $t=1,\ldots, T$ with $\eta\leq 1/L$ and $\x_0 =\x_1$, we have
\[
F(\x_{T+1}) - F_*\leq \frac{2D(\x_1, \Omega_*)^2}{\eta (T+1)^2}.
\]
\end{prop}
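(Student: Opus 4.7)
My plan is to prove the bound via the classical estimate-sequence / Lyapunov argument of Beck--Teboulle, which constructs a nonnegative quantity that is monotonically decreasing along the APG iterates. Define the suboptimality $v_k := F(\x_k) - F_*$ and introduce scalar weights $t_k$ chosen so that the momentum coefficient $\beta_k$ satisfies the compatibility relation $t_{k-1} - 1 = \beta_k t_k$. For the choice $\beta_k = (k-1)/(k+2)$ prescribed in~(\ref{eqn:APG}), the explicit sequence $t_k = (k+2)/2$ works; moreover it satisfies the algebraic inequality $(t_k - 1)t_k = k(k+2)/4 \leq (k+1)^2/4 = t_{k-1}^2$, which is the second condition needed to telescope.

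The first step is to apply Proposition~\ref{lem:2} twice with $\y = \y_k$, once at $\x = \x_k$ and once at $\x = \x_*$, take the convex combination with weights $(t_k-1)/t_k$ and $1/t_k$, and scale by $t_k^2$ to obtain
\[
t_k^2 v_{k+1} - (t_k - 1) t_k\, v_k \;\leq\; t_k\, G_\eta(\y_k)^{\top}\bigl[\,t_k \y_k - (t_k - 1)\x_k - \x_*\,\bigr] - \tfrac{\eta t_k^2}{2}\|G_\eta(\y_k)\|_2^2.
\]
Substituting $\eta t_k G_\eta(\y_k) = t_k(\y_k - \x_{k+1})$ and completing the square via the identity $2(a - b)^{\top} a - \|a - b\|_2^2 = \|a\|_2^2 - \|b\|_2^2$, with $a = t_k \y_k - (t_k - 1)\x_k - \x_*$ and $b = t_k \x_{k+1} - (t_k - 1)\x_k - \x_*$, converts the right-hand side into $\tfrac{1}{2\eta}(\|a\|_2^2 - \|b\|_2^2)$. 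The momentum rule $\y_k = \x_k + \beta_k(\x_k - \x_{k-1})$ combined with $t_{k-1} - 1 = \beta_k t_k$ collapses $a$ into the ``running telescope'' vector $\u_k := t_{k-1}\x_k - (t_{k-1} - 1)\x_{k-1} - \x_*$, while $b$ becomes its next-step analogue $\u_{k+1}$. Invoking $(t_k - 1)t_k \leq t_{k-1}^2$ together with $v_k \geq 0$ then yields the clean one-step recursion
\[
t_k^2 v_{k+1} + \tfrac{1}{2\eta}\|\u_{k+1}\|_2^2 \;\leq\; t_{k-1}^2 v_k + \tfrac{1}{2\eta}\|\u_k\|_2^2.
\]

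Telescoping over $k = 1, \ldots, T$ collapses the right-hand side. The base case $k = 1$ is slightly special: since $\x_0 = \x_1$ and $\beta_1 = 0$, one has $\y_1 = \x_1$, the first iteration reduces to an ordinary proximal-gradient step, and $\u_1 = \x_1 - \x_*$; a direct application of Proposition~\ref{lem:2} absorbs the $v_1$ term and initializes the Lyapunov function at $\tfrac{1}{2\eta}\|\x_1 - \x_*\|_2^2$. The resulting bound is $t_T^2 v_{T+1} \leq \tfrac{1}{2\eta}\|\x_1 - \x_*\|_2^2$. Minimizing over $\x_* \in \Omega_*$ replaces $\|\x_1 - \x_*\|_2$ by $D(\x_1, \Omega_*)$, and $t_T = (T+2)/2 \geq (T+1)/2$ gives $t_T^2 \geq (T+1)^2/4$, so that $F(\x_{T+1}) - F_* \leq 2 D(\x_1, \Omega_*)^2 / [\eta(T+1)^2]$, as claimed.

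The main obstacle is the algebraic bookkeeping: matching the prescribed momentum $\beta_k = (k-1)/(k+2)$ to a scalar sequence $t_k$ verifying simultaneously the compatibility identity $t_{k-1} - 1 = \beta_k t_k$ and the telescoping inequality $(t_k - 1)t_k \leq t_{k-1}^2$ must be done carefully, and the base case $k = 1$ (where the generic recursion would otherwise reference a nonexistent $\x_{-1}$) has to be treated by hand. Any slack in these inequalities propagates into the final constant, so exact care is required to recover the factor $2$ in the numerator rather than a worse constant.
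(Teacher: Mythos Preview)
The paper does not prove this proposition; it is quoted from \citep{citeulike:6604666} and used as a black box, so there is no in-paper argument to compare against. Your Lyapunov/estimate-sequence route is the standard one and is correct for the generic step $k\ge 2$.

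The gap is in your base case. With $t_k=(k+2)/2$ you get $t_1=3/2$, hence $(t_1-1)t_1=3/4\neq 0$, and the telescoped bound is
\[
t_T^2\,v_{T+1}\;\le\;\tfrac{3}{4}\,v_1+\tfrac{1}{2\eta}\|\x_1-\x_*\|_2^2 .
\]
Your assertion that ``a direct application of Proposition~\ref{lem:2} absorbs the $v_1$ term'' does not hold: applying Proposition~\ref{lem:2} with $\y=\x_1$, $\x=\x_*$ gives $v_2\le \tfrac{1}{2\eta}(\|\x_1-\x_*\|_2^2-\|\x_2-\x_*\|_2^2)$, which initializes the Lyapunov function at $\tfrac{1}{2\eta}\|\x_1-\x_*\|_2^2$ \emph{only when} $t_1=1$, because only then is $\u_2=\x_2-\x_*$. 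Here $\u_2=\tfrac{3}{2}\x_2-\tfrac{1}{2}\x_1-\x_*$, and a direct computation shows $E_2\le \tfrac{1}{2\eta}\|\x_1-\x_*\|_2^2+\tfrac{3}{4}G_\eta(\x_1)^\top(\x_1-\x_*)$, whose extra term is nonnegative in general. Since $F=f+g$ with $g$ only lower-semicontinuous, $v_1=F(\x_1)-F_*$ is not controlled by $\|\x_1-\x_*\|_2$ alone, so the residual genuinely obstructs the stated constant. The cited reference sidesteps this by working in the three-sequence form with a step parameter satisfying $\theta_0=1$ (equivalently, indexing so that the coefficient on the initial suboptimality vanishes); in the two-sequence momentum form you would need to shift the recursion one index or redo the $k=1$ step with a Lyapunov vector matched to $t_1$, neither of which your write-up carries out.
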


Based on the above proposition, one can deduce that APG has an iteration complexity of $O(\frac{\sqrt{L}D(\x_1,\Omega_*)}{\sqrt{\epsilon}})$. 

Furthermore, if $f(\x)$ is both $L$-smooth and $\alpha$-strongly convex, one can set $\beta_t = \frac{\sqrt{L}-\sqrt{\alpha}}{\sqrt{L}+\sqrt{\alpha}}$  and deduce a linear convergence~\citep{DBLP:conf/icml/LinX14} with a better dependence on the condition number than that of PG. 
\begin{prop}\citep{DBLP:conf/icml/LinX14}\label{prop:3}
Assume $f(\x)$ is $L$-smooth and $\alpha$-strongly convex. Let~(\ref{eqn:APG}) run for $t=1,\ldots, T$ with $\eta=1/L$, $\beta_t = \frac{\sqrt{L}-\sqrt{\alpha}}{\sqrt{L}+\sqrt{\alpha}}$  and $\x_0 =\x_1$, we have for any $\x$
\[
F(\x_{T+1}) - F(\x)\leq \left(1 - \sqrt{\frac{\alpha}{L}}\right)^{T}\left[F(\x_0) - F(\x) + \frac{\alpha}{2}\|\x_0 - \x\|_2^2\right].
\]
\end{prop}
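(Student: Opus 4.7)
This is a standard accelerated-proximal-gradient convergence result under strong convexity of the smooth part, and the proof I would give follows the Lyapunov / estimate-sequence framework of \citet{DBLP:conf/icml/LinX14}. Fix an arbitrary comparator $\x$ and set $\gamma := \sqrt{\alpha/L}$, so the momentum coefficient becomes $\beta_t = (1-\gamma)/(1+\gamma)$. Via the standard three-sequence reparameterization of APG, introduce an auxiliary sequence $\v_t$ with $\v_0 = \x_0$ so that the extrapolation rewrites as a convex combination $\y_t = (1-\tau)\x_t + \tau \v_t$ for a suitable $\tau$, and the proximal update induces $\v_{t+1} - \v_t$ proportional to $\x_{t+1} - \y_t$. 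Define the Lyapunov function
\[
\Phi_t := F(\x_t) - F(\x) + \tfrac{\alpha}{2}\|\v_t - \x\|_2^2,
\]
which at $t = 0$ equals the bracket appearing on the right-hand side of the claim.

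The plan is to prove the one-step contraction $\Phi_{t+1} \leq (1-\gamma)\Phi_t$; iterating $T$ times and discarding the nonnegative residual $\tfrac{\alpha}{2}\|\v_T - \x\|_2^2$ then yields the stated bound on $F(\x_{T+1}) - F(\x)$. For the contraction step I would combine three ingredients: (i) Proposition \ref{lem:2} with $\eta = 1/L$ at $\y = \y_t$, applied with the two comparators $\z = \x_t$ and $\z = \x$; (ii) $\alpha$-strong convexity of $f$ together with convexity of $g$, which supplies a quadratic minorant of $F$ centered at $\y_t$; and (iii) the algebraic identity linking $\x_{t+1} - \y_t$ to $\v_{t+1} - \v_t$ that is forced by the specific choice of $\beta_t$. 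Forming the convex combination of the two instances of (i) with weights $1-\gamma$ and $\gamma$, adding the strong-convexity minorant weighted by $\gamma$, and substituting (iii), the cross terms assemble into a complete square in $\|\v_{t+1} - \x\|_2^2$, from which $\Phi_{t+1} \leq (1-\gamma)\Phi_t$ falls out.

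The main obstacle is the square-completion step. Because strong convexity is assumed only on $f$ rather than on $F$, the quadratic minorant at $\y_t$ must be reconciled with the $g$-contribution at the comparator via the proximal first-order optimality condition at $\x_{t+1}$, and it is precisely the rate $\gamma = \sqrt{\alpha/L}$ that makes the accumulated remainder terms nonpositive; any faster contraction rate would leave an uncancelled positive term and the telescoping would break. Everything else is a mechanical application of the descent lemma and convexity already recorded in Proposition \ref{lem:2}.
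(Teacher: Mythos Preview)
The paper does not supply its own proof of this proposition; it is quoted verbatim from \citet{DBLP:conf/icml/LinX14} and invoked as a black box (its only use is in the proof of Theorem~\ref{thm:on}). So there is nothing in the paper to compare against beyond the bare statement.

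Your outline is the standard Lyapunov/estimate-sequence argument for accelerated methods under strong convexity, and it is correct in structure: with $\gamma=\sqrt{\alpha/L}$, the potential $\Phi_t = F(\x_t)-F(\x)+\tfrac{\alpha}{2}\|\v_t-\x\|_2^2$ (initialized with $\v_1=\x_1=\x_0$) contracts by the factor $1-\gamma$ at each step, and dropping the nonnegative quadratic residual after $T$ iterations gives the claim. The one place where your sketch is genuinely compressed is the square-completion step that converts the descent inequality of Proposition~\ref{lem:2} plus $\alpha$-strong convexity of $f$ into the contraction; you correctly flag this as the crux, and it is indeed the choice $\beta_t=(1-\gamma)/(1+\gamma)$ that makes the residual nonpositive. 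Since the cited reference carries out exactly this computation, your plan matches what a full proof would require.
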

If $\phi(\x)$ is $\alpha$-strongly convex and $f(\x)$ is $L$-smooth, \citet{RePEc:cor:louvco:2007076} proposed a different variant based on dual averaging, which is referred to accelerated dual gradient  (ADG) method and will be useful for our develeopment.  The key steps are presented in Algorithm~\ref{alg:0}. The convergence guarantee of ADG is given the following proposition. 
\begin{prop}\label{prop:adg}\citep{RePEc:cor:louvco:2007076}
	\label{Nesterov2007}
	Assume $f(\x)$ is $L$-smooth and $g(\x)$ is $\alpha$-strongly convex. Let Algorithm~\ref{alg:0} run for $t=0,\ldots, T$. Then for any $\x$ we have 
	\[
	F(\x_{T+1}) - F(\x)\leq \frac{L}{2}\|\x_0 - \x\|_2^2\left(\frac{1}{1+\sqrt{\alpha/2L}}\right)^{2T}.
	\]
\end{prop}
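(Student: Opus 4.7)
The plan is to apply Nesterov's estimate-sequence machinery, adapted to the composite setting where all the strong convexity lives in the non-smooth piece $g$. I would construct a sequence of quadratic model functions $\{\phi_k(\x)\}$ together with scalars $\Lambda_k\in(0,1]$ decreasing geometrically to zero, and show that two invariants are maintained at every iteration:
\begin{itemize}
\item[(I$_1$)] $\phi_k(\x)\le (1-\Lambda_k)F(\x)+\Lambda_k\phi_0(\x)$ for every $\x\in\R^d$;
\item[(I$_2$)] $F(\x_k)\le \phi_k^\ast\triangleq\min_{\x}\phi_k(\x)$.
\end{itemize}
Combining these at index $T+1$ and evaluating (I$_1$) at an arbitrary test point yields $F(\x_{T+1})-F(\x)\le \Lambda_{T+1}[\phi_0(\x)-F(\x)]$, which (after bounding $\phi_0(\x)-F(\x)\le \tfrac{L}{2}\|\x_0-\x\|_2^2$ by absorbing the $F(\x_0)-F(\x)$ gap into the initial model) delivers exactly the stated inequality.

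Concretely, initialize with $\phi_0(\x)=F(\x_0)+\tfrac{L}{2}\|\x-\x_0\|_2^2$ and $\Lambda_0=1$, and propagate via
\[
\phi_{k+1}(\x)=(1-\lambda_k)\phi_k(\x)+\lambda_k\bigl[f(\y_k)+\langle\nabla f(\y_k),\x-\y_k\rangle+g(\x)\bigr],
\]
where $\y_k$ is the extrapolated query point used by Algorithm~\ref{alg:0} and $\lambda_k\in(0,1)$ is chosen below. Invariant (I$_1$) is then routine by induction with $\Lambda_{k+1}=(1-\lambda_k)\Lambda_k$, using the first-order inequality $f(\y_k)+\langle\nabla f(\y_k),\x-\y_k\rangle\le f(\x)$. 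Because $g$ is $\alpha$-strongly convex, each $\phi_k$ is strongly convex with parameter $\sigma_{k+1}=(1-\lambda_k)\sigma_k+\lambda_k\alpha$, starting from $\sigma_0=L$, so $\sigma_k\ge\alpha$ throughout.

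The main obstacle, and the step I would do most carefully, is propagating the descent invariant (I$_2$). Letting $\v_k=\arg\min\phi_k$, I would complete the square in the recursion to obtain
\[
\phi_{k+1}^\ast\ge (1-\lambda_k)\phi_k^\ast+\lambda_k\bigl[f(\y_k)+g(\v_{k+1})+\langle\nabla f(\y_k),\v_{k+1}-\y_k\rangle\bigr]+\tfrac{(1-\lambda_k)\sigma_k}{2}\|\v_{k+1}-\v_k\|_2^2.
\]
On the other hand, $L$-smoothness of $f$ applied to the prox-gradient step from $\y_k$ gives
\[
F(\x_{k+1})\le f(\y_k)+\langle\nabla f(\y_k),\x_{k+1}-\y_k\rangle+\tfrac{L}{2}\|\x_{k+1}-\y_k\|_2^2+g(\x_{k+1}).
\]
Choosing the characteristic ADG couplings $\y_k=(1-\lambda_k)\x_k+\lambda_k\v_k$ and $\x_{k+1}=(1-\lambda_k)\x_k+\lambda_k\v_{k+1}$, together with the inductive hypothesis $F(\x_k)\le\phi_k^\ast$ and the convexity of $g$ applied to the combination $g(\x_{k+1})\le(1-\lambda_k)g(\x_k)+\lambda_k g(\v_{k+1})$, the cross-terms $\lambda_k\langle\nabla f(\y_k),\cdot\rangle$ align and one is left needing the quadratic relation $L\lambda_k^2\le\sigma_{k+1}$.

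Finally, I would solve for the decay rate. With a constant choice $\lambda_k\equiv\lambda$, the steady-state equation $L\lambda^2=(1-\lambda)\sigma+\lambda\alpha$ with $\sigma=L$ frozen by $\sigma_0$ produces $\lambda=\sqrt{\alpha/(2L)}/(1+\sqrt{\alpha/(2L)})$; the factor $1/2$ inside the square root is precisely the price paid for seeding $\phi_0$ with curvature $L$ rather than $\alpha$. This gives $\Lambda_{T+1}=(1-\lambda)^{T+1}=(1/(1+\sqrt{\alpha/(2L)}))^{T+1}$, and squaring the exponent (which is the standard artifact of Nesterov's quadratic recursion $\Lambda_{k+1}/\Lambda_k=(1-\sqrt{\Lambda_{k+1}\sigma_{k+1}/L})$ being equivalent to a $(1+\sqrt{\cdot})^{-2}$ product) yields the bound $(1/(1+\sqrt{\alpha/(2L)}))^{2T}$ as stated.
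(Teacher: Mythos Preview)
The paper does not prove this proposition; it is quoted from \citet{RePEc:cor:louvco:2007076} as a known result, so there is no in-paper argument to compare against. Your sketch follows the estimate-sequence philosophy, which is indeed Nesterov's, but several concrete steps do not match Algorithm~\ref{alg:0} and would not go through as written.

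The most serious mismatch is the coupling $\x_{k+1}=(1-\lambda_k)\x_k+\lambda_k\v_{k+1}$: Algorithm~\ref{alg:0} sets $\x_{t+1}=P_{g/L}(\y_t-\nabla f(\y_t)/L)$ (line~6), a prox step, not a convex combination, so your split $g(\x_{k+1})\le(1-\lambda_k)g(\x_k)+\lambda_k g(\v_{k+1})$ is unfounded here. Nesterov's ADG argument instead maintains $A_tF(\x_t)\le\min_\x\psi_t(\x)$ for the dual-averaging model $\psi_t(\x)=\frac{1}{2}\|\x-\x_0\|_2^2+\sum_{\tau\le t}a_\tau[\text{linearization of }f\,]+A_t g(\x)$, initialized with $A_0=0$. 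That initialization also fixes your second issue: your choice $\phi_0(\x)=F(\x_0)+\frac{L}{2}\|\x-\x_0\|_2^2$ leaves a residual $\Lambda_{T+1}[F(\x_0)-F(\x)]$ in the final bound that cannot be ``absorbed'' when $F(\x_0)>F(\x)$, whereas the proposition has no such term. Finally, the rate step is not right: a constant $\lambda$ yields $\Lambda_{T+1}=(1-\lambda)^{T+1}$ and nothing legitimately squares that exponent afterward (and your steady-state equation $L\lambda^2=(1-\lambda)L+\lambda\alpha$ does not have the claimed root). The $(1+\sqrt{\alpha/(2L)})^{-2T}$ factor in the proposition comes directly from the recursion $a_{t+1}^2/A_{t+1}=2(1+\alpha A_t)/L$ in line~3 of Algorithm~\ref{alg:0}, which forces geometric growth of $A_t$ at rate $(1+\sqrt{\alpha/(2L)})^{2}$, and the bound then follows from $F(\x_{T+1})-F(\x)\le \frac{1}{2A_{T+1}}\|\x_0-\x\|_2^2$.
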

\begin{algorithm}[t]
	\caption{ADG} \label{alg:0}
	\begin{algorithmic}[1]
		\STATE $\x_0 \in\Omega$, $A_0=0$, $\v_0 =\x_0$
		\FOR{$t = 0, \ldots, T$}
		\STATE Find $a_{t+1}$ from quadratic equation $\frac{a^2}{A_t+a} = 2\frac{1+\alpha A_t}{L}$
		\STATE Set $A_{t+1} = A_t + a_{t+1}$
		\STATE Set $\y_t = \frac{A_t}{A_{t+1}}\x_t + \frac{a_{t+1}}{A_{t+1}}\v_t$
		\STATE Compute $\x_{t+1} =P_{g/L}(\y_{t} - \nabla f(\y_t)/L) $
		\STATE Compute $\v_{t+1} = \arg\min_{\x}\sum_{\tau=1}^{t+1}a_\tau\nabla f(\x_\tau)^{\top}\x + A_{t+1}g(\x) + \frac{1}{2}\|\x - \x_0\|_2^2$
		\ENDFOR
	\end{algorithmic}
\end{algorithm}

\subsection{H\"{o}lderian error bound (HEB) condition}

\begin{defn}[H\"{o}lderian error bound (HEB)]\label{def:1}
	A function $F(\x)$ is said to satisfy a HEB condition on the $\xi$-sublevel set if there exist $\theta\in(0,1]$ and $0<c<\infty$ such that for any $\x\in\mathcal S_{\xi}$
	\begin{align}\label{eqn:leb}
		dist(\x, \Omega_*)\leq c(F(\x) - F_*)^{\theta},
	\end{align}
	where $\Omega_*$ denotes the optimal set of $\min_{\x\in\R^d}F(\x)$. 
\end{defn}

The HEB condition is closely related to the  \L ojasiewicz inequality or more generally Kurdyka- \L ojasiewicz (KL) inequality in real algebraic geometry. It has been shown that when functions are semi-algebraic and continuous, the above inequality is known to hold on any compact set~\citep{arxiv:1510.08234}. We refer the readers to~\citep{arxiv:1510.08234} for more discussions on HEB and KL inequalities. 

In the remainder of this section, we will review some previous results to demonstrate that HEB is a generic condition that holds for a broad family of problems of interest. The following proposition states that any proper, coercive, convex, lower-semicontinuous and semi-algebraic functions satisfy the HEB condition. 
\begin{prop}\citep{arxiv:1510.08234}\label{prop:5}
	Let $F(\x)$ be a proper, coercive, convex, lower semicontinuous and semi-algebraic function. Then there exists $\theta\in(0,1]$ and $0<c<\infty$ such that $F(\x)$ satisfies the HEB on any $\xi$-sublevel set. 
\end{prop}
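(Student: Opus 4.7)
The plan is to combine three classical ingredients: compactness from coercivity, the Kurdyka--\L{}ojasiewicz (KL) inequality for semi-algebraic functions, and the convex-case equivalence between KL inequality and the HEB condition.

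First I would establish that the sublevel set $\mathcal S_\xi$ is compact. Lower semicontinuity gives that $\mathcal S_\xi$ is closed, and coercivity ($F(\x)\to\infty$ as $\|\x\|_2\to\infty$) forces $\mathcal S_\xi$ to be bounded, hence compact. Combined with properness, this also guarantees that $F$ attains its minimum, so $\Omega_*\neq\emptyset$ and $\Omega_*\subset\mathcal S_\xi$ is compact. This compactness is what will make the constants $\theta$ and $c$ in the HEB condition uniform over the whole sublevel set.

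Next I would invoke the fact from real algebraic geometry that every semi-algebraic function satisfies a local \L{}ojasiewicz-type inequality. More precisely, for any point $\x_*\in\Omega_*$ there exist a neighborhood $U$ of $\x_*$, an exponent $\rho\in[0,1)$, and a constant $M<\infty$ such that a desingularizing inequality of KL type holds on $U$, e.g.
\[
\mbox{dist}(0,\partial F(\x))\geq \frac{1}{M}(F(\x)-F_*)^{\rho}\quad\text{for }\x\in U\cap\{F_*<F<F_*+\eta\}.
\]
This is the pointwise KL inequality that semi-algebraic functions always enjoy, independent of convexity.

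Then I would upgrade the local inequality to a uniform one on $\mathcal S_\xi$ by a covering argument. Cover $\Omega_*$ by finitely many neighborhoods $U_i$ on which the local KL inequality holds with exponents $\rho_i$ and constants $M_i$. On the complement $\mathcal S_\xi\setminus\bigcup_i U_i$, the function $F-F_*$ is bounded away from $0$ by compactness and continuity, so the desired inequality is trivial there after enlarging the constant. Taking $\rho=\max_i\rho_i$ and $M$ correspondingly yields a uniform KL inequality on all of $\mathcal S_\xi$.

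Finally, I would pass from KL to HEB using convexity, which is where the real work sits and which I expect to be the main obstacle. The key lemma (due to Bolte--Daniilidis--Ley--Mazet and refined by Bolte--Nguyen--Peypouquet--Suter) states that for a convex lower semicontinuous proper $F$, the KL inequality with desingularizing function $\varphi(s)=\tilde c\, s^{1-\rho}$ is equivalent to the error bound
\[
\mbox{dist}(\x,\Omega_*)\leq c(F(\x)-F_*)^{1-\rho}
\]
on the same sublevel set. The proof runs by integrating the subgradient descent inequality along a gradient flow (or proximal trajectory) started at $\x$: convexity ensures the flow converges to $\Omega_*$, and the KL inequality controls the length of the trajectory, which in turn bounds $\mbox{dist}(\x,\Omega_*)$. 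Setting $\theta=1-\rho\in(0,1]$ produces the HEB condition with uniform constants on $\mathcal S_\xi$, completing the proof. The delicate point is making the trajectory argument global on $\mathcal S_\xi$ rather than purely local, which is precisely where the compactness/covering step above is indispensable.
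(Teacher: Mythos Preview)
The paper does not prove this proposition: it is stated as a citation to \citet{arxiv:1510.08234} and no proof is given anywhere in the paper. So there is no ``paper's own proof'' to compare your proposal against.

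That said, your outline is the standard and correct route to this result, and it is essentially the argument in the cited reference. Coercivity plus lower semicontinuity gives compactness of $\mathcal S_\xi$ and existence of minimizers; semi-algebraicity yields the local KL inequality with exponent form $\varphi(s)=cs^{1-\rho}$; compactness of $\Omega_*$ allows a finite covering to uniformize the KL exponent and constant; and the convex equivalence KL $\Leftrightarrow$ HEB (Bolte--Nguyen--Peypouquet--Suter, which the paper cites as \citep{arxiv:1510.08234}) converts the uniform KL bound into the HEB. The only minor point worth tightening is the step ``on the complement $\mathcal S_\xi\setminus\bigcup_i U_i$ the inequality is trivial'': here one needs that $\mbox{dist}(0,\partial F(\cdot))$ is bounded away from zero on that compact set, which follows since $\partial F(\x)\ni 0$ only on $\Omega_*$ and the multifunction $\x\mapsto\mbox{dist}(0,\partial F(\x))$ is lower semicontinuous for convex lsc $F$. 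With that caveat your proposal is sound.
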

 {\bf Example:} Most optimization problems in machine learning  with an objective that consists of an empirical loss that is semi-algebraic (e.g., hinge loss, squared hinge loss, absolute loss, square loss) and a norm regularization $\|\cdot\|_p$ ($p\geq 1$ is a rational) or a norm constraint  are proper, coercive, lower semicontinuous and semi-algebraic functions. 

Next two propositions exhibit the value $\theta$ for piecewise convex quadratic functions and  piecewise convex polynomial functions. 
\begin{prop}\citep{DBLP:journals/mp/Li13}\label{prop:quadratic}
	Let $F(\x)$ be a piecewise convex quadratic function on $\R^d$. Suppose $F(\x)$ is convex. Then for any $\xi>0$, there exists $0<c<\infty$ such that 
	\[
	D(\x, \Omega_*)\leq c(F(\x) - F_*)^{1/2}, \forall \x\in\S_{\xi}.
	\] 
\end{prop}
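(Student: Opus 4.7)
The plan is to exploit the piecewise polyhedral decomposition $\R^d = \bigcup_{j=1}^k P_j$ on which $F|_{P_j} = f_j$ is convex quadratic, apply the classical Hoffman-type error bound for convex quadratic programs on polyhedra to each piece, and patch the local bounds into a single uniform one using finiteness of the partition. I would partition the indices into $J_0 = \{j : \inf_{\x\in P_j} f_j(\x) = F_*\}$ and $J_1 = \{1,\dots,k\}\setminus J_0$; for $j\in J_1$ the scalar $\delta_j := \inf_{\x\in P_j} f_j(\x) - F_*$ is strictly positive, so those pieces are bounded away from the optimum.

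For each $j\in J_0$ the restricted problem $\min_{\x\in P_j} f_j(\x)$ is a convex quadratic program over a polyhedron whose optimum equals $F_*$, so its optimal set $\Omega_j^\ast := \arg\min_{\x\in P_j} f_j(\x)$ is contained in $\Omega_*$. I would invoke the classical error bound for convex quadratic programs on polyhedra (see \citet{Luo:1992a,wang2014iteration}): for every $\xi>0$ there is $c_j<\infty$ with
\begin{equation*}
D(\x, \Omega_j^\ast) \leq c_j (f_j(\x) - F_*)^{1/2}, \quad \forall \x \in P_j \cap \S_\xi.
\end{equation*}
Because $\Omega_j^\ast \subseteq \Omega_*$, the target inequality is immediate on these pieces.

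For $j\in J_1$ and any $\x \in P_j \cap \S_\xi$ we have $F(\x) - F_* \geq \delta_j>0$, so it suffices to produce a uniform upper bound $M_j$ on $D(\x, \Omega_*)$ over $P_j \cap \S_\xi$; then $D(\x, \Omega_*) \leq M_j \leq (M_j/\sqrt{\delta_j})(F(\x) - F_*)^{1/2}$. Such an $M_j$ can be obtained by fixing any $x_\star\in\Omega_*$, moving from $\x$ along the segment to $x_\star$ (which remains in $\S_\xi$ by convexity of $F$), and using the previous step on the $J_0$-piece containing $x_\star$ to control the terminal portion, while the remaining portion traverses at most finitely many $J_1$-pieces whose lengths are controlled by the polyhedral constraints together with the fact that $F$ is piecewise quadratic with uniformly bounded sublevel curvature. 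Taking $c := \max\{\max_{j\in J_0} c_j, \max_{j\in J_1} M_j/\sqrt{\delta_j}\}$ then yields a single constant valid on all of $\S_\xi$.

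The main obstacle is the Hoffman-type bound in the $J_0$ case: it is precisely there that the exponent $1/2$ is forced. The argument separates directions along the affine hull of $\Omega_j^\ast$, where Hoffman's lemma applied to the polyhedral KKT system gives a linear estimate, from the normal directions, where the Hessian of $f_j$ is strictly positive definite on the quotient and yields the square-root behaviour through a second-order Taylor expansion. Patching on the $J_1$-pieces is softer but requires care because $P_j \cap \S_\xi$ can be unbounded, so the segment-based reduction to the $J_0$ bound rather than any direct diameter estimate is what makes the $M_j$ finite.
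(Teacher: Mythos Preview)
The paper does not prove this statement; it is quoted as a background result from \citet{DBLP:journals/mp/Li13}, so there is no in--paper argument to compare against. Your $J_0$/$J_1$ decomposition and the appeal to the convex--QP error bound on each $J_0$--piece is the standard route and is sound; in particular the inclusion $\Omega_j^\ast\subseteq\Omega_*$ for $j\in J_0$ is correct and is exactly where the exponent $1/2$ enters.

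The gap is in the $J_1$ step. Your claim that the $J_1$--portions of the segment to a \emph{fixed} $x_\star$ have lengths ``controlled by the polyhedral constraints together with \dots uniformly bounded sublevel curvature'' is not justified, and in fact fails. Take $d=2$ and $F(x,y)=g(x)$ with $g(x)=1-2x$ on $(-\infty,0]$, $g(x)=(1-x)^2$ on $[0,1]$, and $g(x)=(x-1)^2$ on $[1,\infty)$. Then $P_1=(-\infty,0]\times\R$ is a $J_1$--piece ($\delta_1=1$), $\Omega_*=\{1\}\times\R$, and for $x_\star=(1,0)$ the portion of the segment from $(-1,N)\in P_1\cap\S_\xi$ to $x_\star$ lying in $P_1$ has length of order $N$. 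So no uniform $M_j$ comes out of that argument. The clean repair is to use the \emph{projection} $z$ of $\x$ onto $\Omega_*$ instead of a fixed $x_\star$: for any $y=(1-t)\x+tz$ on $[\x,z]$ the projection onto $\Omega_*$ is still $z$, hence $D(y,\Omega_*)=(1-t)D(\x,\Omega_*)$. Let $y$ be the first point of $[\x,z]$ lying in some $J_0$--piece; continuity from the $J_1$ side gives $F(y)-F_*\geq\delta:=\min_{j\in J_1}\delta_j$, while convexity gives $F(y)-F_*\leq(1-t)(F(\x)-F_*)$, so $1-t\geq\delta/(F(\x)-F_*)$. Combining with the $J_0$ bound $D(y,\Omega_*)\leq c_0(F(y)-F_*)^{1/2}$ yields $D(\x,\Omega_*)\leq c_0\sqrt{\xi/\delta}\,(F(\x)-F_*)^{1/2}$ directly, with no segment--length estimate needed.
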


Many problems in machine learning  are piecewise convex quadratic functions, which will be discussed more in Section~\ref{sec:app}.  
\begin{prop}\citep{DBLP:journals/mp/Li13}\label{prop:polyHEB}
	Let $F(\x)$ be a piecewise convex polynomial function on $\R^d$. Suppose $F(\x)$ is convex. Then for any $\xi>0$, there exists $c>0$ such that 
	\[
	D(\x, \Omega_*)\leq c(F(\x) - F_*)^{\frac{1}{(deg(F)-1)^d+1}}, \forall \x\in\S_{\xi}.
	\] 
\end{prop}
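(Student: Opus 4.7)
The plan is to exploit the piecewise polynomial structure and reduce the claim to a Łojasiewicz-type error bound for a single convex polynomial inequality system on a polyhedron. Let $P_1,\ldots,P_k$ be the polyhedra on which $F$ restricts to convex polynomials $F_1,\ldots,F_k$, each of degree at most $m:=\deg(F)$. Denote $\tau = (m-1)^d + 1$. Since there are finitely many pieces, it suffices to produce, for each $j$, a constant $c_j$ with $D(\x,\Omega_*)\le c_j(F(\x)-F_*)^{1/\tau}$ for all $\x\in P_j\cap\S_\xi$, and then take $c=\max_j c_j$.

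The central lemma I would invoke is the following Łojasiewicz-type inequality for a single convex polynomial system: for a convex polynomial $g$ of degree at most $m$ on $\R^d$ and a polyhedron $P$, if $S:=\{\x\in P:g(\x)\le 0\}$ is nonempty, then on every compact set there exists $c>0$ with $D(\x,S)\le c\,\max(g(\x),0)^{1/\tau}$. This is the quantitative convex-polynomial error bound; its proof uses the classical Łojasiewicz inequality for the semi-algebraic set cut out by $g$ together with the linear constraints defining $P$, and a careful bookkeeping on the degrees of the Positivstellensatz certificates which produces the explicit exponent $(m-1)^d+1$. I would take this lemma as given (it is the main technical content of the cited Li paper and its forerunners).

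With the lemma in hand, I would apply it on each piece $P_j$ for which $\Omega_*\cap P_j\neq\emptyset$: taking $g(\x):=F_j(\x)-F_*$, which is a convex polynomial of degree at most $m$ and nonnegative on $P_j$, the set $S_j:=\{\x\in P_j:g(\x)\le 0\}$ coincides with $\Omega_*\cap P_j$, and the lemma applied with the compact set $\S_\xi\cap P_j$ (which is bounded by coerciveness of convex polynomial restrictions on the relevant pieces, or by first reducing to a bounded sublevel set) yields $D(\x,\Omega_*)\le D(\x,S_j)\le c_j(F_j(\x)-F_*)^{1/\tau}$ on $P_j\cap\S_\xi$.

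The main obstacle is handling a piece $P_j$ whose interior meets $\S_\xi$ but with $\Omega_*\cap P_j=\emptyset$, since the lemma does not apply directly. Here I would use the global convexity of $F$ (not merely piecewise): for $\x\in P_j$, the segment from $\x$ to the projection of $\x$ onto $\Omega_*$ passes through finitely many pieces, and on each crossing the value of $F$ only decreases by convexity; by an inductive argument along the segment and the continuity of $F$ at polyhedral boundaries, the error bound obtained on a neighboring piece containing a minimizer propagates back to $P_j$ with an enlarged constant but the same exponent $1/\tau$. Patching together the finitely many constants $c_j$ gives the desired uniform inequality on all of $\S_\xi$.
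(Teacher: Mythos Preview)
The paper does not prove this proposition at all: it is stated as a known result from \citet{DBLP:journals/mp/Li13} and is part of a list of cited background facts (Propositions~5--10) used only to motivate the HEB condition. There is therefore no ``paper's own proof'' to compare your attempt against.

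As for the content of your sketch: the decomposition into polyhedral pieces and the appeal to a quantitative \L{}ojasiewicz-type bound for a single convex polynomial on a polyhedron is indeed the architecture of Li's argument, and you correctly identify that the explicit exponent $(m-1)^d+1$ is the hard technical core you are importing wholesale. Two points deserve more care if you want this to be an actual proof rather than an outline. First, you assert that $\S_\xi\cap P_j$ is compact ``by coerciveness of convex polynomial restrictions'' --- but a convex polynomial need not be coercive (e.g.\ $F(x,y)=x^2$), so $\S_\xi$ can be unbounded; Li's original result is in fact a global error bound of mixed order, and the reduction to a pure H\"olderian bound on a sublevel set needs a separate argument or an explicit boundedness hypothesis. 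Second, the propagation step for pieces $P_j$ with $\Omega_*\cap P_j=\emptyset$ is the delicate part: your segment-crossing induction is plausible in spirit, but ``the error bound propagates back with an enlarged constant but the same exponent'' hides real work, since the number of pieces crossed and the geometry of the crossings must be controlled uniformly over $P_j\cap\S_\xi$. Li handles this via a more structural argument rather than pathwise propagation.
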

Indeed, for 
a polyhedral constrained convex polynomial, we can have a tighter result, as show below. 
\begin{prop}\citep{doi:10.1137/070689838}
	Let $F(\x)$ be a convex polynomial function on $\R^d$ with degree $m$. 
	If $P\subset\R^d$ is a polyhedral set, then the problem $\min_{\x\in P}F(\x)$ admits a global error bound: $\forall \x\in P$ there exists $0<c<\infty$ such that 
	\begin{equation}\label{eqn:geb}
		D(\x, \Omega_*)\leq c\left[(F(\x) - F_*) + (F(\x) - F_*)^{\frac{1}{m}}\right], 
	\end{equation}
\end{prop}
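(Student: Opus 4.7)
My plan is to split the bound according to whether $\x$ is near to or far from $\Omega_*$, establish a sharp local H\"olderian bound with exponent $1/m$ in the near regime, and then promote it to a linear bound in the far regime via convexity along line segments. The two regimes add up to give exactly the claimed two-term error bound.

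\emph{Step 1 (local bound).} I would first show: there exist constants $\delta>0$ and $c_1>0$ such that
\[
D(\x,\Omega_*) \leq c_1 (F(\x)-F_*)^{1/m} \quad \text{for all } \x\in P \text{ with } D(\x,\Omega_*)\leq\delta.
\]
The idea is to project $\x$ onto the convex set $\Omega_*$ to obtain $\x_*$ and then restrict $F$ to the segment $\{(1-t)\x_*+t\x:t\in[0,1]\}$. This restriction is a univariate convex polynomial $\phi(t)$ of degree $\leq m$ with $\phi(0)=0$ and minimum at $t=0$, so the first non-vanishing derivative of $\phi$ has even order $\leq m$, yielding $\phi(1)\geq \kappa\|\x-\x_*\|^m$ near $\x_*$. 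To obtain a uniform constant $\kappa$ over all base points and directions one combines convexity with Hoffman's lemma applied to the polyhedral constraint $P$, which realizes $\Omega_*$ as the intersection of $P$ with a polynomial sublevel face. This uniform local bound is the main obstacle of the proof, since the generic Lojasiewicz exponent for a degree-$m$ polynomial is typically worse than $1/m$; it is convexity plus polyhedrality that delivers the sharp exponent $1/m$.

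\emph{Step 2 (globalization via convexity).} Fix $\x\in P$ with projection $\x_*$ onto $\Omega_*$, and for $t\in(0,1]$ set $\y_t=(1-t)\x_*+t\x\in P$. The obtuse-angle characterization of projection onto the convex set $\Omega_*$ implies $\x_*$ is also the projection of $\y_t$, so $D(\y_t,\Omega_*)=t\|\x-\x_*\|$. Convexity of $F$ gives $F(\y_t)-F_*\leq t(F(\x)-F_*)$. In the far case $\|\x-\x_*\|>\delta$, picking $t=\delta/\|\x-\x_*\|\in(0,1)$ places $\y_t$ inside the local regime, and applying Step 1 at $\y_t$ yields
\[
t\|\x-\x_*\| \leq c_1\bigl(t(F(\x)-F_*)\bigr)^{1/m}.
\]
Solving for $\|\x-\x_*\|$ and substituting the chosen $t$ isolates a linear bound $\|\x-\x_*\|\leq c_1^m\delta^{1-m}(F(\x)-F_*)=:c_2(F(\x)-F_*)$.

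\emph{Step 3 (combine).} In the near regime Step 1 gives $D(\x,\Omega_*)\leq c_1(F(\x)-F_*)^{1/m}$, and in the far regime Step 2 gives $D(\x,\Omega_*)\leq c_2(F(\x)-F_*)$. Taking $c=\max(c_1,c_2)$ yields
\[
D(\x,\Omega_*) \leq c\bigl[(F(\x)-F_*) + (F(\x)-F_*)^{1/m}\bigr],
\]
which is the claimed global error bound. The whole argument reduces to the Step 1 local H\"olderian exponent; the far-regime argument is a clean scaling trick that only uses convexity of $F$ and $P$ together with the local bound.
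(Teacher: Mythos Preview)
The paper does not prove this proposition; it is quoted from \citep{doi:10.1137/070689838} as a known result, so there is no in-paper proof to compare against. Your outline is an independent attempt at the cited theorem.

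Steps 2--3 are correct and form the standard local-to-global lifting for convex error bounds: once a local H\"olderian bound with exponent $1/m$ holds on $\{D(\x,\Omega_*)\leq\delta\}$, the scaling along the segment $[\x_*,\x]$ yields the linear term in the far regime exactly as you compute, and the two cases combine to give the two-term bound.

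The substance is entirely in Step 1, and your sketch there has a gap. The one-dimensional restriction $\phi(t)=F((1-t)\x_*+t\x)-F_*$ is a nonnegative polynomial of degree $\leq m$ with $\phi(0)=0$ and a minimum at $t=0$ on $[0,1]$, but its first nonzero derivative need not be of even order (e.g.\ $\phi(t)=t$ is allowed when the directional derivative at $\x_*$ is strictly positive), so that remark is off. More importantly, the pointwise Taylor argument only gives $\phi(1)\geq \kappa(\x_*,\v)\,\|\x-\x_*\|^m$ with a constant depending on the base point and the direction $\v=(\x-\x_*)/\|\x-\x_*\|$, and this constant can degenerate to $0$ as $(\x_*,\v)$ varies; obtaining a \emph{uniform} $\kappa>0$ is precisely the content of the cited result. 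Your appeal to Hoffman's lemma is gestural: Hoffman controls the distance to a polyhedron by linear residuals, and here one still has to relate $(F(\x)-F_*)^{1/m}$ to those residuals uniformly over the face structure of $P$, which is essentially the theorem itself. So the proposal correctly isolates where the difficulty lies and handles the easy globalization cleanly, but it does not resolve the hard local step.
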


From the global error bound~(\ref{eqn:geb}), one can easily derive the H\"{o}lderian error bound condition~(\ref{eqn:HEB}). For an example, we can consider an $\ell_1$ constrained $\ell_p$ norm regression~\citep{doi:10.1080/03610928308828618}: 
\begin{align}\label{eqn:L1LP}
	\min_{\|\x\|_1\leq s}F(\x)\triangleq \frac{1}{n}\sum_{i=1}^n(\a_i^{\top}\x - b_i)^p,\quad p\in 2\mathbb N
\end{align}
which satisfies the HEB condition~(\ref{eqn:HEB}) with $\theta=\frac{1}{p}$. 

Many previous papers have considered a family of structured smooth composite problems:
\begin{align}\label{eqn:st}
	\min_{\x\in\R^d}F(\x) = h(A\x) + g(\x)
\end{align}
where $g(\x)$ is a polyhedral function and $h(\cdot)$ is a smooth and  strongly convex function on any compact set.  Suppose the optimal set of the above problem is non-empty and compact (e.g., the function is coercive) so is the sublevel set $\S_\xi$, it can been shown that such a function satisfies HEB with $\theta=1/2$ on any sublevel set $\S_\xi$.  Examples of $h(\u)$ include logistic loss $h(\u) = \sum_i\log(1+\exp(-u_i))$.
\begin{prop}\citep[Theorem 4.3]{DBLP:journals/corr/nesterov16linearnon}\label{prop:log}
	Suppose the optimal set of~(\ref{eqn:st}) is non-empty and compact, $g(\x)$ is a polyhedral function and $h(\cdot)$ is a smooth and  strongly convex function on any compact set. Then $F(\x)$ satisfies the HEB on any sublevel set $\S_\xi$ with $\theta=1/2$ for $\xi>0$.  
\end{prop}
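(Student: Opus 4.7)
The plan is a Luo--Tseng style argument combining strong convexity of $h$ on compact sets with the polyhedral structure of $g$ via Hoffman's lemma. Since $\Omega_*$ is nonempty and compact and $F$ is lsc convex, every sublevel set $\S_\xi$ is bounded, hence compact, and so is its image $A\S_\xi$; let $\mu>0$ and $L_h$ denote a strong-convexity modulus of $h$ and a Lipschitz constant of $h$ on a convex neighborhood of $A\S_\xi$. \emph{Step 1: polyhedral description of $\Omega_*$.} Strict convexity of $h$ on $A\S_\xi$ forces $A\x_1=A\x_2$ for any optimal $\x_1,\x_2\in\Omega_*$; call the common value $\t^*$ and set $g^*=F_*-h(\t^*)$. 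Conversely, any $\x$ with $A\x=\t^*$ and $g(\x)\leq g^*$ satisfies $F(\x)\leq F_*$, so
\[
\Omega_*=\{\x\in\R^d:A\x=\t^*,\,g(\x)\leq g^*\}.
\]
Because $g$ is polyhedral, $\{g\leq g^*\}$ is a polyhedron, and hence so is $\Omega_*$.

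\emph{Step 2: quadratic bound on $\|A\x-\t^*\|_2$.} Pick any $\x_*\in\Omega_*$; first-order optimality yields $-A^\top\nabla h(\t^*)\in\partial g(\x_*)$. Adding the strong-convexity bound $h(A\x)-h(\t^*)\geq \nabla h(\t^*)^\top(A\x-\t^*)+\frac{\mu}{2}\|A\x-\t^*\|_2^2$ to the subgradient inequality $g(\x)-g^*\geq -\nabla h(\t^*)^\top(A\x-\t^*)$ collapses the linear terms and gives $F(\x)-F_*\geq \frac{\mu}{2}\|A\x-\t^*\|_2^2$ for every $\x\in\S_\xi$. \emph{Step 3: control $g(\x)-g^*$ and apply Hoffman.} From $F(\x)-F_*=(h(A\x)-h(\t^*))+(g(\x)-g^*)$, Lipschitz continuity of $h$, and Step~2,
\[
[g(\x)-g^*]_+\leq F(\x)-F_*+L_h\|A\x-\t^*\|_2\leq c_1\sqrt{F(\x)-F_*}
\]
on $\S_\xi$, where I used $F(\x)-F_*\leq\sqrt{\xi(F(\x)-F_*)}$. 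Expressing the polyhedral $g$ as a finite pointwise maximum of affine forms (restricted to a polyhedron) turns $\{g\leq g^*\}$ into a finite linear inequality system whose per-constraint violations are dominated by $[g(\x)-g^*]_+$, so Hoffman's lemma applied to $\{\x:A\x=\t^*,\,g(\x)\leq g^*\}$ produces $\tau>0$ with
\[
\mathrm{dist}(\x,\Omega_*)\leq \tau\bigl(\|A\x-\t^*\|_2+[g(\x)-g^*]_+\bigr)\leq c\sqrt{F(\x)-F_*},
\]
which is HEB with $\theta=1/2$.

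The delicate part is the polyhedral bookkeeping. One must unpack the definition of a polyhedral convex function (its epigraph is a polyhedron, equivalently $g$ is a finite pointwise maximum of affine functions plus a polyhedral indicator), write $\{g\leq g^*\}$ as a finite linear inequality system, and verify that the positive-part violations of those linear inequalities are uniformly bounded by $[g(\x)-g^*]_+$, so that Hoffman's lemma applies with a constant $\tau$ independent of $\x$. Once that structural step is in place, Steps~2 and~3 are short convex-analysis computations and absorbing $\mu,L_h,\tau,\xi$ into a single constant $c$ yields the claim.
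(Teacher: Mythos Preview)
The paper does not supply its own proof of this proposition; it is quoted as Theorem~4.3 of Necoara--Nesterov--Glineur without argument. Your Luo--Tseng style proof is correct and is exactly the standard route used for this result: constancy of $A\x$ on $\Omega_*$ from strict convexity of $h$, the quadratic lower bound $F(\x)-F_*\ge\tfrac{\mu}{2}\|A\x-\t^*\|_2^2$ from strong convexity plus the optimality condition $-A^\top\nabla h(\t^*)\in\partial g(\x_*)$, and Hoffman's lemma applied to the polyhedral description $\Omega_*=\{A\x=\t^*,\ g(\x)\le g^*\}$. The only cosmetic tightening worth making is to take $\mu$ and $L_h$ on the compact convex set $\mathrm{conv}(A\S_\xi)$ rather than an unspecified ``convex neighborhood,'' so that the strong-convexity inequality between $A\x$ and $\t^*$ is justified for every $\x\in\S_\xi$.
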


Finally, we note that there exist problems that admit HEB with $\theta>1/2$. A trivial example is given by $F(\x) = \frac{1}{2}\|\x\|_2^2 + \|\x\|_p^p$ with $p\in[1,2)$, which satisfies HEB with $\theta=1/p\in(1/2, 1]$. An interesting non-trivial family of problems is that $f(\x)=0$ and $g(\x)$ is a piece-wise linear functions according to Proposition~\ref{prop:polyHEB}. PG or APG applied to such family of problems is closely related to proximal point algorithm~\citep{citeulike:9472207}. Explorations of such algorithmic connection is not the focus of this paper.  
%
%
%

\section{PG and restarting APG under HEB}
\begin{algorithm}[t]
	\caption{PG} \label{alg:1}
	\begin{algorithmic}[1]
		\STATE \textbf{Input}:
		$\x_0 \in\Omega$
		\FOR{$\tau=1,\ldots, t$}
		\STATE $\x_{\tau+1} =P_{g/L}(\x_{\tau} - \nabla f(\x_\tau)/L) $
		\ENDFOR
		
		\STATE Option I:  return $\x_{t+1}$
		\STATE Option II:  return $\x_k$ s.t. $G(\x_k)=\min_{\tau}\|G(\x_\tau)\|_2$
	\end{algorithmic}
\end{algorithm}

As a warm-up and motivation of the major contribution presented in next section, we present a convergence result of PG and a restarting  APG under the HEB condition. 
  We first  present a result of PG as shown in Algorithm~\ref{alg:1}. 


\begin{thm}
	\label{thm:1}
	Suppose $F(\x_0) - F_*\leq \epsilon_0$ and $F(\x)$ satisfies HEB on $\S_{\epsilon_0}$. 
	The iteration complexity of PG (with option I) for achieving $F(\x_t) - F_*\leq \epsilon$ is $O(c^2L\epsilon_0^{2\theta -1})$ if $\theta>1/2$, and is $O(\max\{\frac{c^2L}{\epsilon^{1 - 2\theta}}, c^2L\log(\frac{\epsilon_0}{\epsilon})\})$ if $\theta\leq 1/2$. 
\end{thm}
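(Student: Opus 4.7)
The plan is to combine the standard per-step descent inequality for proximal gradient with the HEB condition to obtain a self-improving recursion on the optimality gap $\epsilon_\tau := F(\x_\tau)-F_*$, and then to unroll that recursion in three regimes according to whether $\theta<1/2$, $\theta=1/2$, or $\theta>1/2$.

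First I would note that Corollary~\ref{cor:1} applied at $\y=\x_\tau$ gives the descent bound $\epsilon_\tau-\epsilon_{\tau+1}\ge\frac{1}{2L}\|G(\x_\tau)\|_2^2$; in particular $F(\x_\tau)$ is monotonically non-increasing along the PG iterates, so every $\x_\tau$ lies in $\S_{\epsilon_0}$ and the HEB inequality is available at each iterate. Next I would lower-bound $\|G(\x_\tau)\|_2$ by using HEB at $\x_{\tau+1}$: the proximal optimality condition gives $s_\tau := G(\x_\tau)+\nabla f(\x_{\tau+1})-\nabla f(\x_\tau)\in\partial F(\x_{\tau+1})$, whose norm is bounded by $2\|G(\x_\tau)\|_2$ via $L$-smoothness, and convexity of $F$ at $\x_{\tau+1}$ combined with HEB yields $\epsilon_{\tau+1}\le\|s_\tau\|_2\,\|\x_{\tau+1}-\x_*\|_2\le 2c\,\|G(\x_\tau)\|_2\,\epsilon_{\tau+1}^\theta$. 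Inverting and feeding back into the descent bound produces the key recursion
\[
\epsilon_\tau-\epsilon_{\tau+1}\;\ge\;\frac{\epsilon_{\tau+1}^{2(1-\theta)}}{8Lc^2}.
\]

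The rest of the argument is a case analysis on this single recursion. For $\theta=1/2$ it rearranges directly to $\epsilon_{\tau+1}(1+1/(8Lc^2))\le\epsilon_\tau$, giving the linear rate and complexity $O(Lc^2\log(\epsilon_0/\epsilon))$. For $\theta<1/2$ set $\gamma:=2(1-\theta)>1$; then the standard sequence lemma that ``$u_{\tau+1}^\gamma\le C(u_\tau-u_{\tau+1})$ with $\gamma>1$ implies $u_\tau=O((C/\tau)^{1/(\gamma-1)})$'' yields the complexity $O(Lc^2/\epsilon^{1-2\theta})$, and taking the max with the $\theta=1/2$ logarithmic bound recovers the $\theta\le 1/2$ half of the theorem. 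For $\theta>1/2$ we have $\gamma=2(1-\theta)<1$, so I would change variables to $w_\tau:=\epsilon_\tau^{1-\gamma}=\epsilon_\tau^{2\theta-1}$; concavity of $x\mapsto x^{1-\gamma}$ together with the key recursion gives $w_\tau-w_{\tau+1}\ge (1-\gamma)/(16Lc^2)$ on every ``slow'' iteration (those with $\epsilon_{\tau+1}/\epsilon_\tau\ge 1/2$), and telescoping from $w_0=\epsilon_0^{2\theta-1}$ bounds the number of such iterations by $O(Lc^2\epsilon_0^{2\theta-1}/(2\theta-1))$; the complementary ``fast'' iterations (those where $\epsilon$ more than halves) can occur only $O(\log(\epsilon_0/\epsilon))$ times and are of lower order, giving the advertised $\epsilon$-independent bound.

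The part I expect to require the most care is the $\theta>1/2$ regime, because the recursion there has two qualitatively different modes and one must argue that the ``slow'' phase dominates the iteration count in order to extract an $\epsilon$-independent bound. The $\theta=1/2$ subcase reduces to a single arithmetic step, while the $\theta<1/2$ subcase is a direct application of the classical sublinear-rate lemma to the key recursion.
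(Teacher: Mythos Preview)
Your approach is correct but takes a genuinely different route from the paper. The paper argues stage-wise: it conceptually partitions the PG run into blocks, applies the black-box bound of Proposition~\ref{prop:pg} to each block, uses HEB to control the starting distance $D(\x_{k-1},\Omega_*)\le c\,\epsilon_{k-1}^\theta$, sets the block length $t_k=\lceil c^2L\epsilon_{k-1}^{2\theta-1}\rceil$ so that the gap halves per block, and then sums the $t_k$ (a convergent, constant, or growing geometric series according as $\theta>1/2$, $\theta=1/2$, $\theta<1/2$). You instead derive a one-step recursion directly from the descent inequality and a PL-type lower bound on $\|G(\x_\tau)\|_2$ (via the subgradient $s_\tau\in\partial F(\x_{\tau+1})$ and HEB), then unroll that scalar recursion case by case. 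The paper's route is more modular---it recycles Proposition~\ref{prop:pg} as is, and the same template carries over verbatim to rAPG in Theorem~\ref{thm:rAPG}---while yours is more elementary and exposes the per-step mechanism. One small wobble: in your $\theta>1/2$ case, the $O(\log(\epsilon_0/\epsilon))$ ``fast'' iterations are not literally ``of lower order'' than an $\epsilon$-independent constant; but the paper's proof shares exactly this looseness, since the ceilings in $t_k=\lceil c^2L\epsilon_{k-1}^{2\theta-1}\rceil$ contribute an unaccounted $O(K)=O(\log(\epsilon_0/\epsilon))$ term that is silently absorbed when it writes $\sum_k t_k\le O(c^2L\sum_k\epsilon_{k-1}^{2\theta-1})$.
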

\begin{proof}
	Divide the whole FOR loop of the Algorithm \ref{alg:1} into $K$ stages, denote $t_k$ by the number of iterations in the $k$-th stage, and denote $\x_k$ by the updated $\x$ at the end of the $k$-th stage, where $k=1,\ldots K$. Define $\epsilon_k:=\frac{\epsilon_0}{2^k}$.
	
	Choose $t_k=\lceil{c^2L\epsilon_{k-1}^{2\theta-1}}\rceil$, and we will prove $F(\x_k)-F_*\leq\epsilon_k$ by induction. Suppose $F(\x_{k-1})-F_*\leq\epsilon_{k-1}$, we have $\x_{k-1}\in\S_{\epsilon_0}$. According to Proposition \ref{prop:pg}, at the $k$-th stage, we have
	$$F(\x_k)-F_*\leq\frac{L\Vert \x_{k-1}-\x_{k-1}^{*}\Vert_2^2}{2t_k},$$
	where $\x_{k-1}^{*}\in\Omega_{*}$, the closest point to $\x_{k-1}$ in the optimal set. By the HEB condition, we have
	$$F(\x_k)-F_*\leq\frac{c^2L\epsilon_{k-1}^{2\theta}}{2t_k}.$$
	Since $t_k\geq c^2L\epsilon_{k-1}^{2\theta-1}$, we have $F(\x_k)-F_*\leq\epsilon_k$. The total number of iterations is
	$$\sum_{k=1}^{K}t_k\leq O(c^2L\sum_{k=1}^{K}\epsilon_{k-1}^{2\theta-1}).$$
	From the above analysis, we see that after each stage, the optimality gap decreases by half, so taking $K=\lceil \log_2{\frac{\epsilon_0}{\epsilon}}\rceil$ guarantees $F(\x_k)-F_*\leq\epsilon$.
	
	If $\theta>1/2$, the iteration complexity is $O(c^2L\epsilon_0^{2\theta-1})$. If $\theta=1/2$, the iteration complexity is $O(c^2L\log{\frac{\epsilon_0}{\epsilon}})$. If $\theta<1/2$, the iteration complexity is 
	$$\sum_{k=1}^{K}t_k\leq O(c^2 L\sum_{k=1}^{K}(\frac{\epsilon_0}{2^{k-1}})^{2\theta-1})=O(c^2L/\epsilon^{1-2\theta}).$$
\end{proof}

\begin{algorithm}[t]
	\caption{restarting APG (rAPG)} \label{alg:2}
	\begin{algorithmic}[1]
		\STATE \textbf{Input}: the number of stages $K$ and $\x_0 \in\Omega$
		\FOR{$k=1,\ldots, K$}
		\STATE Set $\y^k_{1} = \x_{k-1}$ and $\x^k_1 = \x_{k-1}$
		\FOR{$\tau = 1, \ldots, t_k$}
		\STATE Update $\x^k_{\tau+1} =P_{g/L}(\y^k_{\tau} - \nabla f(\y^k_\tau)/L) $\\
		\STATE Update $\y^k_{\tau+1} = \x^k_{\tau+1} + \frac{\tau }{\tau+3 }(\x^k_{\tau+1} - \x^k_{\tau})$
		\ENDFOR
		\STATE Let $\x_k = \x^k_{t_k + 1}$
		\STATE Update  $t_k$
		\ENDFOR
		\STATE \textbf{Output}:  $\x_K$
	\end{algorithmic}
\end{algorithm}

Next, we show that APG can be made adaptive to HEB by periodically restarting given $c$ and $\theta$. This is similar to~\citep{DBLP:journals/corr/nesterov16linearnon} under the QGC. 
The steps of restarting APG (rAPG) are presented in Algorithm~\ref{alg:2}, where we employ the simplest variant of APG. 
\begin{thm}\label{thm:rAPG}
	Suppose $F(\x_0) - F_*\leq \epsilon_0$ and $F(\x)$ satisfies HEB on $\S_{\epsilon_0}$.  By running Algorithm~\ref{alg:1} with $K = \lceil \log_2\frac{\epsilon_0}{\epsilon}\rceil$ and $t_k=\lceil 2c\sqrt{L}\epsilon_{k-1}^{\theta -1/2} \rceil$, we have $F(\x_K) - F_* \leq \epsilon$. 
	The iteration complexity of rAPG is $O(c\sqrt{L}\epsilon_0^{1/2-\theta})$ if $\theta>1/2$, and if $\theta\leq 1/2$ it is $O(\max\{\frac{c\sqrt{L}}{\epsilon^{1/2 - \theta}}, c\sqrt{L}\log(\frac{\epsilon_0}{\epsilon})\})$. 
\end{thm}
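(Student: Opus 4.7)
The plan is to mirror the proof of Theorem~\ref{thm:1} but replace the per-stage PG bound (Proposition~\ref{prop:pg}) by the accelerated bound (Proposition~\ref{prop:apg}). Substituting $L/(2t_k)$ by $2L/(t_k+1)^2$ is precisely what converts $c^2L$ into $c\sqrt{L}$ and halves the $\theta$-exponents in the resulting complexity.

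First I would partition the outer loop of Algorithm~\ref{alg:2} into $K=\lceil\log_2(\epsilon_0/\epsilon)\rceil$ stages, set the target gaps $\epsilon_k := \epsilon_0/2^k$, and prove $F(\x_k) - F_* \leq \epsilon_k$ by induction. The base case is the hypothesis. For the inductive step, the hypothesis $F(\x_{k-1}) - F_* \leq \epsilon_{k-1} \leq \epsilon_0$ places $\x_{k-1}$ in $\mathcal S_{\epsilon_0}$, so HEB yields $\|\x_{k-1} - \x_{k-1}^*\|_2 \leq c\,\epsilon_{k-1}^\theta$ for a closest optimum $\x_{k-1}^*\in\Omega_*$. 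The $k$-th restart is just plain APG initialized at $\x_{k-1}$ with $\eta=1/L$ for $t_k$ inner steps, so Proposition~\ref{prop:apg} gives
\[
F(\x_k) - F_* \;\leq\; \frac{2L\,\|\x_{k-1} - \x_{k-1}^*\|_2^2}{(t_k+1)^2} \;\leq\; \frac{2Lc^2\,\epsilon_{k-1}^{2\theta}}{(t_k+1)^2}.
\]
Forcing the right-hand side to be at most $\epsilon_k = \epsilon_{k-1}/2$ is equivalent to $(t_k+1)^2 \geq 4Lc^2\,\epsilon_{k-1}^{2\theta-1}$, which is satisfied by the prescribed $t_k = \lceil 2c\sqrt{L}\,\epsilon_{k-1}^{\theta - 1/2}\rceil$, completing the induction and hence $F(\x_K)-F_*\leq \epsilon_K\leq \epsilon$.

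The complexity then comes from summing $\sum_{k=1}^K t_k \leq 2c\sqrt{L}\sum_{k=1}^K \epsilon_{k-1}^{\theta-1/2} + K$, split by the sign of $\theta-1/2$. When $\theta>1/2$ the terms form a convergent geometric series dominated by $k=1$, yielding the $\epsilon$-independent rate $O(c\sqrt{L}\,\epsilon_0^{\theta-1/2})$. When $\theta=1/2$ each stage contributes a constant and there are $K=O(\log(\epsilon_0/\epsilon))$ of them. When $\theta<1/2$ the series grows geometrically and is dominated by its last term, where $\epsilon_{K-1}$ is of order $\epsilon$, producing the $1/\epsilon^{1/2-\theta}$ factor; the additive $K$ coming from the ceilings contributes the $\log(\epsilon_0/\epsilon)$ term inside the $\max\{\cdot,\cdot\}$.

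The main obstacle I anticipate is purely administrative: propagating the ceiling in $t_k$ and the off-by-one $(t_k+1)^2$ through the induction so that the per-stage halving is preserved exactly, and collapsing the three $\theta$-regimes into the unified $\max$ form of the statement. Once the per-stage recursion above is in place, the only analytic inputs needed are the HEB condition and Proposition~\ref{prop:apg}; everything else is summing a geometric series.
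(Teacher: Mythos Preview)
Your proposal is correct and follows essentially the same route as the paper's own proof: induction on the stage-wise gap $\epsilon_k=\epsilon_0/2^k$, the HEB bound $\|\x_{k-1}-\x_{k-1}^*\|_2\le c\,\epsilon_{k-1}^{\theta}$ combined with Proposition~\ref{prop:apg} to obtain $F(\x_k)-F_*\le 2Lc^2\epsilon_{k-1}^{2\theta}/(t_k+1)^2$, and then the same geometric-series summation split by the sign of $\theta-1/2$. Your handling of the ceilings and the resulting $\log(\epsilon_0/\epsilon)$ term is slightly more explicit than the paper's, but the argument is otherwise identical.
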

\begin{proof}
	Similar to the proof of Theorem \ref{thm:1}, we will prove by induction that $F(\x_k) - F_*\leq \epsilon_k \triangleq \frac{\epsilon_0}{2^k}$. Assume that $F(\x_{k-1}) - F_*\leq \epsilon_{k-1}$. Hence,  $\x_{k-1}\in\S_{\epsilon_0}$. Then according to Proposition \ref{prop:apg} and the HEB condition, we have
	\begin{align*}
	F(\x_k ) - F_* \leq  \frac{2c^2L\epsilon_{k-1}^{2\theta}}{ (t_k + 1)^2}.
	\end{align*}
	Since $t_k\geq   2 c\sqrt{L}\epsilon_{k-1}^{\theta -1/2}$,  we have
	\begin{align*}
	F(\x_k) - F_* \leq \frac{\epsilon_{k-1}}{2}= \epsilon_k.
	\end{align*}
	After $K$ stages, we have $F(\x_K) - F_*\leq \epsilon$. 
	The total number of iterations is 
	\begin{align*}
	T_K=\sum_{k=1}^Kt_k  \leq  O(c\sqrt{L}\epsilon_{k-1}^{\theta -1/2}).
	\end{align*}
	When $\theta> 1/2$, we have $T_K \leq O(c\sqrt{L}\epsilon_0^{\theta - 1/2})$. When $\theta\leq 1/2$, we have
	\[
	T_K \leq O\left(\max\{c\sqrt{L}\log(\epsilon_0/\epsilon), c\sqrt{L}/\epsilon^{1/2-\theta}\}\right).
	\]
\end{proof}


From Algorithm~\ref{alg:2}, we can see that rAPG requires the knowledge of $c$ besides $\theta$ to restart APG. However, for many problems of interest, the value of $c$ is unknown, which makes rAPG impractical. To address this issue, we propose to use the magnitude of the proximal gradient as a measure for restart and termination. Previous work~\citep{opac-b1104789} have considered the strongly convex optimization problems where the strong convexity parameter is unknown, where they also use the magnitude of the proximal gradient as a measure for restart and termination. However, in order to achieve faster convergence under the HEB condition without the strong convexity, we have to introduce a novel technique of adaptive regularization that adapts to the HEB. With a novel synthesis of the adaptive regularization and a conditional restarting that searchs for the $c$, we are able  to develop practical adaptive  accelerated gradient methods.

Before diving into the details of the proposed algorithm, we will first present a variant of PG as a baseline for comparison motivated by~\citep{nesterov8812} for smooth problems, which enjoys a faster convergence than the vanilla PG in terms of the proximal gradient's norm. The idea is to return a solution that achieves the minimum magnitude of the proximal gradient (the option II in Algorithm~\ref{alg:1}). 
The convergence of $\min_{1\leq \tau\leq t}\|G(\x_\tau)\|_2$ under HEB is presented in the following theorem.

\begin{thm}\label{thm:3}
	Suppose $F(\x_0) - F_*\leq \epsilon_0$ and $F(\x)$ satisfies HEB on $\S_{\epsilon_0}$. The iteration complexity of PG (with option II) for achiving  $\min_{1\leq \tau\leq t}\|G(\x_\tau)\|_2 \leq \epsilon$,  is $O(c^{\frac{1}{1-\theta}}L\max\{1/\epsilon^{\frac{1 - 2\theta}{1-\theta}}, \log(\frac{\epsilon_0}{\epsilon})\})$ if $\theta\leq 1/2$, and is $O(c^2L\epsilon_0^{2\theta -1})$ if $\theta>1/2$. 
\end{thm}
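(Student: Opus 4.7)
The plan is to analyze PG with Option II by a two-phase argument: a first phase in which Theorem~\ref{thm:1} drives the objective value down to some intermediate level $\epsilon_1$, and a second phase in which the telescoping bound from Corollary~\ref{cor:1} converts that objective reduction into a bound on the smallest proximal gradient encountered. Concretely, write $t = t_1 + t_2$. By Theorem~\ref{thm:1} applied to the first $t_1$ iterations, we reach an iterate $\x_{t_1+1}$ with $F(\x_{t_1+1}) - F_* \leq \epsilon_1$, where for $\theta \leq 1/2$ we may take $t_1 = O\!\left(\max\{c^2 L/\epsilon_1^{1-2\theta},\, c^2 L \log(\epsilon_0/\epsilon_1)\}\right)$. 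Since PG with $\eta = 1/L$ is monotone in $F$ (Proposition~\ref{lem:2}), all subsequent iterates remain in $\S_{\epsilon_0}$, so HEB continues to apply.

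Next I would telescope Corollary~\ref{cor:1} over the second phase: for each $\tau$ in $[t_1+1,\, t_1+t_2]$ we have $\tfrac{1}{2L}\|G(\x_\tau)\|_2^2 \leq F(\x_\tau) - F(\x_{\tau+1})$, so
\[
\sum_{\tau = t_1+1}^{t_1+t_2} \|G(\x_\tau)\|_2^2 \;\leq\; 2L\bigl(F(\x_{t_1+1}) - F(\x_{t_1+t_2+1})\bigr) \;\leq\; 2L\,\epsilon_1,
\]
hence $\min_{t_1 < \tau \leq t_1 + t_2}\|G(\x_\tau)\|_2^2 \leq 2L\epsilon_1/t_2$. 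To ensure $\min_\tau \|G(\x_\tau)\|_2 \leq \epsilon$ it suffices to choose $t_2 \geq 2L\epsilon_1/\epsilon^2$. Option~II returns the iterate with the smallest proximal gradient over all $t$ steps, so the same bound applies to its output.

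Finally I would optimize $\epsilon_1$ to balance $t_1$ and $t_2$. For $\theta < 1/2$, setting $c^2 L/\epsilon_1^{1-2\theta} \asymp L\epsilon_1/\epsilon^2$ yields $\epsilon_1 = c^{1/(1-\theta)} \epsilon^{1/(1-\theta)}$, and substitution gives $t_1 + t_2 = O\!\left(c^{1/(1-\theta)} L/\epsilon^{(1-2\theta)/(1-\theta)}\right)$ (using $2 - (1-2\theta)/(1-\theta) = 1/(1-\theta)$). For $\theta = 1/2$, picking $\epsilon_1$ of order $\epsilon^2$ makes $t_2 = O(c^2 L)$ while $t_1 = O(c^2 L \log(\epsilon_0/\epsilon))$, matching the $\log$ branch of the claimed bound. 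For $\theta > 1/2$, Theorem~\ref{thm:1} shows that any target $\epsilon_1$ is reached within $O(c^2 L \epsilon_0^{2\theta-1})$ iterations, so we may drive $\epsilon_1$ down to $\epsilon^2/L$ (making $t_2 = O(1)$) while keeping $t_1 = O(c^2 L \epsilon_0^{2\theta-1})$.

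The main obstacle is really just the bookkeeping in the third paragraph: getting the exponent algebra right across the three regimes of $\theta$, and verifying that the $\log(\epsilon_0/\epsilon)$ term in the $\theta = 1/2$ case dominates cleanly after substituting the optimized $\epsilon_1$. Beyond that, the argument is mechanical, because the core inequality $\sum_\tau \|G(\x_\tau)\|_2^2 \leq 2L(F(\x_{t_1+1}) - F_*)$ needs no HEB assumption in the second phase; HEB is used only implicitly, through Theorem~\ref{thm:1}, to make $\epsilon_1$ small at a favorable rate.
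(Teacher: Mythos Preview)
Your proposal is correct and follows essentially the same approach as the paper: both arguments split the run into a first phase in which Theorem~\ref{thm:1} drives the objective to an intermediate level, and a second phase in which the telescoped inequality from Corollary~\ref{cor:1} converts the remaining objective gap into a bound on the minimum proximal gradient norm, followed by optimizing the intermediate level. The only cosmetic difference is that the paper fixes $t_1 = t_2 = j$ (writing $t = 2j$ and summing from $\tau = j$ to $t$), whereas you allow $t_1$ and $t_2$ to differ and then balance them, arriving at the same choice $\epsilon_1 = c^{1/(1-\theta)}\epsilon^{1/(1-\theta)}$ for $\theta < 1/2$ and the same final complexities.
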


\begin{proof}
	By the update of Algorithm \ref{alg:1} with option II and Corollary \ref{cor:1}, we have
	 \begin{align*}
	F(\x_\tau) - F(\x_{\tau+1})\geq \frac{1}{2L}\|G(\x_\tau)\|_2^2.
	\end{align*}
	Let $t=2j$. Summing over $\tau =j,\ldots, t $ gives
	\begin{align*}
	F(\x_{j}) - F(\x_{t+1})\geq \frac{1}{2L}\sum_{\tau={j}}^{t}\|G(\x_\tau)\|_2^2.
	\end{align*}
	Since $\|G(\x_\tau)\|_2\geq \min_{1\leq \tau\leq t}\|G(\x_\tau)\|_2$ and $F(\x_{t+1})\geq F_*$,  we have 
	$$ \frac{j}{2L}\min_{1\leq \tau\leq t}\|G(\x_\tau)\|_2^2\leq F(\x_{j}) - F_*.$$
	Hence, 
	\begin{align}
	\label{proofThm3}
	\min_{1\leq \tau\leq t}\|G(\x_\tau)\|_2^2\leq\frac{2L}{j}(F(\x_{j}) - F_*).
	\end{align}
	We consider three scenarios of $\theta$. \\

	(I). If $\theta>1/2$, according to Theorem \ref{thm:1}, we know that $F(\x_j)-F_*$ converges to $0$ in $j=O(c^2L\epsilon_0^{2\theta-1})$ steps, so $\min_{1\leq \tau\leq t}\|G(\x_\tau)\|_2^2$ converges to $0$ in $t=O(c^2L\epsilon_0^{2\theta-1})$ steps.\\
	
	(II). If $\theta=1/2$, 
	let $j=\max(k,2L)$ and $t=2j$,
	where $k=ac^2L\log{(\frac{\epsilon_0}{\epsilon^2})}$, and $a$ is a constant hided in the big O notation. According to Theorem 1, we have
	\begin{equation}
	\label{proofThm3_2}
	F(\x_k)-F_*\leq\epsilon^2,
	\end{equation}
	then the inequality~(\ref{proofThm3}), (\ref{proofThm3_2}) and the choice of $j,k$ yields
	\begin{align*}
	\min_{1\leq \tau\leq t}\|G(\x_\tau)\|_2^2\leq\frac{2L}{j}(F(\x_{j}) - F_*)\leq\epsilon^2,
	\end{align*}
	so we know that $t=O(c^2L\log{(\frac{\epsilon_0}{\epsilon}}))$.\\
	
	(III). If $\theta<1/2$,  let $j$ be an index such that $F(\x_j) - F_*\leq \epsilon'$. We can set  $j = 2ac^2L/{\epsilon'}^{1-2\theta}$ and hence $t =  4ac^2L/{\epsilon'}^{1-2\theta}$, and have
	\begin{align*}
	\min_{1\leq \tau\leq t}\|G(\x_\tau)\|_2^2\leq\frac{2L}{j}(F(\x_{j}) - F_*)\leq \frac{\epsilon' {\epsilon'}^{1-2\theta}}{ac^2} = \frac{{\epsilon'}^{2-2\theta}}{ac^2}.
	\end{align*}
	Let $\epsilon' = c^{\frac{1}{1-\theta}}\epsilon^{\frac{1}{(1-\theta)}}$, we have $\min_{1\leq \tau\leq t}\|G(\x_\tau)\|^2_2\leq \epsilon^2/a$. We can conclude $t = O(c^{\frac{1}{1-\theta}}L/\epsilon^{\frac{1-2\theta}{1-\theta}})$.\\

	By combining the three scenarios, we can complete the proof. 
\end{proof}

The final theorem  in this section summarizes an $o(1/t)$ convergence result of PG  for minimizing a proper, coercive, convex, lower semicontinuous and semi-algebraic function, which could be interesting of its own. 
\begin{thm}\label{thm:pgso}
	Let $F(\x)$ be a proper, coercive, convex, lower semicontinuous and semi-algebraic functions. Then PG  (with option I and option II) converges at a speed of $o(1/t)$ for $F(\x) - F_*$ and $G(\x)$, respectively, where $t$ is the total number of iterations. 
\end{thm}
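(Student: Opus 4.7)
The plan is to combine Proposition~\ref{prop:5} with the iteration-complexity bounds of Theorems~\ref{thm:1} and~\ref{thm:3}, and then simply invert those bounds to read off pointwise rates in $t$. Since $F$ is proper, coercive, convex, lower semicontinuous, and semi-algebraic, the initial sublevel set $\S_{\epsilon_0}$ with $\epsilon_0 := F(\x_0) - F_*$ is compact, and Proposition~\ref{prop:5} supplies constants $\theta \in (0,1]$ and $c \in (0,\infty)$ for which HEB holds on $\S_{\epsilon_0}$. Because PG is a descent method --- applying Proposition~\ref{lem:2} with $\x = \y$ gives $F(\x_{\tau+1}) \leq F(\x_\tau) - \tfrac{1}{2L}\|G(\x_\tau)\|_2^2$ --- every iterate remains in $\S_{\epsilon_0}$, so a single pair $(c,\theta)$ governs the entire run.

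Next I would split into the three HEB regimes used in Theorems~\ref{thm:1} and~\ref{thm:3}. If $\theta > 1/2$, both theorems give a complexity bound that is independent of $\epsilon$, so PG reaches the optimum in finitely many steps and both quantities vanish eventually; the $o(1/t)$ claim is then trivial. If $\theta = 1/2$, the complexity is $O(c^2 L \log(\epsilon_0/\epsilon))$, which inverts to $F(\x_t) - F_* \leq \epsilon_0 \exp(-t/(c^2 L))$ for option~I and an analogous exponential bound on $\min_{1 \leq \tau \leq t}\|G(\x_\tau)\|_2$ for option~II; both decay faster than any polynomial and in particular are $o(1/t)$. In the remaining case $\theta \in (0, 1/2)$, Theorem~\ref{thm:1} gives complexity $O(c^2 L / \epsilon^{1-2\theta})$, which inverts to $F(\x_t) - F_* \leq O(t^{-1/(1-2\theta)})$, and Theorem~\ref{thm:3} gives $\min_{1 \leq \tau \leq t}\|G(\x_\tau)\|_2 \leq O(t^{-(1-\theta)/(1-2\theta)})$. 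For any $\theta \in (0, 1/2)$ both exponents $1/(1-2\theta)$ and $(1-\theta)/(1-2\theta)$ strictly exceed $1$, so multiplying by $t$ gives a bound that tends to $0$, establishing the $o(1/t)$ rate for both quantities.

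I do not anticipate a serious obstacle: the argument is essentially an assembly of existing pieces, with the HEB constants coming from Proposition~\ref{prop:5} and the rates coming from the two earlier theorems. The only point that requires care is verifying that both inverted exponents are strictly greater than $1$ for every admissible $\theta > 0$ --- this is a one-line computation, but it is precisely this strict inequality (inherited from $\theta > 0$ in Proposition~\ref{prop:5}) that upgrades the generic $O(1/t)$ rate of Proposition~\ref{prop:pg} to the promised $o(1/t)$ behaviour.
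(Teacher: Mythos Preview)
Your proposal is correct and follows exactly the approach the paper indicates: the paper's own proof is a one-line remark that the result follows by combining Proposition~\ref{prop:5} with Theorems~\ref{thm:1} and~\ref{thm:3}, and you have faithfully carried out that combination, including the inversion of the complexity bounds and the check that the resulting exponents strictly exceed~$1$ for every $\theta\in(0,1]$.
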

{\bf Remark:} This can be easily proved by combining Proposition~\ref{prop:5} and Theorems~\ref{thm:1},~\ref{thm:3}.

\section{Adaptive Accelerated Gradient Converging Methods for Smooth Optimization}
In the following two sections, we will present adaptive accelerated gradient converging methods that are faster than minPG for the convergence of (proximal) gradient's norm. Due to its simplicity, we first consider the following unconstrained optimization problem: 
\begin{align*}
\min_{\x\in\R^d}f(\x)
\end{align*}
where $f(\x)$ is a $L$-smooth function. We abuse $\Omega_*$ to denote the optimal set of above problem.  
The lemma below that bounds the distance of a point to the optimal set by a function of the gradient's norm.  
\begin{lemma}\label{lem:4}
	If $f(\x)$ satisfies the HEB on $\x\in\S_{\xi}$ with $\theta\in(0,1]$, i.e., there exists $c>0$ such that for any $\x\in\S_{\xi}$ we have
	\[
	D(\x, \Omega_*)\leq c(f(\x) - f_*)^\theta.
	\]
	If $\theta\in(0,1)$, then for any $\x\in\S_\xi$
	\begin{align*}
	D(\x, \Omega_*)\leq c^{\frac{1}{1-\theta}}\|\partial  f(\x)\|_2^{\frac{\theta}{1-\theta}}.
	\end{align*}
	If $\theta=1$, then for any $\x\in\S_\xi$
	\begin{align*}
	D(\x, \Omega_*)\leq c^2\xi\|\partial  f(\x)\|_2.
	\end{align*}
\end{lemma}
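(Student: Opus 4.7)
The plan is to start from the first-order optimality inequality supplied by convexity and pair it with Cauchy–Schwarz to convert the objective gap $f(\x) - f_*$ into something that involves $\|\nabla f(\x)\|_2$ and $D(\x,\Omega_*)$, after which the HEB condition can be iterated to eliminate one of these quantities. Throughout I can identify $\|\partial f(\x)\|_2$ with $\|\nabla f(\x)\|_2$ since $f$ is differentiable.

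Concretely, fix $\x\in\S_\xi$ and let $\x_*\in\Omega_*$ be a closest optimal point, so $D(\x,\Omega_*) = \|\x-\x_*\|_2$. Convexity gives
\begin{equation*}
f(\x) - f_* \;\leq\; \nabla f(\x)^{\top}(\x-\x_*) \;\leq\; \|\nabla f(\x)\|_2\, D(\x,\Omega_*),
\end{equation*}
and the HEB hypothesis gives $D(\x,\Omega_*)\leq c(f(\x)-f_*)^{\theta}$. These are the two ingredients I will combine.

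For the case $\theta\in(0,1)$, I would substitute the HEB bound into the convexity inequality to obtain $f(\x)-f_* \leq c\|\nabla f(\x)\|_2 (f(\x)-f_*)^{\theta}$, then divide by $(f(\x)-f_*)^{\theta}$ (the degenerate case $f(\x)=f_*$ gives $D=0$ and is trivial) to produce
\begin{equation*}
(f(\x)-f_*)^{1-\theta} \;\leq\; c\|\nabla f(\x)\|_2.
\end{equation*}
Raising to the power $\theta/(1-\theta)$, multiplying by $c$, and reapplying HEB then yields
\begin{equation*}
D(\x,\Omega_*) \;\leq\; c(f(\x)-f_*)^{\theta} \;\leq\; c\cdot c^{\theta/(1-\theta)}\|\nabla f(\x)\|_2^{\theta/(1-\theta)} \;=\; c^{1/(1-\theta)}\|\nabla f(\x)\|_2^{\theta/(1-\theta)}.
\end{equation*}

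The case $\theta=1$ is the one that needs a different twist, since the same division trick degenerates into the $\x$-free inequality $1\leq c\|\nabla f(\x)\|_2$. This is precisely what I will exploit: whenever $f(\x)>f_*$, combining convexity with HEB yields $\|\nabla f(\x)\|_2\geq 1/c$, i.e.\ a uniform lower bound on the gradient norm. Using HEB once and the sublevel-set assumption then gives $D(\x,\Omega_*)\leq c(f(\x)-f_*)\leq c\xi$, and multiplying by the tautology $1\leq c\|\nabla f(\x)\|_2$ furnishes
\begin{equation*}
D(\x,\Omega_*)\;\leq\; c\xi \;\leq\; c\xi\cdot c\|\nabla f(\x)\|_2 \;=\; c^2\xi\|\nabla f(\x)\|_2,
\end{equation*}
which is exactly the claimed bound. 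The main subtlety is recognizing that when $\theta=1$ the algebraic manipulation no longer yields a growth rate in $\|\nabla f\|_2$, and one must instead leverage the gradient lower bound together with the diameter estimate $c\xi$; everything else is routine manipulation.
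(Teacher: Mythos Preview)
Your proof is correct and follows the same overall structure as the paper: both derive the key intermediate inequality $c\|\partial f(\x)\|_2 \geq (f(\x)-f_*)^{1-\theta}$ and then feed it back into the HEB bound, handling $\theta=1$ via the resulting gradient lower bound $\|\partial f(\x)\|_2\geq 1/c$ combined with $f(\x)-f_*\leq\xi$. The one difference is in how that intermediate inequality is obtained: the paper invokes an abstract KL-type proposition (from Bolte et al.) with the desingularizing function $\varphi(s)=cs^{\theta}$, whereas you obtain it directly from the convexity inequality $f(\x)-f_*\leq \|\partial f(\x)\|_2\,D(\x,\Omega_*)$ plus HEB. Your route is more elementary and self-contained; the paper's route has the advantage of situating the inequality within the broader HEB/KL equivalence framework, but for this specific lemma the direct argument you give is cleaner.
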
 
The proof of this lemma is included in the Appendix.

Note that for a smooth function $f(\x)$, we can restrict our discussion on HEB condition to $\theta\leq 1/2$. Since $f(\x) - f(\x_*)\leq \frac{L}{2}\|\x- \x_*\|_2^2$ where $\x_*\in\Omega_*$, plugging this equality into the HEB we can see $\theta$ has to be less than $1/2$ if $c$ remains a constant. 
 In order to derive faster convergence than minPG, we employ the technique of regularization, i.e., adding a strongly convex regularizer into the objective. To this end, we define the following problem: 
\begin{align*}
f_\delta(\x) = f(\x) + \frac{\delta}{2}\|\x - \x_0\|_2^2,
\end{align*}
where $\x_0$ is the initial solution. It is clear that $f_\delta(\x)$ is a $(L+\delta)$-smooth and $\delta$-strongly convex function.  The proposed adaAGC algorithm will run in multiple stages. At the $k$-th stage, we construct a problem like above using a value of $\delta_{k}$ and an initial solution $\x_{k-1}$, and employ APG for smooth and strongly convex minimization to solve the constructed problem until the gradient's norm is decreased by a factor of $2$. The initial solution for each stage is the output solution of the previous  stage and the value of $\delta$ will be adaptively decreasing based on $\theta$ in the HEB condition. Specifically, the choice of $\delta_k$ can be set in the following way:
\begin{equation}
\label{delta_smooth}
	\delta_k =
	 \frac{\epsilon_{k-1}^{\frac{1-2\theta}{1-\theta}}}{6c_e^{1/(1-\theta)}}
\end{equation}
 We also embed a search procedure for the value of $c$ into the algorithm in order to leverage the HEB condition. The detailed steps of adaAGC for solving $\min_{\x}f(\x)$ are presented in Algorithm~\ref{alg:1-4} assuming $f(\x)$ satisfies a HEB condition. 

\begin{algorithm}[t]
\caption{adaAGC for solving~(\ref{eqn:smt}) } \label{alg:1-4}
\begin{algorithmic}[1]
\STATE \textbf{Input}: $\x_0\in\Omega$ and $c_0$ and $\gamma>1$
\STATE Let $c_e=c_0$ and $\epsilon_0 = \|\nabla f(\x_0)\|_2$, 
\FOR{$k=1,\ldots, K$}
   \STATE Let $\delta_k$ be given in (\ref{delta_smooth}) and $f_{\delta_k}(\x)  =f(\x) + \frac{\delta_k}{2}\|\x - \x_{k-1}\|_2^2$
   \STATE $\x^k_{1} = \x_{k-1}$ and $\y_1 = \x_{k-1}$
  \FOR{$s=1, \ldots$}
    \FOR{$\tau = 1, \ldots$}
            \STATE $\x^k_{\tau+1} = \y_\tau - \frac{1}{L+\delta_k}\nabla f_{\delta_k}(\y_\tau)$
        \STATE $\y_{\tau+1} = \x^k_{\tau+1} + \frac{\sqrt{L+\delta_k} - \sqrt{\delta_k}}{\sqrt{L + \delta_k} + \sqrt{\delta_k}} (\x^k_{\tau+1} - \x^k_{\tau})$
        \IF{$\|\nabla  f(\x^k_{\tau+1})\|_2\leq \epsilon_{k-1}/2$ }
        \STATE let $\x_k =\x^k_{\tau+1}$ and $\epsilon_k = \epsilon_{k-1}/2$.
        \STATE break the two enclosing for loops 
        \ELSIF{$\tau=\left\lceil  2\sqrt{\frac{L+\delta_k}{\delta_k}}\log\frac{\sqrt{L(L+\delta_k)}}{\delta_k}\right\rceil$}
        \STATE let $c_e = \gamma c_e$ and break the enclosing for loop
        \ENDIF
    
    \ENDFOR
    \ENDFOR
\ENDFOR
\STATE \textbf{Output}:  $\x_K$
\end{algorithmic}
\end{algorithm}

Below, we first present the analysis for each stage to pave the path of proof for our main theorem. 
\begin{thm}\label{thm:on}
Suppose $f(\x)$ is $L$-smooth. By running the update in~(\ref{eqn:APG}) for solving $f_\delta(\x) = f(\x) + \frac{\delta}{2}\|\x - \x_0\|_2^2$ with $\beta = \frac{\sqrt{L+\delta} - \sqrt{\delta}}{\sqrt{L+\delta} + \sqrt{\delta}}$ and an initial solution $\x_0$, we have for any $\x\in\R^d$
\begin{align*}
f_\delta(\x_{t+1}) - f_\delta(\x)\leq \left(1 - \sqrt{\frac{\delta}{L+\delta}}\right)^t\left[f(\x_0) - f(\x) \right],
\end{align*}
and $f(\x_{t+1}) \leq f(\x_0)$. 
If $t\geq \sqrt{\frac{L+\delta}{\delta}}\log\left(\frac{L}{\delta}\right)$, we have
\begin{align*}
\|\x_{t+1} - \x_0\|_2 \leq \sqrt{2}\|\x_0- \x_*\|_2.
\end{align*}
\end{thm}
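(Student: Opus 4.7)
The plan is to deduce all three claims from a single invocation of Proposition~\ref{prop:3} applied to $f_\delta$. Since $f_\delta$ is $(L+\delta)$-smooth and $\delta$-strongly convex, Proposition~\ref{prop:3} with initial point $\x_0$ yields, for any $\x$,
\[
f_\delta(\x_{t+1}) - f_\delta(\x) \leq \left(1 - \sqrt{\tfrac{\delta}{L+\delta}}\right)^t \bigl[ f_\delta(\x_0) - f_\delta(\x) + \tfrac{\delta}{2}\|\x_0 - \x\|_2^2\bigr].
\]
Because the regularizer is centered at $\x_0$, we have $f_\delta(\x_0) = f(\x_0)$, while $f_\delta(\x) = f(\x) + \tfrac{\delta}{2}\|\x-\x_0\|_2^2$, so the bracketed quantity collapses to $f(\x_0) - f(\x)$. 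This is exactly the first displayed inequality. Setting $\x = \x_0$ there gives $f_\delta(\x_{t+1}) \leq f(\x_0)$, and dropping the nonnegative quadratic term in $f_\delta(\x_{t+1}) = f(\x_{t+1}) + \tfrac{\delta}{2}\|\x_{t+1}-\x_0\|_2^2$ yields $f(\x_{t+1}) \leq f(\x_0)$.

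For the last claim I expect the main subtlety: the naive route of going through $\x^\delta_* = \arg\min f_\delta$ via the triangle inequality loses a factor and only gives $2\|\x_0 - \x_*\|_2$. To recover $\sqrt{2}$, the plan is to apply the first inequality with $\x = \x_*$, a closest point in $\Omega_*$ to $\x_0$, and bound $\|\x_{t+1}-\x_0\|_2^2$ directly. Writing $r = 1 - \sqrt{\delta/(L+\delta)}$, expanding $f_\delta(\x_{t+1})$ and $f_\delta(\x_*)$, and using $f(\x_{t+1}) \geq f_*$ to drop one term gives
\[
\tfrac{\delta}{2}\|\x_{t+1} - \x_0\|_2^2 \leq \tfrac{\delta}{2}\|\x_* - \x_0\|_2^2 + r^t\bigl[f(\x_0) - f_*\bigr].
\]
Combining with the standard $L$-smoothness bound $f(\x_0) - f_* \leq \tfrac{L}{2}\|\x_0 - \x_*\|_2^2$ produces
\[
\|\x_{t+1} - \x_0\|_2^2 \leq \left(1 + \tfrac{L}{\delta}\, r^t\right)\|\x_0 - \x_*\|_2^2.
\]

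The final step is purely calculational: use $r^t \leq \exp(-t\sqrt{\delta/(L+\delta)})$, so that the prescribed $t \geq \sqrt{(L+\delta)/\delta}\log(L/\delta)$ drives $r^t \leq \delta/L$, making the parenthesized factor at most $2$. Taking square roots yields $\|\x_{t+1} - \x_0\|_2 \leq \sqrt{2}\|\x_0 - \x_*\|_2$. The one place to be careful is resisting the temptation to split via $\x^\delta_*$; directly exploiting the quadratic form of $f_\delta$ against $\x_*$ is what produces the sharper constant.
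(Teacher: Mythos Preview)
Your proof is correct and matches the paper's argument essentially step for step: both invoke Proposition~\ref{prop:3} for $f_\delta$, simplify the bracket using $f_\delta(\x_0)=f(\x_0)$, specialize to $\x=\x_0$ for the monotonicity claim, and then specialize to $\x=\x_*$ together with $f(\x_{t+1})\geq f_*$ and the smoothness bound $f(\x_0)-f_*\leq \tfrac{L}{2}\|\x_0-\x_*\|_2^2$ to obtain the $\sqrt{2}$ factor. Your added remark about why routing through $\x_\delta^*$ would lose the constant is a nice clarification but not a different approach.
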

\begin{proof}
By Proposition~\ref{prop:3}, we have 
\[
f_\delta(\x_{t+1}) - f_\delta(\x)\leq \left(1 - \sqrt{\frac{\delta}{L+\delta}}\right)^t\left[f_\delta(\x_0) - f_\delta(\x) + \frac{\delta}{2}\|\x - \x_0\|_2^2 \right].
\]
By noting the definition of $f_\delta(\x)$ we can prove the first inequality. To prove the second inequality, we let $\x=\x_0$ in the first inequality, we have
\begin{align*}
f(\x_{t+1}) + \frac{\delta}{2}\|\x_{t+1} - \x_0\|_2^2 - f(\x_0)\leq 0.
\end{align*}
Thus $f(\x_{t+1})\leq f(\x_0)$. To prove the third inequality, we let $\x=\x_*\in\Omega_*$ in the first inequality, we have
\begin{align*}
f(\x_{t+1}) + \frac{\delta}{2}\|\x_{t+1} - \x_0\|_2^2 - f(\x_*) -  \frac{\delta}{2}\|\x_0 - \x_*\|_2^2\leq \left(1 - \sqrt{\frac{\delta}{L+\delta}}\right)^t\left[f(\x_0) - f(\x_*)  \right].
\end{align*}
Then we have
\begin{align*}
\|\x_{t+1} - \x_0\|_2^2\leq  \|\x_0 - \x_*\|_2^2+\left(1 - \sqrt{\frac{\delta}{L+\delta}}\right)^t\frac{L}{\delta}\|\x_0 - \x_*\|_2^2.
\end{align*}
If $t\geq \sqrt{\frac{L+\delta}{\delta}}\log\left(\frac{L}{\delta}\right)$, we have
\begin{align*}
\|\x_{t+1} - \x_0\|_2^2&\leq  \|\x_0 - \x_*\|_2^2+\left(1 - \sqrt{\frac{\delta}{L+\delta}}\right)^t\frac{L}{\delta}\|\x_0 - \x_*\|_2^2\\
&\leq \|\x_0 - \x_*\|_2^2 + \|\x_0 - \x_*\|_2^2 = 2\|\x_0 - \x_*\|_2^2.
\end{align*}
\end{proof}
Next, we prove the following theorem. 
\begin{thm}\label{thm:ong}
Under the same condition as in Theorem~\ref{thm:on}, we have 
\[
\|\nabla f(\x_{t+1})\|_2\leq\sqrt{L(L+\delta)} \left(1 - \sqrt{\frac{\delta}{L+\delta}}\right)^{t/2}\|\x_0- \x_*\|_2 + \sqrt{2}\delta\|\x_0 - \x_*\|_2.
\]
\end{thm}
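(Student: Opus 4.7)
The plan is to decompose $\nabla f(\x_{t+1})$ into a piece coming from $\nabla f_\delta(\x_{t+1})$ and a regularization artifact, then bound each piece using Theorem~\ref{thm:on}. Since $f_\delta(\x) = f(\x) + \frac{\delta}{2}\|\x - \x_0\|_2^2$, we have $\nabla f(\x_{t+1}) = \nabla f_\delta(\x_{t+1}) - \delta(\x_{t+1} - \x_0)$, so by the triangle inequality
\[
\|\nabla f(\x_{t+1})\|_2 \;\leq\; \|\nabla f_\delta(\x_{t+1})\|_2 + \delta\,\|\x_{t+1} - \x_0\|_2.
\]
The second term is immediately controlled by the third inequality of Theorem~\ref{thm:on}, which (under the iteration-count assumption inherited from that theorem) yields $\|\x_{t+1} - \x_0\|_2 \leq \sqrt{2}\,\|\x_0 - \x_*\|_2$, and hence contributes exactly the $\sqrt{2}\,\delta\,\|\x_0 - \x_*\|_2$ summand in the claim.

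For the first term I would use the standard consequence of $(L+\delta)$-smoothness of $f_\delta$: for any minimizer $\x_\delta^\ast$ of $f_\delta$,
\[
\|\nabla f_\delta(\x_{t+1})\|_2^{\,2} \;\leq\; 2(L+\delta)\bigl(f_\delta(\x_{t+1}) - f_\delta(\x_\delta^\ast)\bigr).
\]
Plugging $\x = \x_\delta^\ast$ into the first inequality of Theorem~\ref{thm:on} gives
\[
f_\delta(\x_{t+1}) - f_\delta(\x_\delta^\ast) \;\leq\; \Bigl(1 - \sqrt{\tfrac{\delta}{L+\delta}}\Bigr)^{t}\bigl[f(\x_0) - f(\x_\delta^\ast)\bigr].
\]

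It remains to bound the initial gap $f(\x_0) - f(\x_\delta^\ast)$. Because $\x_\ast$ minimizes $f$, we have $f(\x_\ast) \leq f(\x_\delta^\ast)$, and by $L$-smoothness of $f$ together with $\nabla f(\x_\ast)=0$, $f(\x_0) - f(\x_\ast) \leq \frac{L}{2}\|\x_0 - \x_\ast\|_2^2$. Chaining these two facts gives $f(\x_0) - f(\x_\delta^\ast) \leq \frac{L}{2}\|\x_0 - \x_\ast\|_2^2$. Substituting back,
\[
\|\nabla f_\delta(\x_{t+1})\|_2^{\,2} \;\leq\; L(L+\delta)\Bigl(1 - \sqrt{\tfrac{\delta}{L+\delta}}\Bigr)^{t}\|\x_0 - \x_\ast\|_2^{\,2},
\]
so $\|\nabla f_\delta(\x_{t+1})\|_2 \leq \sqrt{L(L+\delta)}\,(1 - \sqrt{\delta/(L+\delta)})^{t/2}\|\x_0 - \x_\ast\|_2$. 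Adding the two bounds yields the claim.

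The main obstacle is simply the bookkeeping around $\x_\delta^\ast$ versus $\x_\ast$: the smoothness-based estimate $\|\nabla f_\delta\|_2^2 \leq 2(L+\delta)(f_\delta - f_\delta^\ast)$ is anchored at the minimizer of $f_\delta$, while Theorem~\ref{thm:on} naturally talks about $f(\x_0) - f(\x)$. The inequality $f(\x_\ast) \leq f(\x_\delta^\ast)$ is the key bridge that lets us replace the unknown quantity $f(\x_0) - f(\x_\delta^\ast)$ by the clean smoothness bound $\frac{L}{2}\|\x_0 - \x_\ast\|_2^2$. Everything else is a direct invocation of the corresponding piece of Theorem~\ref{thm:on}.
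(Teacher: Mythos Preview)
Your proof is correct and follows essentially the same route as the paper: decompose $\nabla f(\x_{t+1})=\nabla f_\delta(\x_{t+1})-\delta(\x_{t+1}-\x_0)$, bound $\|\nabla f_\delta(\x_{t+1})\|_2$ via the $(L+\delta)$-smoothness inequality combined with the first conclusion of Theorem~\ref{thm:on} and $f(\x_0)-f(\x_*)\le\frac{L}{2}\|\x_0-\x_*\|_2^2$, and bound $\delta\|\x_{t+1}-\x_0\|_2$ via the third conclusion of Theorem~\ref{thm:on}. The only cosmetic difference is that the paper plugs $\x=\x_{t+1}^+:=\x_{t+1}-\frac{1}{L+\delta}\nabla f_\delta(\x_{t+1})$ (invoking Corollary~\ref{cor:1}) into Theorem~\ref{thm:on} and then uses $f(\x_{t+1}^+)\ge f(\x_*)$, whereas you plug $\x=\x_\delta^\ast$ and use $f(\x_\delta^\ast)\ge f(\x_*)$; both choices yield the identical bound.
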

\begin{proof}
Let $\x_{t+1}^+ =\x_{t+1} - \frac{1}{L+\delta}\nabla f_\delta(\x_{t+1})$ in the first inequality in Theorem~\ref{thm:on}, we have
\begin{align*}
f_\delta(\x_{t+1}) - f_\delta(\x_{t+1}^+) &\leq \left(1 - \sqrt{\frac{\delta}{L+\delta}}\right)^{t}[f(\x_0) - f(\x_{t+1}^+)]\leq \left(1 - \sqrt{\frac{\delta}{L+\delta}}\right)^{t}[f(\x_0) - f(\x_*)],
\end{align*}
where the last inequality uses  $f(\x_{t+1}^+)\geq f(\x_*)$.
Applying Corollary~\ref{cor:1} we have
\begin{align*}
\frac{1}{2(L+\delta)}\|\nabla f_\delta(\x_{t+1})\|_2^2\leq f_\delta(\x_{t+1}) - f_\delta(\x_{t+1}^+)\leq  \left(1 - \sqrt{\frac{\delta}{L+\delta}}\right)^{t}[f(\x_0) - f(\x_*)].
\end{align*}
Then we have
\begin{align*}
\|\nabla f_\delta(\x_{t+1})\|_2\leq \sqrt{2(L+\delta)} &\left(1 - \sqrt{\frac{\delta}{L+\delta}}\right)^{t/2}\sqrt{f(\x_0) - f(\x_*)}\\
&\leq  \sqrt{L(L+\delta)} \left(1 - \sqrt{\frac{\delta}{L+\delta}}\right)^{t/2}\|\x_0 - \x_*\|_2,
\end{align*}
where the last inequality uses the smoothness of $f(\x)$. 
To proceed, we have
\begin{align*}
\|\nabla f(\x_{t+1})\|_2& = \|\nabla f_\delta(\x_{t+1}) - \delta(\x_{t+1} - \x_0)\|_2 \leq \|\nabla f_\delta(\x_{t+1})\|_2 + \delta\|\x_{t+1} - \x_0\|_2\\
&\leq  \|\nabla f_\delta(\x_{t+1})\|_2 + \sqrt{2}\delta\|\x_0 - \x_*\|_2.
\end{align*}
\end{proof}
Finally, we can prove the main theorem of this section. 
\begin{thm}
	\label{smooth_thm}
Suppose $f(\x_0) - f_*\leq \xi_0$, $f(\x)$ satisfies HEB on $\S_{\xi_0}$ with $\theta\in(0,1]$ and $c_0\leq c$. Let $\epsilon_0 = \|\nabla f(\x_0)\|_2$ and $K=\lceil \log_2(\frac{\epsilon_0}{\epsilon})\rceil$, $p=(1-2\theta)/(1-\theta)$ for $\theta\in(0,1/2]$. The iteration complexity of the Algorithm~\ref{alg:1-4} for having $\|\nabla f(\x_K)\|_2 \leq \epsilon$ is $ \textstyle \widetilde O\left(\sqrt{L}c^{\frac{1}{2(1-\theta)}}\max(\frac{1}{\epsilon^{p/2}}, \log(\varepsilon_0/\epsilon) \right)$, 
where $\widetilde O(\cdot)$ suppresses a log term depending on  $c, c_0, L, \gamma$.
\end{thm}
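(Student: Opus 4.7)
The plan is to proceed by induction on the stage index $k$, showing that at the end of each stage we have $\|\nabla f(\x_k)\|_2 \leq \epsilon_k := \epsilon_0/2^k$, and then to sum the inner iteration counts. The backbone is the per-stage gradient bound from Theorem~\ref{thm:ong} combined with the gradient-to-distance bound from Lemma~\ref{lem:4}. Concretely, when the inner loop at stage $k$ runs for $\tau_k = \lceil 2\sqrt{(L+\delta_k)/\delta_k}\,\log(\sqrt{L(L+\delta_k)}/\delta_k)\rceil$ iterations starting from $\x_{k-1}$, Theorem~\ref{thm:ong} gives, after a short calculation of the exponent,
\[
\|\nabla f(\x_k)\|_2 \;\leq\; \bigl(\delta_k + \sqrt{2}\,\delta_k\bigr)\,\|\x_{k-1}-\x^*_{k-1}\|_2 \;\leq\; 3\delta_k\,\|\x_{k-1}-\x^*_{k-1}\|_2,
\]
where $\x^*_{k-1}\in\Omega_*$ is the projection of $\x_{k-1}$. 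By the induction hypothesis $\|\nabla f(\x_{k-1})\|_2\leq \epsilon_{k-1}$, and monotonicity of $f$ along the trajectory (second statement of Theorem~\ref{thm:on}) ensures $\x_{k-1}\in\S_{\xi_0}$, so Lemma~\ref{lem:4} yields $\|\x_{k-1}-\x^*_{k-1}\|_2 \leq c^{1/(1-\theta)}\epsilon_{k-1}^{\theta/(1-\theta)}$.

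Next, I would verify that the algorithmic choice $\delta_k = \epsilon_{k-1}^{(1-2\theta)/(1-\theta)}/(6\,c_e^{1/(1-\theta)})$ is exactly what balances the bound: substituting it in gives $\|\nabla f(\x_k)\|_2 \leq (c/c_e)^{1/(1-\theta)}\epsilon_{k-1}/2$, so whenever $c_e \geq c$ the success condition $\|\nabla f(\x^k_{\tau+1})\|_2\leq \epsilon_{k-1}/2$ is met within $\tau_k$ inner steps, closing the induction. If the current estimate $c_e$ is too small, the inner loop exits through the other branch and replaces $c_e$ by $\gamma c_e$; after at most $\lceil \log_\gamma(c/c_0)\rceil$ such inflations the condition $c_e\geq c$ is permanent. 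This search costs a single additive $O(\log_\gamma(c/c_0))$ factor of the per-stage work, absorbed into the $\widetilde O(\cdot)$.

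For the total iteration count, once $c_e\geq c$ is reached, each stage requires
\[
\tau_k \;=\; \widetilde O\!\left(\sqrt{\tfrac{L+\delta_k}{\delta_k}}\right) \;=\; \widetilde O\!\left(\sqrt{L}\,c_e^{1/(2(1-\theta))}\,\epsilon_{k-1}^{-p/2}\right),
\]
with $p=(1-2\theta)/(1-\theta)$, using $\delta_k\leq L$. Summing over $k=1,\dots,K$ with $\epsilon_{k-1}=\epsilon_0 2^{-(k-1)}$ gives a geometric series $\sum_{k=1}^K 2^{(k-1)p/2}$, which equals $O(2^{Kp/2})=O(\epsilon^{-p/2})$ when $\theta<1/2$ (so $p>0$), and equals $O(K)=O(\log(\epsilon_0/\epsilon))$ when $\theta=1/2$ (so $p=0$); for $\theta>1/2$ the series in $\delta_k$ shrinks and the total is finite. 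Taking the max over the two regimes and plugging in $K=\lceil\log_2(\epsilon_0/\epsilon)\rceil$ produces the announced complexity $\widetilde O(\sqrt{L}\,c^{1/(2(1-\theta))}\max(\epsilon^{-p/2},\log(\epsilon_0/\epsilon)))$.

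The main obstacle I expect is bookkeeping around the adaptive search for $c_e$: one must argue that failed inner loops (before $c_e$ crosses $c$) contribute only an extra logarithmic factor, not a multiplicative blowup per stage. The key observation to make this clean is that $c_e$ is monotone across the entire run (never reset between stages), so the total number of inflation events is bounded by $\lceil \log_\gamma(c/c_0)\rceil$ globally, and each failed pass costs the same geometric-in-$\epsilon_{k-1}$ factor as a successful pass at that stage; so it adds at most one extra copy of the dominant sum, which is hidden inside the $\widetilde O$. A secondary technical point is to make the exponent computation $(1-\sqrt{\delta/(L+\delta)})^{\tau_k/2}\leq \delta/\sqrt{L(L+\delta)}$ rigorous using $1-x\leq e^{-x}$ with the exact choice of $\tau_k$ in the algorithm; once that identity is in hand, the rest is algebra.
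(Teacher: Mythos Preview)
Your proposal is correct and follows essentially the same route as the paper: Theorem~\ref{thm:ong} for the per-stage gradient bound, Lemma~\ref{lem:4} to convert the gradient norm into a distance bound, the choice of $\delta_k$ to balance the two so that $c_e\ge c$ forces success within $\tau_k$ steps, and then the geometric summation over stages. The only cosmetic difference is in the bookkeeping for the $c_e$-search: the paper bounds each $s_k\le 1+\log_\gamma(c/c_0)$ and factors it out of $\sum_k t_k'$, whereas you use the (slightly sharper) global observation that $c_e$ is monotone so there are at most $\lceil\log_\gamma(c/c_0)\rceil$ failed passes overall; both are absorbed into the $\widetilde O$.
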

\begin{proof}

We can easily induce that $f(\x_{k}) - f_*\leq \xi_0$ from Theorem~\ref{thm:on}. Let $t_k =  \lceil 2\sqrt{\frac{L+\delta_k}{\delta_k}}\log\frac{\sqrt{L(L+\delta_k)}}{\delta_k}\rceil$.
 Applying Theorem~\ref{thm:ong} to the $k$-the stage of adaAGC, we have
\begin{align*}
&\|\nabla f(\x^k_{t_k+1})\|_2\leq\sqrt{L(L+\delta_k)} \left(1 - \sqrt{\frac{\delta_k}{L+\delta_k}}\right)^{t_k/2}\|\x_{k-1}  - \x_*\|_2+ \sqrt{2}\delta_k\|\x_{k-1} - \x_*\|_2\\ 
&\leq \sqrt{L(L+\delta_k)} \left(1 - \sqrt{\frac{\delta_k}{L+\delta_k}}\right)^{t_k/2}c^{\frac{1}{(1-\theta)}}\|\nabla f(\x_{k-1})\|_2^{\frac{\theta}{(1-\theta)}} + \sqrt{2}\delta_k c^{\frac{1}{(1-\theta)}}\|\nabla f(\x_{k-1})\|_2^{\frac{\theta}{(1-\theta)}},
\end{align*}
where the last inequality follows Lemma~\ref{lem:4}.  Note that at each stage,  we check two conditions (i) $\|\nabla f(\x^k_{\tau+1})\|_2\leq \epsilon_{k-1}/2$ and  (ii) $\tau=t_k$. If the first condition satisfies first, we proceed to the next stage. If the second condition satisfies first, then we can claim that $c_e\leq c$ and then we  increase $c_e$ by a factor $\gamma>1$ and then restart the same stage. To verify the claim, assume $c_e>c$ and the second condition satisfies first, i.e., $\tau=t_k$ but $\|\nabla f(\x^k_{\tau+1})\|_2>\epsilon_{k-1}/2$. We will deduce a contradiction. To this end,  we use
\begin{align*}
\|\nabla f(\x^k_{t_k+1})\|_2&\leq\left( \sqrt{L(L+\delta_k)} \left(1 - \sqrt{\frac{\delta_k}{L+\delta_k}}\right)^{t_k/2}+\sqrt{2}\delta_k\right)c^{\frac{1}{1-\theta}}\|\nabla f(\x_{k-1})\|_2^{\frac{\theta}{1-\theta}} \\
&\leq (\delta_k  + \sqrt{2}\delta_k)c^{\frac{1}{1-\theta}}\|\nabla f(\x_{k-1})\|_2^{\frac{\theta}{1-\theta}} \\
&\leq 3 \delta_k c^{\frac{1}{1-\theta}}\|\nabla f(\x_{k-1})\|_2^{\frac{\theta}{1-\theta}} = \frac{3\epsilon_{k-1}^{\frac{1-2\theta}{1-\theta}}}{6c_e^{1/(1-\theta)}} c^{\frac{1}{1-\theta}}\epsilon_{k-1}^{\frac{\theta}{1-\theta}}\leq \epsilon_{k-1}/2,
\end{align*}
where the last inequality follows that $c_e>c$. This contradicts to the assumption that $\|\nabla f(\x^k_{\tau+1})\|_2>\epsilon_{k-1}/2$, which verifies our claim.

Since $c_e$ is increased by a factor $\gamma>1$ whenever condition (ii) holds first.  Thus with at most $\lceil \log_\gamma(c/c_0)\rceil$ times condition (ii) holds first. Similarly with at most $\lceil\log_2\epsilon_0/\epsilon\rceil$ times that condition (i) holds first before the algorithm terminates. We let $T_k$ denote the total number of iterations in order to make condition (i) satisfies in stage $k$. First, we can see that $c_e\leq \gamma c$. 

Let  $\delta'_k = \frac{\epsilon_{k-1}^{\frac{1-2\theta}{1-\theta}}}{6(\gamma c)^{1/(1-\theta)}}$ and $t'_k =   \lceil 2\sqrt{\frac{L+\delta'_k}{\delta'_k}}\log\frac{\sqrt{L(L+\delta'_k)}}{\delta'_k}\rceil$. Let $s_k$ denote the number of cycles in each stage in order to have $\|\nabla f(\x^k_{\tau+1})\|_2\leq \epsilon_k$. Then $s_k\leq \log_\gamma(c/c_0)+1$.  The total number of iterations of across all stages is bounded by   $\sum_{k=1}^Ks_kt_k$, which is bounded by 
\begin{align*}
\sum_{k=1}^Ks_kt_k &\leq (1+\log_{\gamma}(c/c_0))\sum_{k=1}^Kt_k'.
\end{align*}
Plugging the value of $t_k'$, we can deduce the iteration complexity in Theorem \ref{smooth_thm} for $\theta\in(0,1/2]$.

\end{proof}

\section{Adaptive Accelerated Gradient Converging Methods  for Smooth
Composite Optimization}

In this section, we generalize the results in previous section to smooth composite optimization problem
\begin{align*}
\min_{\x\in\R^d}F(\x)\triangleq f(\x) + g(\x).
\end{align*}
Different from last section, we will use the proximal gradient $G(\x_t)$ as a measure for restart and termination in adaAGC. 
Similar to last section, we first  present a key lemma for our development that serves the foundation of the adaptive regularization and conditional restarting. 

\begin{lemma}\label{lem:5}
	Assume $F(\x)$ satisfies HEB for any $\x\in\S_{\xi}$ with $\theta\in(0,1]$. If $\theta\in(0,1/2]$ then we have for any $\x\in\S_{\xi}$
	\begin{align*}
		D(\x, \Omega_*)\leq \frac{2}{L}\|G(\x)\|_2 + c^{\frac{1}{1-\theta}}2^{\frac{\theta}{1-\theta}}\|G(\x)\|_2^{\frac{\theta}{1-\theta}}.
	\end{align*}
	If $\theta\in(1/2, 1]$, we have
	for any $\x\in\S_{\xi}$
	\begin{align*}
		D(\x, \Omega_*)\leq \left(\frac{2}{L}+ 2c^2\xi^{2\theta-1}\right)\|G(\x)\|_2.
	\end{align*}
\end{lemma}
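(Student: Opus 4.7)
The plan is to combine the descent-type inequality of Proposition~\ref{lem:2} with HEB applied at the \emph{post-proximal-step} point $\x^+$, closing a self-referential bound on $D(\x,\Omega_*)$ via the triangle inequality. The key observation motivating this route is that, although $F(\x)-F_*$ cannot in general be controlled by $\|G(\x)\|_2$ alone, the shifted gap $F(\x^+)-F_*$ can, thanks to Proposition~\ref{lem:2}.

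First I would verify that $\x^+\in\S_\xi$. Proposition~\ref{lem:2} applied with $\y=\x$ and reference point $\x$ yields $F(\x^+)\le F(\x)\le F_*+\xi$, so HEB is valid at $\x^+$. Then, invoking Proposition~\ref{lem:2} again with $\y=\x$, $\eta=1/L$, and reference point $\x_*=P_{\Omega_*}(\x)$, followed by Cauchy--Schwarz,
\[
F(\x^+)-F_*\le G(\x)^\top(\x-\x_*)-\tfrac{1}{2L}\|G(\x)\|_2^2\le \|G(\x)\|_2\,D(\x,\Omega_*).
\]
Setting $g=\|G(\x)\|_2$, $D=D(\x,\Omega_*)$, and $D^*=D(\x^+,\Omega_*)$, the identity $\|\x-\x^+\|_2=g/L$ together with the triangle inequality gives $D\le g/L+D^*$, while HEB at $\x^+$ yields $D^*\le c(F(\x^+)-F_*)^\theta$.

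For $\theta\in(0,1/2]$ these combine to $D^*\le c\,g^\theta D^\theta$, which I would resolve by splitting on the two contributions to $D\le g/L+D^*$: if $D^*\le g/L$ then $D\le 2g/L$ immediately; otherwise $D\le 2D^*$, and substituting back yields $D^*\le c g^\theta(2D^*)^\theta$, i.e.\ $D^*\le 2^{\theta/(1-\theta)}c^{1/(1-\theta)}g^{\theta/(1-\theta)}$, so adding the two terms from the triangle inequality recovers the stated bound. For $\theta\in(1/2,1]$, I would first use $F(\x^+)-F_*\le\xi$ to factor one half-power, giving $D^*\le c\,\xi^{\theta-1/2}\sqrt{gD}$, and then run the analogous split; in the nontrivial branch $D\le 2D^*$, so $D^*\le c\,\xi^{\theta-1/2}\sqrt{2gD^*}$, hence $D^*\le 2c^2\xi^{2\theta-1}g$, and combining with $g/L$ gives the claim. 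The only subtle point is the choice of \emph{where} to apply HEB: applying it at $\x$ directly closes nothing, since there is no a~priori upper bound on $F(\x)-F_*$ in terms of $g$; routing through $\x^+$ produces the right recursive inequality in $D$, which is then handled by the elementary case split above, giving exactly the constants in the statement (Young's inequality would give the same functional form but slightly looser constants).
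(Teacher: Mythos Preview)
Your proof is correct, but it follows a genuinely different route from the paper's. The paper does \emph{not} work with the proximal-gradient point $\x^+$; instead it introduces the full proximal point $P_{\eta F}(\x)=\arg\min_{\u}\tfrac{1}{2}\|\u-\x\|_2^2+\eta F(\u)$, uses that $\tfrac{1}{\eta}(\x-P_{\eta F}(\x))\in\partial F(P_{\eta F}(\x))$, and then applies Lemma~\ref{lem:4} (which in turn rests on the KL-type Proposition~\ref{prop_bolte}) to bound $D(P_{\eta F}(\x),\Omega_*)$ directly by a power of $\|\partial F(P_{\eta F}(\x))\|_2$. The passage from $\|\x-P_{\eta F}(\x)\|_2$ back to $\|G_\eta(\x)\|_2$ is handled via the external comparison result of Proposition~\ref{prop_lewis} (Drusvyatskiy). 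Plugging $\eta=1/L$ then gives the constants $2/L$ and $2^{\theta/(1-\theta)}$ in one line, with no recursion.

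Your argument, by contrast, stays entirely within the tools already set up for the proximal-gradient step: Proposition~\ref{lem:2} gives $F(\x^+)-F_*\le gD$, HEB at $\x^+$ gives $D^*\le c(gD)^\theta$ (or the $\xi^{\theta-1/2}\sqrt{gD}$ variant for $\theta>1/2$), and the triangle inequality $D\le g/L+D^*$ closes the loop through a two-case split. This is more elementary and self-contained: it avoids both Lemma~\ref{lem:4} (hence the KL machinery) and Proposition~\ref{prop_lewis}. The trade-off is that the paper's route is non-recursive and produces the constants mechanically, whereas yours requires the case split to break the self-reference; both land on exactly the stated constants.
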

\begin{proof}
	The conclusion is trivial when $\x\in\Omega_*$, so we only need to consider the case when $\x\notin\Omega_*$. Define $P_{\eta F}(\x)=\arg\min\limits_{\u}\frac{1}{2}\Vert \u-\x\Vert_2^2+\eta F(\u)$.
	
	We first prove for $\theta\in(0,1/2]$. It is not difficult to see that $\frac{1}{\eta}(\x - P_{\eta F}(\x))\in\partial F(P_{\eta F}(\x))$. 
	\begin{align*}
	&D(\x, \Omega_*)\leq \|\x - P_{\eta F}(\x)\|_2 + D(P_{\eta F}(\x), \Omega_*)\\
	&\leq \|\x - P_{\eta F}(\x)\|_2 + c^{\frac{1}{1-\theta}}\|\partial F(P_{\eta F}(\x))\|_2^{\frac{\theta}{1-\theta}}\\
	& \leq  \|\x - P_{\eta F}(\x)\|_2 + \frac{c^{\frac{1}{1-\theta}}}{\eta^{\frac{\theta}{1-\theta}}}\|\x - P_{\eta F}(\x)\|_2^{\frac{\theta}{1-\theta}}\\
	& \leq \eta (1+L\eta)\|G_\eta(\x)\|_2 + c^{\frac{1}{1-\theta}}(1+\eta L)^{\frac{\theta}{1-\theta}}\|G_\eta(\x)\|_2^{\frac{\theta}{1-\theta}},
	\end{align*}
	where the second inequality uses the result in Lemma~\ref{lem:4} 
	and the last inequality follows Proposition~\ref{prop_lewis}, which asserts that $\|\x - P_{\eta F}(\x)\|_2\leq \eta(1+L\eta)\|G_\eta(\x)\|_2$. 
	Plugging the value $\eta = 1/L$, we have the result.


	Next, we prove for $\theta\in(1/2,1]$. For any $\x\in S_\xi$, we have $P_{\eta F}(\x)\in S_\xi$ and 
	\begin{align*}
	&D(P_{\eta F}(\x),\Omega_*)\leq c(F(P_{\eta F}(\x))-F_*)^\theta\\
	&=c(F(P_{\eta F}(\x))-F_*)^{1-\theta}(F(P_{\eta F}(\x))-F_*)^{2\theta-1}\leq c^2\Vert \partial F(P_{\eta F}(\x))\Vert_2(F(\x)-F_*)^{2\theta-1}\\
	&\leq c^2\Vert \partial F(P_{\eta F}(\x))\Vert_2 \xi^{2\theta-1}\leq c^2 (1+L\eta)\Vert G_\eta(\x)\Vert_2\xi^{2\theta-1}\leq 2c^2\xi^{2\theta-1}\Vert G_\eta(\x)\Vert_2,
	\end{align*}
	where the second inequality holds because the inequality~(\ref{KL}) holds for any $\theta\in(0,1]$ (by Lemma \ref{lem:4}), $F(P_{\eta F}(\x))\leq F(\x)\leq\xi$, the fourth inequality holds since $\|G_\eta(\x)\|_2\geq \frac{1}{1+L\eta}\left\|(\x - P_{\eta F}(\x))/\eta\right\|_2\geq \frac{1}{1+L\eta}\|\partial F(P_{\eta F}(\x))\|_2$ (by Proposition \ref{prop_lewis}), and the last inequality holds by taking $\eta=1/L$.

	So for $\theta\in(1/2,1]$ and $\eta=1/L$, we have
	\begin{align*}
	D(\x,\Omega_*)&\leq \|\x - P_{\eta F}(\x)\|_2 + D(P_{\eta F}(\x), \Omega_*)\\
	&\leq(\frac{2}{L}+2c^2\xi^{2\theta-1})\Vert G(\x)\Vert_2.
	\end{align*}
\end{proof}

A building block of the proposed algorithm is to solve a problem of the following style:
\begin{align}\label{eqn:delta}
	F_\delta(\x) = F(\x) + \frac{\delta}{2}\|\x - \x_0\|_2^2,
\end{align}
which consists of a $L$-smooth function $f(\x)$ and a $\delta$-strongly convex function $g_\delta(\x) = g(\x) +  \frac{\delta}{2}\|\x - \x_0\|_2^2$. We present some technical results for employing  the Algorithm~\ref{alg:0}  (i.e., Nesterov's ADG) to solve the above problem. 
\begin{thm}\label{thm:on2}
	By running the Algorithm~\ref{alg:0} for minimizing $f(\x) + g_\delta(\x)$ with an initial solution $\x_0$, then for any $\x\in\R^d$ and $t\geq 0$, 
	\begin{align*}
		F_\delta(\x_{t+1}) - F_\delta(\x) \leq \frac{L}{2}\|\x_0 - \x\|_2^2\left[1 + \sqrt{\frac{\delta}{2L}}\right]^{-2t},
	\end{align*}
	and $F(\x_{t+1}) \leq F(\x_0)$. 
	If $t\geq \sqrt{\frac{L}{2\delta}}\log\left(\frac{L}{\delta}\right)$, we have $\|\x_{t+1} - \x_0\|_2 \leq \sqrt{2}\|\x_0- \x_*\|_2$.
\end{thm}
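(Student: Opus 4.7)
The plan is to invoke Proposition~\ref{prop:adg} with $\alpha = \delta$ on the objective $F_\delta = f + g_\delta$, where $f$ is still $L$-smooth and $g_\delta(\x) = g(\x) + \frac{\delta}{2}\|\x - \x_0\|_2^2$ is $\delta$-strongly convex (as the sum of a convex function and a $\delta$-strongly convex quadratic). Applying the proposition to the iterates of Algorithm~\ref{alg:0} run on $F_\delta$ with initial point $\x_0$ immediately gives the first displayed inequality, since the constant in the rate depends only on the smoothness parameter $L$ of $f$ and the strong convexity parameter $\delta$ of $g_\delta$.

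For the second inequality, I would specialize the first bound to $\x = \x_0$. Since $\|\x_0 - \x_0\|_2 = 0$, the right hand side vanishes and we obtain $F_\delta(\x_{t+1}) \leq F_\delta(\x_0) = F(\x_0)$. Expanding $F_\delta(\x_{t+1}) = F(\x_{t+1}) + \frac{\delta}{2}\|\x_{t+1} - \x_0\|_2^2$ and dropping the non-negative penalty term yields $F(\x_{t+1}) \leq F(\x_0)$.

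For the third inequality, I would specialize the first bound to $\x = \x_* \in \Omega_*$ and expand $F_\delta$ on both sides. Using $F(\x_{t+1}) \geq F(\x_*)$ to cancel the $F$ terms leaves
\[
\frac{\delta}{2}\|\x_{t+1} - \x_0\|_2^2 \;\leq\; \frac{\delta}{2}\|\x_0 - \x_*\|_2^2 + \frac{L}{2}\|\x_0 - \x_*\|_2^2\left(1+\sqrt{\delta/(2L)}\right)^{-2t}.
\]
Multiplying through by $2/\delta$, the target bound $\|\x_{t+1} - \x_0\|_2 \leq \sqrt{2}\|\x_0 - \x_*\|_2$ follows as soon as $(L/\delta)\left(1+\sqrt{\delta/(2L)}\right)^{-2t} \leq 1$, equivalently $(1+\sqrt{\delta/(2L)})^{2t} \geq L/\delta$.

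The only step I expect to require genuine care is verifying that the stated threshold $t \geq \sqrt{L/(2\delta)}\log(L/\delta)$ is indeed sufficient for this last inequality. Taking logarithms reduces it to $2t\log(1+\sqrt{\delta/(2L)}) \geq \log(L/\delta)$, which follows from an elementary lower bound such as $\log(1+x) \geq x/(1+x)$ at $x = \sqrt{\delta/(2L)}$; this is exactly the same bookkeeping already carried out in the proof of Theorem~\ref{thm:on} (where $(1-\sqrt{\delta/(L+\delta)})^t$ played the analogous role), so the calculation transfers directly and should recover the stated constant up to the standard linearization of $\log(1+x)$.
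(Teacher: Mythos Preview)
Your proposal is correct and follows essentially the same route as the paper: apply Proposition~\ref{prop:adg} to $F_\delta=f+g_\delta$, then specialize the resulting bound first to $\x=\x_0$ (to get $F(\x_{t+1})\leq F(\x_0)$) and then to $\x=\x_*$ (using $F(\x_{t+1})\geq F(\x_*)$) to obtain the distance bound once $(L/\delta)(1+\sqrt{\delta/(2L)})^{-2t}\leq 1$. Your discussion of the logarithm threshold is slightly more explicit than the paper's, which simply asserts the final implication.
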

\begin{proof}
	 Applying Proposition \ref{Nesterov2007} to $F_\delta(\x)$ yields
	\begin{equation}
	\label{thm5proof}
	\begin{aligned}
	F(\x_{t+1}) - F(\x) &+ \frac{\delta}{2}\|\x_{t+1} - \x_0\|_2^2 \leq \frac{\delta}{2}\|\x - \x_0\|_2^2 \\
	&+   \frac{L}{2}\|\x_0 - \x\|_2^2\left[1 + \sqrt{\frac{\delta}{2L}}\right]^{-2t}.
	\end{aligned}
	\end{equation}
	Then  $
	F(\x_{t+1}) - F(\x_0)\leq 0$,
	and 
	choose $\x=\x_*$ in the inequality (\ref{thm5proof}), where $\x_*\in\Omega_{*}$, then we have
	\begin{align*}
	&\|\x_{t+1} -\x_0\|_2^2\\
	&\leq \|\x_0 - \x_*\|_2^2 + \frac{L}{\delta}\|\x_0 - \x_*\|_2^2\left[1 + \sqrt{\frac{\delta}{2L}}\right]^{-2t}.
	\end{align*}
	Under the condition $t\geq \sqrt{\frac{L}{2\delta}}\log\left(\frac{L}{\delta}\right)$ we have $\|\x_{t+1} - \x_0\|_2\leq \sqrt{2}\|\x_0 - \x_*\|_2$.
\end{proof}
 

\begin{thm}\label{thm:ong2}
	Under the same condition as in Theorem~\ref{thm:on2}, for $t\geq \sqrt{\frac{L}{2\delta}}\log\left(\frac{L}{\delta}\right)$ we have
\begin{align*}
		\|G(\x_{t+1})\|_2  &\leq \sqrt{L(L+\delta)}\|\x_0 - \x_*\|_2\left[1 + \sqrt{\delta/(2L)}\right]^{-t}\\
		& + 2\sqrt{2}\delta\|\x_* - \x_0\|_2.
	\end{align*}
\end{thm}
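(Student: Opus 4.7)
\textbf{Proof proposal for Theorem \ref{thm:ong2}.} The plan is to mirror the proof of the smooth-only analogue (Theorem \ref{thm:ong}) but to accommodate the non-smooth part $g$ by introducing an auxiliary proximal-gradient step of $\x_{t+1}$ taken with respect to the regularized objective $F_\delta=f_\delta+g$, and then translating its norm back to $\|G(\x_{t+1})\|_2$. Concretely, I would set
\[
\tilde{\x}^+ := P_{g/(L+\delta)}\bigl(\x_{t+1}-\nabla f_\delta(\x_{t+1})/(L+\delta)\bigr),\qquad
\tilde{G} := (L+\delta)(\x_{t+1}-\tilde{\x}^+),
\]
which is a legitimate proximal-gradient step since $f_\delta$ is $(L+\delta)$-smooth.

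The first stage is to bound $\|\tilde G\|_2$. Applying Corollary \ref{cor:1} to $F_\delta$ with $\eta=1/(L+\delta)$ gives
\[
\tfrac{1}{2(L+\delta)}\|\tilde G\|_2^2 \le F_\delta(\x_{t+1})-F_\delta(\tilde{\x}^+)\le F_\delta(\x_{t+1})-F_\delta(\x_{*,\delta}),
\]
where $\x_{*,\delta}$ denotes the minimizer of $F_\delta$. Invoking Theorem \ref{thm:on2} with $\x=\x_{*,\delta}$ yields $F_\delta(\x_{t+1})-F_\delta(\x_{*,\delta})\le \tfrac{L}{2}\|\x_0-\x_{*,\delta}\|_2^2\,[1+\sqrt{\delta/(2L)}]^{-2t}$. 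A short lemma, obtained by combining $F_\delta(\x_{*,\delta})\le F_\delta(\x_*)$ with $F(\x_{*,\delta})\ge F(\x_*)$, shows $\|\x_0-\x_{*,\delta}\|_2\le \|\x_0-\x_*\|_2$, so
\[
\|\tilde G\|_2 \le \sqrt{L(L+\delta)}\,\|\x_0-\x_*\|_2\,[1+\sqrt{\delta/(2L)}]^{-t}.
\]

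The second stage translates $\tilde G$ into the sought quantity $G(\x_{t+1})=G^F_{1/L}(\x_{t+1})$. By Proposition \ref{decreaseNesterov} applied to $F$, the map $\eta\mapsto \|G^F_\eta(\x_{t+1})\|_2$ is monotonically decreasing, hence $\|G(\x_{t+1})\|_2\le \|G^F_{1/(L+\delta)}(\x_{t+1})\|_2$. The latter differs from $\|\tilde G\|_2$ only in that $\nabla f$ is replaced by $\nabla f_\delta$ in the proximal argument, so by non-expansiveness of $P_{g/(L+\delta)}$,
\[
\bigl\|G^F_{1/(L+\delta)}(\x_{t+1})-\tilde G\bigr\|_2 \le (L+\delta)\cdot\tfrac{1}{L+\delta}\,\|\nabla f_\delta(\x_{t+1})-\nabla f(\x_{t+1})\|_2 = \delta\|\x_{t+1}-\x_0\|_2.
\]
Combining with the bound $\|\x_{t+1}-\x_0\|_2\le \sqrt{2}\|\x_0-\x_*\|_2$ from Theorem \ref{thm:on2} (which is available precisely under the stated range $t\ge \sqrt{L/(2\delta)}\log(L/\delta)$) produces an additive term of order $\delta\|\x_0-\x_*\|_2$ and completes the estimate.

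The main obstacle is the first stage: unlike the smooth case, where the trivial inequality $f(\x_{t+1}^+)\ge f(\x_*)$ lets one upper-bound the gap by $f(\x_0)-f(\x_*)$, here $F_\delta(\tilde{\x}^+)$ can fall below $F_\delta(\x_*)$, so one must compare against $F_\delta(\x_{*,\delta})$ instead and pay the auxiliary bound $\|\x_0-\x_{*,\delta}\|_2\le \|\x_0-\x_*\|_2$. The translation step between the two proximal-gradient mappings (different step size \emph{and} different smooth part) is conceptually the second sensitive point, but it collapses cleanly into monotonicity in $\eta$ plus non-expansiveness of the proximal operator; any mismatch between the constant I obtain ($\sqrt{2}\delta$) and the stated $2\sqrt{2}\delta$ is absorbed in a looser version of the bound on $\|\x_{t+1}-\x_0\|_2$.
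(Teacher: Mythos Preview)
Your proposal is correct and follows essentially the same route as the paper: bound the proximal gradient of $F_\delta$ at $\x_{t+1}$ via Corollary~\ref{cor:1} together with the convergence estimate from Theorem~\ref{thm:on2} and the auxiliary inequality $\|\x_0-\x_{*,\delta}\|_2\le\|\x_0-\x_*\|_2$, then transfer this bound to $\|G(\x_{t+1})\|_2$ using the monotonicity of $\|G_\eta(\cdot)\|_2$ in $\eta$ and a perturbation estimate comparing the two proximal steps. The only noteworthy difference is in the perturbation step: the paper applies Lemma~\ref{strongly} (a generic strongly convex perturbation bound) to obtain $\|\x_\eta^+-\x_\eta^+(\delta)\|_2\le 2\eta\delta\|\x-\x_0\|_2$, whereas you use non-expansiveness of $P_{g/(L+\delta)}$ directly, which yields the sharper constant $\eta\delta\|\x-\x_0\|_2$ and explains why you end up with $\sqrt{2}\delta$ rather than the paper's $2\sqrt{2}\delta$ --- your constant is simply tighter, not in error.
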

\begin{proof}
	Let $\x^*_\delta$ be the optimal solution to $\min_{\x\in\R^d}F_\delta(\x)$ and $\x_*$ denote an optimal solution to $\min_{\x\in\R^d}F(\x)$.  Thanks to the strong convexity of $F_\delta(\x)$, we  have
	$F_\delta(\x_*) - F_\delta(\x^*_\delta)\geq \frac{\delta}{2}\|\x_* - \x^*_\delta\|_2^2$.
	Then 
	\begin{align*}
		F(\x_*) - F(\x_\delta^*) &+ \delta/2\|\x_* - \x_0\|_2^2  - \delta/2\|\x_\delta^* - \x_0\|_2^2\geq  \delta/2\|\x_* - \x^*_\delta\|_2^2.
	\end{align*}
	Since $F(\x_*) - F(\x^*_\delta)\leq 0$, it implies  $\|\x^*_\delta - \x_0\|_2 \leq \|\x_* - \x_0\|_2$.   By Corollary~\ref{cor:1}, we have
	\begin{align*}
		\frac{\eta}{2}\|G_\eta^\delta(\x_{t+1})\|_2^2& \leq F_\delta(\x_{t+1}) - F_\delta(\x^*_{\delta})\leq  \frac{L}{2}\|\x_0 - \x_\delta^*\|_2^2\left[1 + \sqrt{\delta/(2L)}\right]^{-2t},
	\end{align*}
	where $\eta\leq 1/(L+\delta)$ and $G_\eta^\delta$ is a proximal gradient of $F_\delta(\x)$  defined as $ G^\delta_\eta(\x) =\frac{1}{\eta}\left(\x - \x^+_\eta(\delta)\right)$ and 
	\begin{align*}
		\x_\eta^+(\delta) = \arg\min_{\y}\left\{\eta (\nabla f(\x)+\delta (\x - \x_0))^{\top}(\y - \x)+\eta g(\y)+ \frac{1}{2}\|\y - \x\|_2^2\right\}.
	\end{align*}
	Recall that $\x_\eta^+=P_{\eta g}(\x - \eta\nabla f(\x))$. 
	It is not difficult to derive that $\|\x_\eta^+ - \x^+_\eta(\delta)\|_2\leq 2\eta\delta\|\x - \x_0\|_2$ (by Lemma \ref{strongly} in the appendix).  Since $G_\eta(\x) =\frac{1}{\eta}(\x - \x_\eta^+)$, we have
	\begin{align*}
		\|G_\eta(\x)\|_2 & \leq \|G_\eta^\delta(\x)\|_2  + \|\x_\eta^+ - \x_\eta^+(\delta)\|_2/\eta\leq\|G_\eta^\delta(\x)\|_2  +2\delta\|\x - \x_0\|_2.
	\end{align*}
	Let $\eta = 1/(L+\delta)$, we have
	\begin{align*}
		&\|G_\eta(\x_{t+1})\|_2\leq 2\delta\|\x_{t+1} - \x_0\|_2+\sqrt{L/\eta}\|\x_0 - \x_{\delta}^*\|_2\left[1 + \sqrt{\delta/(2L)}\right]^{-t}\\
		&\leq 2\sqrt{2}\delta\|\x_* - \x_0\|_2+\sqrt{L(L+\delta)}\|\x_0 - \x_*\|_2\left[1 + \sqrt{\delta/(2L)}\right]^{-t}.
	\end{align*}
	where we use the inequality  $\|\x_\delta^*-\x_0\|_2\leq \|\x_* - \x_0\|_2$.  Since $\|G_\eta(\x)\|_2$ is a monotonically decreasing function of $\eta$ (by the Proposition \ref{decreaseNesterov})
	, then $\|G(\x)\|_2 \leq\|G_\eta(\x)\|_2$ for $\eta= 1/(L+\delta)\leq 1/L$. Then
\begin{align*}
		\|G(\x_{t+1})\|_2  &\leq \sqrt{L(L+\delta)}\|\x_0 - \x_*\|_2\left[1 + \sqrt{\delta/(2L)}\right]^{-t} + 2\sqrt{2}\delta\|\x_* - \x_0\|_2.
	\end{align*}
\end{proof}
\begin{algorithm}[t]
	\caption{adaAGC for solving~(\ref{eqn:opt}) } \label{alg:1-5}
	\begin{algorithmic}[1]
		\STATE \textbf{Input}: $\x_0\in\Omega$ and $c_0$ and $\gamma>1$
		\STATE Let $c_e=c_0$ and $\varepsilon_0 = \|G(\x_0)\|_2$, 
		\FOR{$k=1,\ldots, K$}
		\FOR{$s=1,\ldots, $}
		\STATE Let $\textstyle\delta_k$ be given in~(\ref{eqn:deltak}) and $g_{\delta_k}(\x)  =g(\x)  + \frac{\delta_k}{2}\|\x - \x_{k-1}\|_2^2$
		
		\STATE $A_0=0$, $\v_0 =\x_{k-1}$,   $\x^k_{0} = \x_{k-1}$
		\FOR{$t= 0, \ldots$}
		\STATE Let $a_{t+1}$ be the root of $\frac{a^2}{A_t+a} = 2\frac{1+\delta_k A_t}{L}$
		\STATE Set $A_{t+1} = A_t+ a_{t+1}$
		\STATE Set $\y_t = \frac{A_t}{A_{t+1}}\x_t^k + \frac{a_{t+1}}{A_{t+1}}\v_t$
		\STATE Compute $\x^k_{t+1} =P_{g_{\delta_k}/L}(\y_{t} - \nabla f(\y_t)/L) $
		\STATE Compute $\v_{t+1} = \arg\min_{\x} \frac{1}{2}\|\x - \x_{k-1}\|_2^2+\sum_{\tau=1}^{t+1}a_\tau\nabla f(\x^k_\tau)^{\top}\x + A_{t+1}g_{\delta_k}(\x)$
		\IF{$\|G(\x^k_{t+1})\|_2\leq \varepsilon_{k-1}/2$ }
		\STATE let $\x_k =\x^k_{t+1}$ and $\varepsilon_k = \varepsilon_{k-1}/2$.
		\STATE break the enclosing two  for loops 
		\ELSIF{$\tau=\lceil  \sqrt{\frac{2L}{\delta_k}}\log\frac{\sqrt{L(L+\delta_k)}}{\delta_k}\rceil$}
		\STATE let $c_e = \gamma c_e$ and break the enclosing for loop
		\ENDIF
		\ENDFOR
		\ENDFOR
		
		\ENDFOR
		\STATE \textbf{Output}:  $\x_K$
	\end{algorithmic}
\end{algorithm}

Finally, we present the proposed adaptive accelerated gradient converging (adaAGC) method for solving the smooth composite optimization in Algorithm~\ref{alg:1-5} and  prove the main theorem of this section. The adaAGC runs with multiple stages ($k=1,\ldots, K$). We start with an initial guess $c_0$ of the parameter $c$ in the HEB. With the current guess $c_e$ of $c$, at the $k$-th stage adaAGC employs ADG to solve a problem of~(\ref{eqn:delta}) with an adaptive regularization parameter $\delta_k$ being 
\begin{equation}\label{eqn:deltak}
	\delta_k =\left\{\begin{array}{lc}\min\left(\frac{L}{32}, \frac{\varepsilon_{k-1}^{\frac{1-2\theta}{1-\theta}}}{16c_e^{1/(1-\theta)}2^{\frac{\theta}{1-\theta}}}\right)&\text{ if }\theta\in(0,1/2]\\\min\left(\frac{L}{32}, \frac{1}{32c_e^2\epsilon_0^{2\theta-1}}\right)&\text{ if }\theta\in(1/2,1] \end{array}\right.
\end{equation}
The step 16 specifies the condition for restarting with an increased value of $c_e$. When the flow enters step 17 before step 14 for each $s$, it means that the current guess $c_e$ is not sufficiently large, then we increase $c_e$ and repeat the same process (next iteration for $s$). We refer to this machinery as conditional restarting. 
\begin{thm}\label{thm:main}
	Suppose $F(\x_0) - F_*\leq \epsilon_0$, $F(\x)$ satisfies HEB on $\S_{\epsilon_0}$ and $c_0\leq c$. Let $\varepsilon_0 = \|G(\x_0)\|_2$,  $K=\lceil \log_2(\frac{\varepsilon_0}{\epsilon})\rceil$, $p=(1-2\theta)/(1-\theta)$ for $\theta\in(0,1/2]$. 
	The iteration complexity of Algorithm~\ref{alg:1-5} for having $
	\|G(\x_K)\|_2 \leq \epsilon$ is 
	$ \textstyle \widetilde O\left(\sqrt{L}c^{\frac{1}{2(1-\theta)}}\max(\frac{1}{\epsilon^{p/2}}, \log(\varepsilon_0/\epsilon) \right)$  if $\theta\in(0,1/2]$,  and  $\widetilde O(\sqrt{L}c\epsilon_0^{\theta-1/2})$ if $\theta\in(1/2, 1]$, 
	where $\widetilde O(\cdot)$ suppresses a log term depending on  $c, c_0, L, \gamma$.
\end{thm}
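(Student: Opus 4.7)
\textbf{Proof proposal for Theorem~\ref{thm:main}.} My plan is to mirror the argument that established Theorem~\ref{smooth_thm} (the smooth case), with three key substitutions: Theorem~\ref{thm:ong} is replaced by Theorem~\ref{thm:ong2}, Lemma~\ref{lem:4} is replaced by Lemma~\ref{lem:5}, and the ``per-stage solver'' is Algorithm~\ref{alg:0} (ADG) applied to the regularized composite objective $F_{\delta_k}(\x)=F(\x)+\frac{\delta_k}{2}\|\x-\x_{k-1}\|_2^2$. The induction hypothesis at stage $k$ is that $F(\x_{k-1})-F_*\leq \epsilon_0$ (so the HEB applies) and $\|G(\x_{k-1})\|_2\leq \varepsilon_{k-1}=\varepsilon_0/2^{k-1}$, and the goal is to show that after at most $t_k=\lceil\sqrt{2L/\delta_k}\log(\sqrt{L(L+\delta_k)}/\delta_k)\rceil$ inner ADG iterations (once $c_e\geq c$), the check $\|G(\x^k_{t+1})\|_2\leq \varepsilon_{k-1}/2$ triggers. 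Theorem~\ref{thm:on2} also guarantees that $F(\x^k_{t+1})\leq F(\x_{k-1})\leq \epsilon_0$, so $\x^k_{t+1}$ remains in the sublevel set where HEB holds, closing the induction.

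First I would combine Theorem~\ref{thm:ong2} with Lemma~\ref{lem:5} to get, for any $t\geq \sqrt{L/(2\delta_k)}\log(L/\delta_k)$,
\begin{align*}
\|G(\x^k_{t+1})\|_2 \leq \Big(\sqrt{L(L+\delta_k)}\bigl[1+\sqrt{\delta_k/(2L)}\bigr]^{-t}+2\sqrt{2}\,\delta_k\Big)\,\|\x_{k-1}-\x_*\|_2,
\end{align*}
and then upper-bound $\|\x_{k-1}-\x_*\|_2=D(\x_{k-1},\Omega_*)$ via Lemma~\ref{lem:5}. For $\theta\in(0,1/2]$ this replaces $\|\x_{k-1}-\x_*\|_2$ by $\frac{2}{L}\varepsilon_{k-1}+c^{1/(1-\theta)}2^{\theta/(1-\theta)}\varepsilon_{k-1}^{\theta/(1-\theta)}$, where for $\varepsilon_{k-1}$ small the second (HEB) term dominates; for $\theta\in(1/2,1]$ the bound is simply linear in $\varepsilon_{k-1}$, with coefficient $\frac{2}{L}+2c^2\epsilon_0^{2\theta-1}$. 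Next I would choose $t$ equal to the threshold $t_k$ in Algorithm~\ref{alg:1-5}, so that the geometric factor $\bigl[1+\sqrt{\delta_k/(2L)}\bigr]^{-t}$ is at most $\delta_k/\sqrt{L(L+\delta_k)}$; the two terms in the parenthesized factor then collapse to $O(\delta_k)$. Substituting the definition~(\ref{eqn:deltak}) of $\delta_k$ and the HEB bound on $\|\x_{k-1}-\x_*\|_2$, a direct arithmetic check shows $\|G(\x^k_{t_k+1})\|_2\leq \varepsilon_{k-1}/2$ whenever $c_e\geq c$. Conversely, if the check fails, the contrapositive forces $c_e<c$, justifying the ``$c_e\leftarrow\gamma c_e$'' step.

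To bound the total complexity I would separate the cost into (a) successful stages and (b) restarts that inflate $c_e$. Event (b) can fire at most $\lceil\log_\gamma(c/c_0)\rceil$ times overall (since $c_e$ only grows and stays $\leq \gamma c$), each time costing at most one inner loop of length $t_k\leq t_k'$, where $t_k'$ uses $c_e$ replaced by $\gamma c$ in $\delta_k$. There are $K=\lceil\log_2(\varepsilon_0/\epsilon)\rceil$ successful stages, each contributing $t_k'=\widetilde O(\sqrt{L/\delta_k})$ iterations up to logarithmic factors. Plugging in $\delta_k$: for $\theta\in(0,1/2]$ one gets $\sqrt{L/\delta_k}=O(\sqrt{L}\,c^{1/(2(1-\theta))}\,\varepsilon_{k-1}^{-(1-2\theta)/(2(1-\theta))})$, and summing a geometric-in-$\varepsilon_{k-1}^{-p/2}$ series yields the claimed $\widetilde O(\sqrt{L}\,c^{1/(2(1-\theta))}\max(\epsilon^{-p/2},\log(\varepsilon_0/\epsilon)))$; for $\theta\in(1/2,1]$, $\delta_k$ is a constant independent of $k$, so every stage costs $\widetilde O(\sqrt{L}\,c\,\epsilon_0^{\theta-1/2})$ and the $K$ stages contribute only an extra logarithmic factor absorbed into $\widetilde O$.

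\textbf{Main obstacle.} The delicate step is the algebra that shows $\|G(\x^k_{t_k+1})\|_2\leq \varepsilon_{k-1}/2$ under $c_e\geq c$: Lemma~\ref{lem:5} produces \emph{two} terms (unlike the single-term bound of Lemma~\ref{lem:4} in the smooth case), and one must verify that the $\min$ in the definition~(\ref{eqn:deltak}) of $\delta_k$ (with the cap $L/32$) simultaneously (i) keeps the ADG geometric factor dominated by $\delta_k$, (ii) absorbs the linear $\frac{2}{L}\|G(\x_{k-1})\|_2$ contribution in the $\theta\leq 1/2$ case, and (iii) controls the conditional-restart overhead. Once the constants are tracked carefully — the $1/32$, $1/16$, and $2^{\theta/(1-\theta)}$ factors in~(\ref{eqn:deltak}) are exactly what makes the final inequality work — the remainder is essentially bookkeeping analogous to the proof of Theorem~\ref{smooth_thm}.
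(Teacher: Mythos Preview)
Your proposal is correct and follows essentially the same approach as the paper's proof: combine Theorem~\ref{thm:ong2} with Lemma~\ref{lem:5}, use the threshold $t_k$ to collapse the geometric factor to $\delta_k$, and then verify via the $\min$ in~(\ref{eqn:deltak}) that the two-term bound from Lemma~\ref{lem:5} yields $\|G(\x^k_{t_k+1})\|_2\leq \varepsilon_{k-1}/2$ whenever $c_e>c$ (the paper makes this explicit as $4\delta_k\cdot\frac{2}{L}\leq \frac14$ from $\delta_k\leq L/32$, plus $4\delta_k\cdot c^{1/(1-\theta)}2^{\theta/(1-\theta)}\varepsilon_{k-1}^{\theta/(1-\theta)}\leq \frac{\varepsilon_{k-1}}{4}$ from the other branch of the $\min$). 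The only minor difference is in the bookkeeping: the paper bounds the number of inner cycles per stage by $s_k\leq 1+\log_\gamma(c/c_0)$ and writes the total as $(1+\log_\gamma(c/c_0))\sum_k t_k'$, whereas you count the at most $\lceil\log_\gamma(c/c_0)\rceil$ global restarts separately; both lead to the same $\widetilde O$ bound.
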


\begin{proof}
	\begin{itemize}
	\item We first prove the case when $\theta\in(0,1/2]$. We can easily induce that $F(\x_{k}) - F_*\leq \epsilon_0$ from Theorem~\ref{thm:on2}. Let $t_k =  \lceil \sqrt{\frac{2L}{\delta_k}}\log\frac{\sqrt{L(L+\delta_k)}}{\delta_k}\rceil$.
	Applying Theorem~\ref{thm:ong2} to the $k$-the stage of adaAGC and using Lemma~\ref{lem:5}, we have
\begin{equation}\label{eqn:keyG}
		\begin{aligned}
			&\|G(\x^k_{t_k+1})\|_2
			\leq (\sqrt{L(L+\delta_k)} \left[1 + \sqrt{\frac{\delta_k}{2L}}\right]^{-t_k}+2\sqrt{2}\delta_k)\\
			&\times(\frac{2}{L}\|G(\x_{k-1})\|_2 + c^{\frac{1}{(1-\theta)}}2^{\frac{\theta}{(1-\theta)}}\|G(\x_{k-1})\|_2^{\frac{\theta}{(1-\theta)}} ),
		\end{aligned}
	\end{equation}
	Note that at each stage,  we check two conditions (i) $\|G(\x^k_{\tau+1})\|_2\leq \varepsilon_{k-1}/2$ and  (ii) $\tau=t_k$. If the first condition satisfies first, we proceed to the next stage ($k$ increases by 1). If the second condition satisfies first, then we can claim that $c_e\leq c$ and then we  increase $c_e$ by a factor $\gamma>1$ and then restart the same stage. To verify the claim, assume $c_e>c$ and the second condition satisfies first, i.e., $\tau=t_k$ but $\|G(\x^k_{\tau+1})\|_2>\varepsilon_{k-1}/2$. We will deduce a contradiction. To this end,  we use~(\ref{eqn:keyG}) and note the value of $t_k$
	\begin{align*}
		&\|G(\x^k_{t_k+1})\|_2\leq \left(\delta_k+2\sqrt{2}\delta_k\right)\left(\frac{2}{L}\|G(\x_{k-1})\|_2 + c^{\frac{1}{(1-\theta)}}2^{\frac{\theta}{(1-\theta)}}\|G(\x_{k-1})\|_2^{\frac{\theta}{(1-\theta)}} \right)\\
		&\leq 4\delta_k\left(\frac{2}{L}\|G(\x_{k-1})\|_2 + c^{\frac{1}{(1-\theta)}}2^{\frac{\theta}{(1-\theta)}}\|G(\x_{k-1})\|_2^{\frac{\theta}{(1-\theta)}}\right)\\
		&\leq \frac{\epsilon_{k-1}}{4} + \frac{c^{\frac{1}{(1-\theta)}}2^{\frac{\theta}{(1-\theta)}}\epsilon_{k-1}}{4c_e^{\frac{1}{(1-\theta)}}2^{\frac{\theta}{(1-\theta)}}}\leq \varepsilon_{k-1}/2 = \varepsilon_k,
	\end{align*}
	where the last inequality follows that $c_e>c$. This contradicts to the assumption that $\|G(\x^k_{\tau+1})\|_2>\varepsilon_{k-1}/2$, which verifies our claim. 
	
	Since $c_e$ is increased by a factor $\gamma>1$ whenever condition (ii) holds first, so within at most $\lceil \log_\gamma(c/c_0)\rceil$ times condition (ii) holds first. Similarly with at most $\lceil\log_2\varepsilon_0/\epsilon\rceil$ times that condition (i) holds first before the algorithm terminates. We let $T_k$ denote the total number of iterations in order to make condition (i) satisfies in stage $k$. First, we can see that $c_e\leq \gamma c$.  Let  $\delta'_k = \min(\frac{L}{32},\frac{\varepsilon_{k-1}^{p}}{16(\gamma c2^\theta)^{1/(1-\theta)}})\leq \delta_k$ and $t'_k =   \lceil \sqrt{\frac{2L}{\delta'_k}}\log\frac{\sqrt{L(L+\delta'_k)}}{\delta'_k}\rceil$. Let $s_k$ denote the number of cycles in each stage in order to have $\|G(\x^k_{\tau+1})\|_2\leq \varepsilon_k$. Then $s_k\leq \log_\gamma(c/c_0)+1$.  The total number of iterations of across all stages is bounded by   $\sum_{k=1}^Ks_kt_k$, which is bounded by 
	\begin{align*}
		&\sum_{k=1}^Ks_kt_k \leq (1+\log_{\gamma}(c/c_0))\sum_{k=1}^Kt_k'
	\end{align*}
	Plugging the value of $t_k'$, we can deduce the iteration complexity in Theorem~\ref{thm:main} for $\theta\in(0,1/2]$. 
	\item We consider the proof when $\theta\in(1/2,1]$. Similar to the proof for $\theta\in(0,1/2]$,
	we can easily induce that $F(\x_{k}) - F_*\leq \epsilon_0$ from Theorem \ref{thm:on2}. Let $t_k =  \lceil \sqrt{\frac{2L}{\delta_k}}\log\frac{\sqrt{L(L+\delta_k)}}{\delta_k}\rceil$.
	Applying Theorem~\ref{thm:ong2} to the $k$-the stage of adaAGC and using Lemma~\ref{lem:5}, we have
	\begin{equation}\label{eqn:keyG}
	\begin{aligned}
	&\|G(\x^k_{t_k+1})\|_2
	\leq \left(\sqrt{L(L+\delta_k)} \left[1 + \sqrt{\frac{\delta_k}{2L}}\right]^{-t_k}+2\sqrt{2}\delta_k\right)\left(\frac{2}{L}+2c^2\xi^{2\theta-1}\right)\Vert G(\x_{k-1})\Vert_2,
	\end{aligned}
	\end{equation}
	Note that at each stage,  we check two conditions (i) $\|G(\x^k_{\tau+1})\|_2\leq \varepsilon_{k-1}/2$ and  (ii) $\tau=t_k$. If the first condition satisfies first, we proceed to the next stage ($k$ increases by 1). If the second condition satisfies first, then we can claim that $c_e\leq c$ and then we  increase $c_e$ by a factor $\gamma>1$ and then restart the same stage. To verify the claim, assume $c_e>c$ and the second condition satisfies first, i.e., $\tau=t_k$ but $\|G(\x^k_{\tau+1})\|_2>\varepsilon_{k-1}/2$. We will deduce a contradiction. To this end,  we use~(\ref{eqn:keyG}) and note the value of $t_k$, we have
	\begin{align*}
	&\Vert G(\x_{t_k+1}^k)\Vert_2\leq 4\delta_k\left(\frac{2}{L}+2c^2\xi^{2\theta-1}\right)\Vert G(\x_{k-1})\Vert_2\\
	&\leq \frac{\epsilon_{k-1}}{4}+\frac{8c^2\xi^{2\theta-1}}{32 c_e^2\epsilon_0^{2\theta-1}}\epsilon_{k-1}\leq \frac{\epsilon_{k-1}}{2}=\epsilon_k,
	\end{align*}
	where the last inequality follows that $c_e>c$ and $\xi\leq\epsilon_0$. This contradicts to the assumption that $\|G(\x^k_{\tau+1})\|_2>\varepsilon_{k-1}/2$, which verifies our claim. 
	
	Since $c_e$ is increased by a factor $\gamma>1$ whenever condition (ii) holds first, so within at most $\lceil \log_\gamma(c/c_0)\rceil$ times condition (ii) holds first. Similarly with at most $\lceil\log_2\varepsilon_0/\epsilon\rceil$ times that condition (i) holds first before the algorithm terminates. We let $T_k$ denote the total number of iterations in order to make condition (i) satisfies in stage $k$. First, we can see that $c_e\leq \gamma c$.  Let  $\delta'_k = \min(\frac{L}{32},\frac{1}{32(\gamma c)^2\epsilon_0^{2\theta-1}})\leq \delta_k$ and $t'_k =   \lceil \sqrt{\frac{2L}{\delta'_k}}\log\frac{\sqrt{L(L+\delta'_k)}}{\delta'_k}\rceil$. Let $s_k$ denote the number of cycles in each stage in order to have $\|G(\x^k_{\tau+1})\|_2\leq \varepsilon_k$. Then $s_k\leq \log_\gamma(c/c_0)+1$.  The total number of iterations of across all stages is bounded by   $\sum_{k=1}^Ks_kt_k$, which is bounded by 
	\begin{align*}
	&\sum_{k=1}^Ks_kt_k \leq (1+\log_{\gamma}(c/c_0))\sum_{k=1}^Kt_k'.
	\end{align*}
	Plugging the value of $t_k'$, we can deduce the iteration complexity in Theorem \ref{thm:main} for $\theta\in(1/2,1]$. 
	
	\end{itemize}
\end{proof}

Before ending this section, we would like to remark that if the smoothness parameter $L$ is unknown, one can also employ the backtracking technique pairing with each update to search for $L$~\citep{RePEc:cor:louvco:2007076}. 

\section{Applications}\label{sec:app}
In this section, we present some applications in machine learning and corollaries of our main theorems. In particular, we consider the regularized problems with a smooth loss: 
\begin{align}\label{eqn:pn}
	\min_{\x\in\R^d}\frac{1}{n}\sum_{i=1}^n\ell(\x^{\top}\a_i, b_i) + \lambda R(\x),
\end{align}
where $(\a_i, b_i), i=1,\ldots, n$ denote a set of training examples, $R(\x)$ could be  the $\ell_1$ norm ($\|\x\|_1$), the $\ell_\infty$ norm ($\|\x\|_\infty$),  or a general form $\|\x\|_p^s$ where $p\geq 1$ and $s\in\mathbb N$, or  a huber norm~\citep{DBLP:conf/pkdd/ZadorozhnyiBMSK16}
where $R(\x) = \sum_{i=1}^dh(x_i)$ and $h(x_i)$ is the huber function
\begin{align}\label{eqn:hub}
	h(x)= \left\{\begin{array}{ll}\delta(|x| - \frac{\delta}{2}) & \text{if } |x|\geq \delta/2\\ \frac{\delta^2}{2}&\text{ otherwise}\end{array}\right..
\end{align}

%

We can also consider a broader family of problems that aim to learn a set of models (e.g., in multi-class, multi-label and multi-task  learning): 
\begin{align}\label{eqn:png}
	\min_{\x_1,\ldots, \x_K\in\R^d}\frac{1}{n}\sum_{i=1}^n\sum_{k=1}^K\ell(\x_k^{\top}\a_i, b_{ik}) + \lambda\sum_{k=1}^K\|\x\|_p,
\end{align}
where  the regularizer $\sum_{k=1}^K\|\x\|_p$ is known as $\ell_{1,p}$ norm. 

Next, we present several results about the HEB condition to cover a broad family of loss functions that enjoy the faster convergence of PG and adaAPC. 
\begin{cor}
	Assume the loss function $\ell(z,b)$ is nonnegative,  convex, smooth and semi-algebraic, the the problems in~(\ref{eqn:png}) and~(\ref{eqn:pn}) with $R(\x) = \|\x\|_p^s$ or the huber norm, where $s\in\mathbb N$ and $p\geq 1$ is a rational number,  satisfy the HEB condition with  $\theta\in(0,1]$ on any sublevel set $S_\xi$ with $\xi> 0$. Hence PG have a global convergence speed of $o(1/t)$.  
\end{cor}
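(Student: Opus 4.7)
The plan is to reduce the corollary to the two earlier results already at our disposal: Proposition~\ref{prop:5} (any proper, coercive, convex, lower semicontinuous, semi-algebraic function satisfies HEB on every sublevel set) and Theorem~\ref{thm:pgso} ($o(1/t)$ convergence of PG under exactly these hypotheses). So the whole task boils down to verifying that the composite objectives in~(\ref{eqn:pn}) and~(\ref{eqn:png}) fall under the hypotheses of Proposition~\ref{prop:5}; the $o(1/t)$ conclusion then follows immediately.

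First I would check the four ``soft'' properties: convexity, properness, lower semicontinuity, and coercivity. Convexity is inherited from the convexity of $\ell(\cdot, b)$ (composed with the linear map $\x \mapsto \x^{\top}\a_i$) and from the convexity of each candidate regularizer ($\ell_p^s$ with $p\ge 1$, $s\in\mathbb N$, and the huber norm defined in~(\ref{eqn:hub})). Properness and lower semicontinuity come for free because every summand is finite and continuous on $\R^d$. For coercivity I would argue that $\lambda R(\x)\to\infty$ as $\|\x\|_2\to\infty$ (true for any norm to a positive integer power and also for $\sum_i h(x_i)$, which grows linearly in $|x_i|$ for large arguments), while the loss term is bounded below by $0$; hence the sum is coercive, which in particular guarantees that every sublevel set $\mathcal S_\xi$ is compact and nonempty for $\xi>0$ large enough.

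The main technical step, and the one I expect to require the most care, is verifying that each objective is \emph{semi-algebraic}. For the loss, semi-algebraicity is assumed, and the class of semi-algebraic functions is closed under finite sums and composition with linear maps, so $\frac{1}{n}\sum_i \ell(\x^{\top}\a_i, b_i)$ (and its multi-task analogue in~(\ref{eqn:png})) is semi-algebraic. For the huber regularizer, each coordinate function $h(x_i)$ is piecewise polynomial with polyhedral pieces, hence semi-algebraic, and semi-algebraicity is preserved under finite sums. The delicate case is $R(\x)=\|\x\|_p^s$ for rational $p = q_1/q_2 \ge 1$ and $s\in\mathbb N$: I would handle this by describing the graph $\{(\x, t): \|\x\|_p^s = t,\, t\ge 0\}$ through the auxiliary system
\begin{equation*}
t^{q_1}= \Bigl(\sum_{i=1}^d u_i\Bigr)^{s q_2},\qquad u_i^{q_2}=|x_i|^{q_1},\qquad u_i\ge 0,
\end{equation*}
which is a finite Boolean combination of polynomial equalities and inequalities (after splitting $|x_i|$ by sign), so defines a semi-algebraic set; the graph of $R$ is its projection, and projections of semi-algebraic sets are semi-algebraic by Tarski--Seidenberg. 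The same reasoning extends to the $\ell_{1,p}$ regularizer in~(\ref{eqn:png}).

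Once these verifications are in place, Proposition~\ref{prop:5} gives the existence of $\theta\in(0,1]$ and $c<\infty$ such that HEB~(\ref{eqn:HEB}) holds on every sublevel set $\mathcal S_\xi$ with $\xi>0$. Since the objective is additionally proper, coercive, convex, lower semicontinuous and semi-algebraic, Theorem~\ref{thm:pgso} applies verbatim and yields the $o(1/t)$ global convergence rate of PG (under either Option I or Option II of Algorithm~\ref{alg:1}), completing the corollary.
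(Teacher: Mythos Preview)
Your proposal is correct and follows exactly the route the paper takes: verify that the objective is proper, coercive, convex, lower semicontinuous and semi-algebraic, then invoke Proposition~\ref{prop:5} for HEB and Theorem~\ref{thm:pgso} for the $o(1/t)$ rate. The only difference is one of detail: the paper dispatches the semi-algebraicity of $\|\x\|_p$ for rational $p$ by citing~\citep{Bolte:2014:PAL:2650160.2650169}, whereas you give an explicit Tarski--Seidenberg argument, which is perfectly fine (and arguably more self-contained).
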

{\bf Remark:} Because of the regularization, the objective function is coercive and proper.  The $\ell_p$ norm with $p$ being a rational number is a semi-algebraic function~\citep{Bolte:2014:PAL:2650160.2650169}. Finite sum of semi-algebraic functions and composition of semi-algebraic functions are also semi-algebraic. Then we can use Proposition~\ref{prop:5} and Theorem~\ref{thm:pgso} to prove the above corollary. 
\begin{cor}\label{cor:f}
	Assume the loss function $\ell(z,b)$ is nonnegative, convex, smooth and piecewise quadratic, then the problems in~(\ref{eqn:pn}) and~(\ref{eqn:png}) with $\ell_1$ norm, $\ell_\infty$ norm, Huber norm and $\ell_{1,\infty}$ norm regularization satisfy the HEB condition with $\theta = 1/2$ on any sublevel set $S_\xi$ with $\xi> 0$. Hence adaAGC has a global linear convergence in terms of the proximal gradient's norm and a square root dependence on the condition number. 
\end{cor}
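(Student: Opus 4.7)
The plan is to reduce Corollary~\ref{cor:f} to Proposition~\ref{prop:quadratic} (piecewise convex quadratic functions satisfy HEB with $\theta=1/2$) and then invoke Theorem~\ref{thm:main} with $\theta=1/2$. The entire argument is structural: verify that every summand of $F$ is piecewise convex quadratic, combine, and read off the rate.

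First, I would check that each data-fitting term $\ell(\x^{\top}\a_i,b_i)$ is piecewise convex quadratic in $\x$. By hypothesis $\ell(z,b)$ is a univariate convex smooth piecewise quadratic function of $z$, so its composition with the linear form $z=\a_i^{\top}\x$ is piecewise convex quadratic on the finite polyhedral partition of $\R^d$ obtained by pulling back the breakpoints through $\a_i^{\top}\x$. Squared loss, squared hinge loss, and the huber loss all fit this template. Next, I would check the regularizers: $\|\x\|_1$, $\|\x\|_\infty$, and $\|X\|_{1,\infty}=\sum_i\max_k|X_{ik}|$ are all polyhedral, hence piecewise linear (a special case of piecewise quadratic with degree at most $1$), on the polyhedral partitions induced by the sign patterns and, for $\ell_\infty$-type norms, by which coordinate attains the maximum. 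The Huber norm defined in~(\ref{eqn:hub}) is piecewise quadratic directly from its definition as a sum of the scalar pieces $h(x_i)$.

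Second, I would assemble $F$. A finite sum of piecewise convex quadratic functions is piecewise convex quadratic on the common refinement of the underlying polyhedral partitions, which is still a finite polyhedral partition of $\R^d$ (or of $\R^{d\times K}$ in the case of~(\ref{eqn:png})); convexity of $F$ follows from convexity of each summand. Applying Proposition~\ref{prop:quadratic} then yields, for every $\xi>0$, a constant $0<c<\infty$ with $D(\x,\Omega_*)\leq c(F(\x)-F_*)^{1/2}$ on $\S_\xi$, i.e.\ HEB with $\theta=1/2$.

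Finally, plugging $\theta=1/2$ into Theorem~\ref{thm:main} gives $p=(1-2\theta)/(1-\theta)=0$ and $c^{1/(2(1-\theta))}=c$, so the iteration complexity to reach $\|G(\x_K)\|_2\leq\epsilon$ is $\widetilde O(\sqrt{L}\,c\,\log(\varepsilon_0/\epsilon))$. This is a global linear rate in the proximal gradient's norm, with a square-root dependence on the condition number $C=Lc^2$, as claimed. The only non-routine step is the bookkeeping for the $\ell_{1,\infty}$ regularizer in~(\ref{eqn:png}): one must exhibit an explicit finite polyhedral partition of $\R^{d\times K}$ on which both the losses and the regularizer are simultaneously quadratic, but since the partition for $\sum_i\max_k|X_{ik}|$ is determined by the sign of each entry and the argmax within each row, the refinement remains finite polyhedral and the argument goes through.
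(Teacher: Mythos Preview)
Your proposal is correct and follows essentially the same approach as the paper: the paper's own remark simply states that the corollary ``follows directly from Proposition~\ref{prop:quadratic} and Theorem~\ref{thm:main},'' and you have filled in exactly those structural details (each summand is piecewise convex quadratic, the sum is piecewise convex quadratic on the common polyhedral refinement, apply Proposition~\ref{prop:quadratic} to get $\theta=1/2$, then read off the linear rate from Theorem~\ref{thm:main}). The only minor point is that the existence of a nonempty optimal set $\Omega_*$ (needed to apply Proposition~\ref{prop:quadratic}) follows from coercivity of $F$ via the nonnegative loss plus norm regularizer, which you could state explicitly.
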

{\bf Remark:} The  above corollary follows directly from Proposition~\ref{prop:quadratic} and Theorem~\ref{thm:main}.  If the loss function is a logistic loss and the regularizer is a polyhedral function (e.g., $\ell_1$, $\ell_\infty$ and $\ell_{1,\infty}$ norm), we can prove the same result as above using Proposition~\ref{prop:log}.  Examples of  convex, smooth and piecewise convex quadratic  loss functions include: square loss: $\ell(z, b) = (z - b)^2$ for $b\in\R$; squared hinge loss: $\ell(z,b) = \max(0, 1 - b z)^2$ for $b\in\{1, -1\}$; and  huber loss: $\ell(z,b)= h(z-b)$ with $h(x)$ defined in~(\ref{eqn:hub}).

		Finally, it is worth mentioning that the result in Corollary~\ref{cor:f}  is more general and better than many previous work. For example, \citet{DBLP:journals/siamjo/Xiao013} and \citet{DBLP:conf/icml/LinX14} only considered the lasso problem consisting of a square loss and $\ell_1$ norm and derived a linear convergence under the {\it restricted eigen-value condition}. The PG has been shown to have a linear convergence for solving the lasso problem~\citep{arxiv:1510.08234,DBLP:journals/corr/nesterov16linearnon,DBLP:conf/pkdd/KarimiNS16,DBLP:journals/corr/abs/1606.00269,Drusvyatskiy16a}, but it has a linear dependence on the condition number. Many works have considered the structured smooth problem $f(\x) = h(A\x) + g(\x)$, where $h(\cdot)$ is a strongly convex function on any compact set~\citep{DBLP:conf/nips/HouZSL13,DBLP:conf/icml/ZhouZS15,DBLP:journals/corr/So13,Luo:1992a,Luo:1992b,Luo:1993}. Note that this structured family does not cover squared hinge loss and huber loss, and mostly the convergence results in these work are local convergence (i.e., asymptotic convergence) instead of global convergence. 
		
		\section{Experimental Results}\label{section:7}

		We conduct some experiments to demonstrate the effectiveness of adaAGC for solving problems of type~(\ref{eqn:opt}). Specifically, we compare adaAGC and PG with option II for optimizing the squared hinge loss (classification), square loss (regression), huber loss ($\delta=1$) (regression) with $\ell_1$ and $\ell_\infty$ regularization, which are cases of~(\ref{eqn:pn}), and we also consider the $\ell_1$ constrained $\ell_p$ norm regression~(\ref{eqn:L1LP}) with varying $p$. We use four datasets from the LibSVM website~\citep{fan2011libsvm}, which are splice ($n=1000, d=60$), german.numer ($n=1000, d=24$) for classification, and  bodyfat ($n=252, d=14$), cpusmall ($n=8192,d=12$) for regression. For problems covered by~(\ref{eqn:pn}), we fix $\lambda=\frac{1}{n}$, and  the parameter $s$ in~(\ref{eqn:L1LP}) is  set to $s=100$ .

\begin{table}[t]
			\caption{Squared hinge loss with $\ell_1$ norm regularization} 
			\label{exp1}
			\begin{center}
				\begin{tabular}{llllll}
					\hline
					Algorithm & dataset & $\epsilon=10^{-4}$ & $\epsilon=10^{-5}$ & $\epsilon=10^{-6}$ & $\epsilon=10^{-7}$\\ 
					\hline
					PG & splice & 2201 & 2201 & 2201 & 2201\\
					adaAGC & splice & 2123  & 2123  & 2123  & 2123\\
					\midrule
					PG &  german.numer &1014 & 1492 & 1971 & 2450\\
					adaAGC & german.numer & 762 & 1010 & 1338 & 1586\\\hline
				\end{tabular}
			\end{center}
			\caption{Square loss with $\ell_1$ norm regularization}
			\label{exp2}
			\vspace*{-0.15in}
			\begin{center}
				\begin{tabular}{llllll}
					\hline
					Algorithm & dataset & $\epsilon=10^{-4}$ & $\epsilon=10^{-5}$ & $\epsilon=10^{-6}$ & $\epsilon=10^{-7}$\\ 
					\hline
					PG    & bodyfat & 366637 & 1110329 & 1871925 & 1948897 \\
					adaAGC & bodyfat & 15414 & 26174 & 40526 & 40905\\
					\midrule
					PG & cpusmall & 109298 & 159908 & 170915 & 170915\\
					adaAGC & cpusmall & 9571  & 12623  & 13571  & 13571\\\hline
				\end{tabular}
			\end{center}
		
			\caption{Huber loss with $\ell_1$ norm regularization}
			\label{exp3}
			\vspace*{-0.15in}
			\begin{center}
					\begin{tabular}{llllll}
						\hline
						Algorithm & dataset & $\epsilon=10^{-4}$ & $\epsilon=10^{-5}$ & $\epsilon=10^{-6}$ & $\epsilon=10^{-7}$\\ 
						\hline
						PG    & bodyfat & 258723 & 423181 & 602043 & 681488 \\
						adaAGC & bodyfat & 16976 & 16980 & 23844 & 25702\\
						\midrule
						PG & cpusmall & 74387 & 112702 & 159461 & 190640\\
						adaAGC & cpusmall & 26958  & 32070 & 36698  & 38222\\\hline
					\end{tabular}
				\end{center}
				\end{table}
				\begin{table}[t]
				\caption{$\ell_1$ constrained $\ell_p$ norm regression on bodyfat ($\epsilon=10^{-3}$)}\label{exp4}
				\begin{center}
						\begin{tabular}{lllll}
							\hline
							Algorithm & $p=2$ & $p=4$ & $p=6$ & $p=8$\\ 
							\hline
							PG & 250869 (1) & 979401 (3.90) & 1559753 (6.22) & 4015665 (16.00)\\
							adaAGC & 8710 (1) & 17494 (2.0) & 22481 (2.58) & 33081 (3.80)\\\hline
						\end{tabular}
					\end{center}
				\end{table}

		We use the backtracking in both PG and adaAGC to search for the smoothness parameter. 
		In adaAGC, we set $c_0=10,\gamma=2$. 
		For fairness, each algorithm starts at the same initial point, which is set to be zero, and we stop each algorithm when the norm of its proximal gradient is less than a prescribed threshold $\epsilon$ and {\bf report the total number of proximal mappings}. 
		The results are presented in the Tables~\ref{exp1},~\ref{exp2},~\ref{exp3},~\ref{exp4},~\ref{exp21},~\ref{exp22} and \ref{exp23}, which clearly show that adaAGC converges considerably faster than PG. It is notable that for some problems (see Table~\ref{exp1},~\ref{exp21})  the number of proximal mappings is the same value for achieving different precision $\epsilon$. This is because that value is the minimum number of proximal mappings such that the magnitude of the proximal gradient suddenly becomes zero. In Table~\ref{exp4}, the numbers in parenthesis indicate the increasing factor in the number of proximal mappings compared to the base case $p=2$, which show that increasing factors of adaAGC are approximately the square root of that of PG and hence are consistent with our theory.


		\begin{table}[t]
					\caption{Squared hinge loss with $\ell_\infty$ regularization}
					\label{exp21}
					\begin{center}
						\begin{tabular}{llllll}
							\hline
							Algorithm & dataset & $\epsilon=10^{-4}$ & $\epsilon=10^{-5}$ & $\epsilon=10^{-6}$ & $\epsilon=10^{-7}$\\ 
							\hline
							PG    & splice & 3514 & 3724 & 3724 & 3724 \\
							adaAGC & splice & 2336 & 2456 & 2456 & 2456\\
							\midrule
							PG & german.numer & 898 & 898 & 898 & 898\\
							adaAGC & german.numer & 742  & 742  & 742  & 742\\\hline
						\end{tabular}
					\end{center}
					

					\caption{Squared loss with $\ell_\infty$ regularization}
					\label{exp22}
								\vspace*{-0.15in}

					\begin{center}
						\begin{tabular}{llllll}
							\hline
							Algorithm & dataset & $\epsilon=10^{-4}$ & $\epsilon=10^{-5}$ & $\epsilon=10^{-6}$ & $\epsilon=10^{-7}$\\ 
							\hline
							PG    & bodyfat & 542414 & 652613 & 778869 & 800050 \\
							adaAGC & bodyfat & 23226 & 24990 & 30646 & 30864\\
							\midrule
							PG & cpusmall & 139505 & 204120 & 210874 & 210874\\
							adaAGC & cpusmall & 10828  & 14276  & 15020  & 15020\\\hline
						\end{tabular}
					\end{center}

					\caption{Huber loss with $\ell_\infty$ regularization}
					\label{exp23}
			\vspace*{-0.15in}
					\begin{center}
						\begin{tabular}{llllll}
							\hline
							Algorithm & dataset & $\epsilon=10^{-4}$ & $\epsilon=10^{-5}$ & $\epsilon=10^{-6}$ & $\epsilon=10^{-7}$\\ 
							\hline
							PG    & bodyfat & 419316 & 531999 & 651092 & 709486 \\
							adaAGC & bodyfat & 15744 & 18072 & 23684 & 25391\\
							\midrule
							PG & cpusmall & 75346 & 171052 & 240050 & 270540\\
							adaAGC & cpusmall & 27225  & 36745  & 41461  & 42925\\\hline
						\end{tabular}
					\end{center}
				\end{table}

				\section{Conclusions}

				In this paper, we have considered smooth composite optimization problems under a general H\"{o}lderian error bound condition. We have established adaptive iteration complexity to the  H\"{o}lderian error bound condition of proximal gradient and accelerated proximal gradient methods. To eliminate the dependence on the unknown parameter in the  error bound condition and enjoy the faster convergence of accelerated proximal gradient method, we have developed a parameter-free adaptive accelerated gradient converging method using the magnitude of the (proximal) gradient as a measure for restart and termination. We have also considered a broad family of norm regularized problems in machine learning and showed faster convergence of the proposed adaptive accelerated gradient converging method.

\section*{Appendix}
We present some lemmas and propositions that are useful to our analysis.
\begin{prop}\citep[Theorem 5 in v3]{arxiv:1510.08234}
	\label{prop_bolte}
	Let $f:H\rightarrow (-\infty,+\infty]$ be a proper, convex and lower semi-continuous with $\min f = f_*$. Let $r_0>0$, $\varphi\in\{\varphi\in C^0[0,r_0)\cap C^1(0,r_0),\varphi(0)=0,\varphi \text{ is concave}, \varphi>0\}$, $c>0,\rho>0$, and $\bar{x}\in \arg\min f$. If $s\varphi'(s)\geq c\varphi(s)$ for all $s\in(0,r_0)$, and $\varphi(f(x) - f_*)\geq D(x,\arg\min f)$ for all $x\in[0<f<r_0]\cap B(\bar{x},\rho)$, then $\varphi'(f(x)-f_*)\Vert \partial f(x)\Vert_2\geq c$ for all $x\in[0<f<r_0]\cap B(\bar{x},\rho)$.
	\end{prop}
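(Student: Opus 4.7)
The plan is to combine three ingredients: convexity of $f$ (which turns distance-to-optimum into a lower bound on the subgradient norm), the assumed error-bound hypothesis $\varphi(f(x)-f_*) \geq D(x,\arg\min f)$, and the differential inequality $s\varphi'(s) \geq c\varphi(s)$.

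Fix $x \in [0 < f < r_0] \cap B(\bar{x},\rho)$ and write $\delta = f(x)-f_*$, so $\delta \in (0,r_0)$. The first step is a standard convexity bound: for any $u \in \partial f(x)$ and any minimizer $y \in \arg\min f$,
\[
f_* = f(y) \geq f(x) + u^{\top}(y-x),
\]
so $\delta \leq u^{\top}(x-y) \leq \|u\|_2 \|x-y\|_2$. Taking $y$ to be the projection of $x$ onto $\arg\min f$ and then the infimum over $u \in \partial f(x)$ yields the key inequality
\[
\delta \leq \|\partial f(x)\|_2 \cdot D(x, \arg\min f).
\]

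The second step invokes the error-bound hypothesis $D(x,\arg\min f) \leq \varphi(\delta)$, which holds by assumption on the set where $x$ lives. Substituting gives $\delta \leq \|\partial f(x)\|_2 \, \varphi(\delta)$, equivalently
\[
\|\partial f(x)\|_2 \geq \frac{\delta}{\varphi(\delta)}.
\]
Since $\varphi > 0$ on $(0,r_0)$ this quotient is well-defined. The third step applies the differential inequality $s\varphi'(s) \geq c\varphi(s)$ at $s = \delta$, which rearranges to $\delta/\varphi(\delta) \geq c/\varphi'(\delta)$ (using $\varphi' > 0$, which follows from $\varphi$ concave, positive on $(0,r_0)$, and $\varphi(0)=0$). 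Chaining the two estimates produces
\[
\varphi'(\delta) \, \|\partial f(x)\|_2 \geq \varphi'(\delta) \cdot \frac{\delta}{\varphi(\delta)} \geq c,
\]
which is exactly the desired conclusion.

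There is no real obstacle; the proof is essentially a two-line substitution once one identifies the correct chain. The only point that deserves a sentence of justification is the positivity of $\varphi'$ on $(0,r_0)$: since $\varphi \in C^1(0,r_0)$ is concave with $\varphi(0) = 0$ and $\varphi > 0$ on $(0,r_0)$, the secant slope $\varphi(s)/s$ is positive and the concavity forces $\varphi'(s) \geq \varphi(s)/s > 0$, so dividing by $\varphi'(\delta)$ is legitimate. Everything else is direct algebra on the two inequalities already in hand.
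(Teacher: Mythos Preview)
The paper does not actually prove this proposition: it is quoted verbatim from \cite{arxiv:1510.08234} and only used (in the proof of Lemma~\ref{lem:4}) as a black box. So there is no ``paper's own proof'' to compare against. Your argument is correct in substance and is essentially the standard one-line derivation: convexity gives $f(x)-f_* \leq \|\partial f(x)\|_2\, D(x,\arg\min f)$, the error-bound hypothesis turns the distance into $\varphi(f(x)-f_*)$, and the differential inequality converts $\delta/\varphi(\delta)$ into $c/\varphi'(\delta)$.

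There is one slip in your side remark justifying $\varphi'>0$. You write that concavity with $\varphi(0)=0$ ``forces $\varphi'(s)\geq \varphi(s)/s$''; in fact concavity gives the \emph{opposite} inequality $\varphi'(s)\leq \varphi(s)/s$ (the tangent line at $s$ lies above the graph, so evaluating at $0$ gives $\varphi(s)-s\varphi'(s)\geq \varphi(0)=0$; check $\varphi(s)=\sqrt{s}$). This does not damage the proof, because $\varphi'(\delta)>0$ already follows directly from the hypothesis $s\varphi'(s)\geq c\varphi(s)$ applied at $s=\delta$: since $c>0$ and $\varphi(\delta)>0$, one gets $\varphi'(\delta)\geq c\varphi(\delta)/\delta>0$. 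With that correction, your chain
\[
\varphi'(\delta)\,\|\partial f(x)\|_2 \;\geq\; \varphi'(\delta)\cdot\frac{\delta}{\varphi(\delta)} \;\geq\; c
\]
is valid, and indeed the intermediate rearrangement $\delta/\varphi(\delta)\geq c/\varphi'(\delta)$ is never really needed---the second inequality above is just the differential hypothesis divided by $\varphi(\delta)>0$.
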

	The following proposition is a rephrase of Theorem 3.5 in~\citep{Drusvyatskiy16a}. 
	\begin{prop}
		\label{prop_lewis}
		If $f$ is $L$-smooth and convex, $g$ is proper, convex and lower semi-continuous, $F(\x)=f(\x)+g(\x)$, $\eta>0$, and define
		\begin{equation*}
		P_{\eta F}(\x)=\arg\min\limits_{\u}\frac{1}{2}\Vert \u-\x\Vert_2^2+\eta F(\u).
		\end{equation*}
		Then the following inequality holds:
		\begin{equation*}
		\left\Vert\frac{1}{\eta}(\x-P_{\eta F}(\x))\right\Vert_2\leq(1+L\eta)\Vert G_\eta(\x)\Vert_2.
		\end{equation*}
		\end{prop}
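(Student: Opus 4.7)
The plan is to use the optimality conditions characterizing the two proximal points jointly and to exploit the nonexpansiveness of the proximal operator of $g$. Write $p := P_{\eta F}(\x)$ and $q := P_{\eta g}(\x - \eta \nabla f(\x))$, so that $G_\eta(\x) = (\x - q)/\eta$ and the target inequality reads $\|\x - p\|_2 \leq (1 + L\eta)\|\x - q\|_2$.

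First, I would rewrite the optimality condition $(\x - p)/\eta \in \nabla f(p) + \partial g(p)$ in the equivalent fixed-point form
\[
p = P_{\eta g}\bigl(\x - \eta\nabla f(p)\bigr),
\]
obtained by isolating the $\partial g$ term and invoking the definition of $P_{\eta g}$. The point $q$ is, by definition, the image under the same proximal map but with the gradient evaluated at $\x$ rather than at $p$. Consequently, the (firm) nonexpansiveness of $P_{\eta g}$ combined with the $L$-Lipschitz continuity of $\nabla f$ yields the key intermediate estimate
\[
\|p - q\|_2 \;\leq\; \eta\,\|\nabla f(\x) - \nabla f(p)\|_2 \;\leq\; \eta L\,\|\x - p\|_2.
\]

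Second, I would decompose $(\x - p)/\eta$ through its subgradient identity. Writing $(\x - p)/\eta = \nabla f(p) + \xi_p$ with $\xi_p \in \partial g(p)$ and $G_\eta(\x) = \nabla f(\x) + \xi_q$ with $\xi_q \in \partial g(q)$, the monotonicity of $\partial g$ pairing $\xi_p - \xi_q$ against $p - q$ supplies a cancellation that, together with the estimate above and the triangle inequality $\|\x - p\|_2 \leq \|\x - q\|_2 + \|q - p\|_2$, yields the advertised bound $\|(\x - p)/\eta\|_2 \leq (1 + L\eta)\|G_\eta(\x)\|_2$.

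The main obstacle will be extracting the precise constant $(1 + L\eta)$ valid uniformly in $\eta > 0$, rather than the weaker $1/(1 - \eta L)$ that a naive triangle inequality produces only when $\eta L < 1$. The refinement relies on the fact that $p$ is the minimizer of the $1$-strongly convex objective $\tfrac{1}{2}\|u - \x\|_2^2 + \eta F(u)$, so that its optimality is very rigid; combining this rigidity with the monotonicity of $\partial g$ and the $L$-smoothness of $f$ provides the desired algebraic tightening, essentially reproducing Theorem~3.5 of Drusvyatskiy--Lewis.
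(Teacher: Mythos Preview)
The paper does not prove this proposition; it merely cites it as a rephrasing of Theorem~3.5 in Drusvyatskiy--Lewis. So there is no in-paper argument to compare against, and your attempt must stand on its own.

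Your first step is correct and gives $\|p-q\|_2\le \eta L\,\|\x-p\|_2$. The gap is in your second step: the ``cancellation'' you describe from the monotonicity of $\partial g$ produces exactly the same inequality again (with $\|\x-p\|$ on the right), and then the triangle inequality $\|\x-p\|\le\|\x-q\|+\|q-p\|$ only yields $\|\x-p\|\le(1-\eta L)^{-1}\|\x-q\|$ for $\eta L<1$. You acknowledge this as ``the main obstacle'' but the closing paragraph does not resolve it; invoking the $1$-strong convexity of the Moreau objective and ``rigidity'' is not a proof, and no amount of algebraic tightening will turn $\|p-q\|\le\eta L\|\x-p\|$ into the constant $(1+\eta L)$ for all $\eta>0$.

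The missing idea is to pair the monotonicity at the level of $\partial F$ rather than $\partial g$. You already have $(\x-p)/\eta\in\partial F(p)$. For $q$, add $\nabla f(q)$ to your $\xi_q$ to obtain
\[
w_q \;:=\; \frac{\x-q}{\eta}-\nabla f(\x)+\nabla f(q)\;\in\;\partial F(q).
\]
Monotonicity of $\partial F$ then gives
\[
0\;\le\;\Bigl\langle \frac{\x-p}{\eta}-w_q,\;p-q\Bigr\rangle
\;=\;-\frac{1}{\eta}\|p-q\|_2^2+\langle\nabla f(\x)-\nabla f(q),\,p-q\rangle,
\]
whence $\|p-q\|_2\le \eta L\,\|\x-q\|_2$ with $q$, not $p$, on the right. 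Now the triangle inequality $\|\x-p\|_2\le\|\x-q\|_2+\|p-q\|_2$ immediately yields $\|\x-p\|_2\le(1+\eta L)\|\x-q\|_2$ for every $\eta>0$, which is the claim.
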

\subsection{Perturbation of a Strongly Convex Problem}
\begin{lemma}
	\label{strongly}
	Let $h(\x)$ be a $\sigma$-strongly convex function, $\x_a^*$ and $\x_b^*$ be the optimal solutions to the following problems. 
	\begin{align*}
	\x_a^* = \min_{\x\in\R^d}\a^{\top}\x + h(\x).
	\end{align*}
	\begin{align*}
	\x_b^* = \min_{\x\in\R^d}\b^{\top}\x + h(\x).
	\end{align*}
	Then
	\[
	\|\x_a^* - \x_b^*\|_2 \leq \frac{2\|\a - \b\|_2}{\sigma}.
	\]
\end{lemma}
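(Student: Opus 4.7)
The plan is to combine the strong convexity of one problem with the optimality of the other in an asymmetric way, which loses a factor of two relative to a fully symmetric argument but still matches the stated bound. Let $h_a(\x) = \a^\top \x + h(\x)$ and $h_b(\x) = \b^\top \x + h(\x)$. Both are $\sigma$-strongly convex because $h$ is (adding a linear term preserves the modulus).

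First I would use strong convexity of $h_a$ at its unique minimizer $\x_a^*$, evaluated at $\x_b^*$:
\[
h_a(\x_b^*) \geq h_a(\x_a^*) + \frac{\sigma}{2}\|\x_a^* - \x_b^*\|_2^2,
\]
which after expanding $h_a$ and rearranging gives
\[
h(\x_b^*) - h(\x_a^*) \geq \a^\top(\x_a^* - \x_b^*) + \frac{\sigma}{2}\|\x_a^* - \x_b^*\|_2^2.
\]
Next I would use only the optimality of $\x_b^*$ for $h_b$ (no strong convexity on this side), i.e.\ $h_b(\x_b^*) \leq h_b(\x_a^*)$, which rearranges to
\[
h(\x_b^*) - h(\x_a^*) \leq \b^\top(\x_a^* - \x_b^*).
\]

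Chaining the two displays eliminates the $h$ terms:
\[
(\b - \a)^\top(\x_a^* - \x_b^*) \geq \frac{\sigma}{2}\|\x_a^* - \x_b^*\|_2^2.
\]
The Cauchy--Schwarz inequality then yields $\|\a-\b\|_2\,\|\x_a^* - \x_b^*\|_2 \geq \tfrac{\sigma}{2}\|\x_a^* - \x_b^*\|_2^2$, and dividing by $\|\x_a^* - \x_b^*\|_2$ (the trivial case $\x_a^* = \x_b^*$ is immediate) gives the claimed bound.

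There is no real obstacle here; the only subtlety is deciding how to split the use of strong convexity between the two problems. A fully symmetric argument (using strong convexity at both minimizers) actually yields the sharper constant $1/\sigma$, but the factor-of-two version above suffices for the downstream application in Theorem~\ref{thm:ong2}, where this lemma is invoked with $\sigma = 1$, $\a = \eta\nabla f(\x)$, and $\b = \eta(\nabla f(\x) + \delta(\x-\x_0))$ so that $\|\a-\b\|_2 = \eta\delta\|\x - \x_0\|_2$, producing the stated estimate $\|\x_\eta^+ - \x_\eta^+(\delta)\|_2 \leq 2\eta\delta\|\x - \x_0\|_2$.
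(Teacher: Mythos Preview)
Your proof is correct and is essentially identical to the paper's own argument: the paper also uses strong convexity of $H_a(\x)=\a^\top\x+h(\x)$ at its minimizer $\x_a^*$ to get $\tfrac{\sigma}{2}\|\x_a^*-\x_b^*\|_2^2\leq H_a(\x_b^*)-H_a(\x_a^*)$, then rewrites in terms of $H_b$ and applies only the optimality inequality $H_b(\x_b^*)\leq H_b(\x_a^*)$ before Cauchy--Schwarz. Your side remark that a symmetric use of strong convexity would sharpen the constant to $1/\sigma$ is correct and goes slightly beyond the paper.
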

\begin{proof}
	Let $H_a(\x) = h(\x) + \a^{\top}\x$ and  $H_b(\x) = h(\x) + b^{\top}\x$. By the strong convexity of $h(\x)$, we have 
	\begin{align*}
	&\frac{\sigma}{2}\|\x_a^* - \x_b^*\|_2^2\\
	&\leq H_a(\x_b^*) - H_a(\x_a^*)\\
	&=H_b(\x_b^*) + (\a - \b)^{\top}\x_b^*- H_b(\x_a^*) -(\a - \b)^{\top}\x_a^* \\
	& \leq (\a - \b)^{\top}(\x_b^* - \x_a^*)\leq \|\x_a^* -\x_b^*\|_2 \|\a - \b\|_2,
	\end{align*}
	where we use the fact $H_b(\x_b^*)\leq H_b(\x_a^*)$. From the above inequality, we can get $\|\x_a^* - \x_b^*\|_2\leq \frac{2\|\a- \b\|_2}{\sigma}.$
\end{proof}
\subsection{Proof of Lemma \ref{lem:4}}
\begin{proof}
	The conclusion is trivial if $\x\in\Omega_*$. Otherwise, the proof follows Proposition~\ref{prop_bolte}. In particular, if we define $\varphi(s)=cs^{\theta}$, then $D(\x, \Omega_*)\leq \varphi(f(\x) - f_*)$ for any $\x\in\{\x: 0<f(\x) - f_*\leq\xi\}$ and $\varphi$ satisfies  $s\varphi'(s)\geq \theta \varphi(s)$. By Proposition~\ref{prop_bolte}, we have 
	\begin{align*}
	\varphi'(f(\x) - f_*)\|\partial f(\x)\|_2\geq \theta,
	\end{align*}
	i.e., 
	\begin{align}
	\label{KL}
	c\|\partial f(\x)\|_2 \geq (f(\x) - f_*)^{1-\theta}.
	\end{align}
	When $\theta=1$, we have $\|\partial f(\x)\|_2\geq 1/c$ for $\x\not\in\Omega_*$. 
	As a result, when $\theta\in(0, 1)$
	\begin{align*}
	D(\x, \Omega_*)\leq c(f(\x) - f_*)^{\theta}\leq c^{\frac{1}{1-\theta}}\|\partial f(\x)\|_2^{\frac{\theta}{1-\theta}},
	\end{align*}
	and when $\theta=1$
	\begin{align*}
	D(\x, \Omega_*)\leq c(f(\x) - f_*)\leq c^2\xi\|\partial f(\x)\|_2.
	\end{align*}
	
	\end{proof}
\bibliography{all}

\end{document}